\DeclarePairedDelimiter\floor{\lfloor}{\rfloor}
\newtheorem{theorem}{Theorem}[section]
\newtheorem{lemma}[theorem]{Lemma}
\newtheorem{proposition}[theorem]{Proposition}
\theoremstyle{definition}
\theoremstyle{remark}
\newtheorem{remark}[theorem]{Remark}
\numberwithin{equation}{section}
\newcommand{\lcr}{\raisebox{-5pt}{\mbox{}\hspace{1pt}
                 \includegraphics{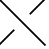}\hspace{1pt}\mbox{}}}
\newcommand{\ift}{\raisebox{-5pt}{\mbox{}\hspace{1pt}
                 \includegraphics{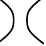}\hspace{1pt}\mbox{}}}
\newcommand{\zer}{\raisebox{-5pt}{\mbox{}\hspace{1pt}
                 \includegraphics{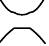}\hspace{1pt}\mbox{}}}
\newcommand\scalemath[2]{\scalebox{#1}{\mbox{\ensuremath{\displaystyle #2}}}}
\title{Frobenius Algebras derived from the Kauffman bracket skein algebra}
\author{Nel Abdiel, Charles Frohman}
\begin{document}

\begin{abstract}

In this paper we study the skein modules of the surfaces, $\Sigma_{i,j}$ $(i,j)\in \{(0,2),(0,3),(1,0),(1,1)\}$ at $2N$-th roots of unity where $N\geq 3$ is an odd counting number and construct Frobenius algebras from them.\end{abstract}

\maketitle

\section{Introduction} A {\bf Frobenius algebra} $A$ over a field $k$ is an algebra equipped with a linear functional $\epsilon:A\rightarrow k$ that has no nontrivial principal  ideals in its kernel \cite{K}.  The Frobenius algebra is in addition {\bf symmetric} if for all $a,b\in A$, $\epsilon(ab)=\epsilon(ba)$. Frobenius algebras are central to the study of quantum field theories, and hence quantum invariants. Specifically, we are studying this phenomenon with an eye towards constructing a $4$-dimensional TQFT underlying the Kashaev invariant.
In this paper we explore symmetric Frobenius algebras derived from the Kauffman bracket skein algebra of some simple surfaces.

We explore two approaches to constructing algebras. The Kauffman bracket skein algebra $K_N(F)$ of a compact oriented surface where $A=e^{\pi {\bf i}/N}$, with $N\geq 3$ an odd integer, is a ring extension of the $SL_2\mathbb{C}$-characters $\chi(F)$ of the fundamental group of the surface. The first approach is to extend coefficients to the function field of the character variety to get an algebra $S^{-1}K_N(F)$ over the function field.  The second approach is given a {\bf place} $\phi:\chi(F)\rightarrow \mathbb{C}$, that is a homomorphism to the complex numbers, we specialize to get an algebra, $K_N(F)_{\phi}$, over the complex numbers.  In most cases, the derived algebra, equipped with the most obvious linear functional is a symmetric Frobenius algebra.
Frobenius algebras are finite dimensional. Identifying a finite generating set over the character ring is central to all our constructions.

The trace that makes the algebra into a symmetric Frobenius algebra comes from letting the algebra act on itself by left multiplication and then taking the normalized trace of the resulting linear map. The construction is agnostic with respect to the meaning of the trace.
A more conceptual approach to constructing the trace is being explored in \cite{FK}. The ethos of that approach is that the trace comes from understanding the skein module of a connected sum of copies of $S^1\times S^2$.

Throughout this paper,  the notation $\Sigma_{g,b}$ is  used to denote the compact oriented surface of genus $g$ with $b$ boundary components.  After a section broaching the background we study the construction applied to the Kauffman bracket skein algebras of $\Sigma_{0,2}$, $\Sigma_{0,3}$, $\Sigma_{1,0}$ and 
$\Sigma_{1,1}$.

\section{Background}

A framed link in a three-manifold $M$ is a disjoint union of oriented annuli in $M$.  The orientation of the annuli can be expressed by saying that each annulus has a preferred side.  Let $\mathcal{L}_M$ be the set of all equivalence classes of such annuli up to isotopy including the empty link.  Let $\mathbb{C}\mathcal{L}_M$ be the vector space with basis $\mathcal{L}_M$.  Choose an odd counting number $N$ and let $A=e^{\pi{\bf i}/N}$.  Finally $K_N(M)$ is the quotient of $\mathbb{C}\mathcal{L}_M$ by the submodule generated by the Kauffman bracket skein relations,  

\[\lcr-A\zer-A^{-1}\ift \]
and
\begin{equation*}\bigcirc \cup L+(A^2+A^{-2})L,\end{equation*} 
 where the framed links in 
each expression are
identical outside the balls pictured in the diagrams.  You should think of the
framed links in the ball as parallel to the arcs shown, with the preferred side up.

A quotient module is made up of cosets of the submodule you are modding out by.  The relators above are more familiarly written as relations between cosets.  In that form they look like,

\begin{equation}\displaystyle{\lcr=A\zer+A^{-1}\ift}\end{equation} 
and
\begin{equation}\bigcirc \cup L=-(A^2+A^{-2})L.\end{equation}

If $M=F\times [0,1]$ where $F$ is  an oriented surface,  then $K_N(M)$ is an algebra under stacking.  If $\alpha, \beta \in K_N(F\times [0,1])$ then $\alpha*\beta$ is the skein obtained by stacking $\alpha$ over $\beta$.  To emphasize that the algebra structure depends on how you realize $F\times [0,1]$ as a cylinder over the surface $F$, we denote it $K_N(F)$.  The planar surface with three boundary components $\Sigma_{0,3}$ and the surface of genus one with one boundary component $\Sigma_{1,1}$ are not homeomorphic, but $\Sigma_{0,3}\times [0,1]\cong \Sigma_{1,1}\times [0,1]$.
Meaning different skein algebras have the same underlying skein module.

A simple diagram $D$ on a surface $F$ is a system of disjoint simple closed curves so that none of the curves bounds a disk.  We include the empty diagram in this list.  By taking an annulus in $F$ about each simple closed curve 
in a diagram $D$ and orienting them to coincide with the orientation of $F$
we get a framed link corresponding to each simple diagram. Sometimes this is called the {\em blackboard framing}.
 No matter what  $A\in \mathbb{C}-\{0\}$ is used in the Kauffman bracket, the isotopy classes of framed links coming from isotopy classes of simple diagrams form a basis for
 the Kauffman bracket skein algebra, \cite{HP,BFK,SW}

The Tchebychev Polynomials of the first type  $T_k$ are defined recursively by
\begin{itemize}
\item $T_0(x) =2$, \item $T_1(x)= x$ and \item $T_{n+1}(x) = T_1(x) \cdot T_n(x) - T_{n-1}(x)$. \end{itemize}

 An easy inductive argument confirms that if $q\in \mathbb{C}-\{0\}$ then $T_k(q+q^{-1})=q^k+q^{-k}$.   This last formula can be taken as a characterization among polynomials of the $k$th Tchebychev polynomial of the first type.

\begin{lemma} If $p(z)$ is a polynomial so that for all $q\in \mathbb{C}-\{0\}$, $p(q+q^{-1})=q^k+q^{-k}$ then $p(z)=T_k(z)$. \end{lemma}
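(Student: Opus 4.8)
The plan is to reduce the claim to the elementary fact that a nonzero polynomial over $\mathbb{C}$ has only finitely many roots. First I would form the difference $d(z) = p(z) - T_k(z)$, which is again a polynomial. By the characterization recalled just before the statement, $T_k(q + q^{-1}) = q^k + q^{-k}$ for every $q \in \mathbb{C} - \{0\}$, and by hypothesis $p$ satisfies the same identity. Subtracting these, I obtain $d(q + q^{-1}) = 0$ for all $q \in \mathbb{C} - \{0\}$.

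The next step is to observe that the substitution $q \mapsto q + q^{-1}$ sweeps out infinitely many values, so that $d$ vanishes on an infinite set. Concretely, given any $z \in \mathbb{C}$, the equation $q + q^{-1} = z$ is equivalent to the quadratic $q^2 - zq + 1 = 0$, whose two roots have product equal to $1$ and are therefore both nonzero; hence every complex number is of the form $q + q^{-1}$ for some $q \in \mathbb{C} - \{0\}$. Consequently $d$ vanishes identically on $\mathbb{C}$. (For the argument one does not even need full surjectivity: it suffices that $q + q^{-1}$ takes infinitely many distinct values, which is already clear as $q$ ranges over the positive reals.)

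Finally, a one-variable polynomial over the field $\mathbb{C}$ with infinitely many roots must be the zero polynomial. Therefore $d \equiv 0$, that is, $p(z) = T_k(z)$, which is the desired conclusion.

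I do not expect a genuine obstacle here. The only point that requires a moment's thought is confirming that $z = q + q^{-1}$ has an infinite (indeed, all of $\mathbb{C}$) image, and even that is immediate once one rewrites the relation as the monic quadratic $q^2 - zq + 1 = 0$ with unit constant term, which forces its roots to be nonzero. The remainder is just the standard identity principle for polynomials.
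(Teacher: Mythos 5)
Your proof is correct, and it takes a slightly different (and more elementary) route than the paper. The paper disposes of the lemma in one line by invoking the identity theorem for analytic functions: $p$ and $T_k$, viewed as entire functions, agree on a set with an accumulation point, hence are equal. You instead work purely algebraically: you form $d(z)=p(z)-T_k(z)$, show $d$ vanishes on the image of $q\mapsto q+q^{-1}$, prove that this image is all of $\mathbb{C}$ (via the quadratic $q^2-zq+1=0$, whose roots are nonzero because their product is $1$), and conclude $d\equiv 0$ since a nonzero polynomial has only finitely many roots. Your version buys two things: it avoids complex analysis entirely, so the argument works over any infinite field once ``all of $\mathbb{C}$'' is weakened to ``infinitely many values''; and it makes explicit the step the paper leaves implicit, namely why the set on which the two functions agree is large enough (the paper never actually identifies the set or its accumulation point). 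The paper's version is shorter and generalizes immediately to situations where the objects being compared are merely analytic rather than polynomial, but for this lemma your argument is the more self-contained one.
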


\proof If two analytic functions on $\mathbb{C}-\{0\}$ agree on a set with an accumulation point then they are equal. \qed

\begin{proposition}   \label{mult} 
For $m,n>0$, $T_m(T_n(x))=T_{mn}(x)$. Furthermore, for all $m,n\geq 0$,  $T_m(x)T_n(x)=T_{m+n}(x)+T_{|m-n|}(x)$.

\end{proposition}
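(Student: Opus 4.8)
The plan is to reduce both identities to the substitution $x=q+q^{-1}$ and then invoke the characterization of the Tchebychev polynomials recorded in the preceding lemma (or, equivalently, the elementary fact that two polynomials agreeing on an infinite set must coincide). The key observation is that the map $q\mapsto q+q^{-1}$ sends $\mathbb{C}-\{0\}$ onto all of $\mathbb{C}$, since for any $z$ the quadratic $q^2-zq+1=0$ has a nonzero root; in particular its image has an accumulation point, so any polynomial identity verified on this image holds identically.

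For the composition $T_m(T_n(x))=T_{mn}(x)$, I would set $p(z)=T_m(T_n(z))$, which is again a polynomial in $z$, and evaluate at $z=q+q^{-1}$. Using $T_n(q+q^{-1})=q^n+q^{-n}$ and writing $r=q^n$, so that $T_n(q+q^{-1})=r+r^{-1}$, the formula $T_m(r+r^{-1})=r^m+r^{-m}$ gives $p(q+q^{-1})=q^{mn}+q^{-mn}$. Hence $p$ satisfies the hypothesis of the lemma with $k=mn$, and therefore $p(z)=T_{mn}(z)$.

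For the product $T_m(x)T_n(x)=T_{m+n}(x)+T_{|m-n|}(x)$, I would again substitute $x=q+q^{-1}$ and expand:
\[
(q^m+q^{-m})(q^n+q^{-n})=\bigl(q^{m+n}+q^{-(m+n)}\bigr)+\bigl(q^{m-n}+q^{-(m-n)}\bigr).
\]
The first grouped term is $T_{m+n}(q+q^{-1})$ and the second is $T_{|m-n|}(q+q^{-1})$, the absolute value being harmless since $q^{m-n}+q^{-(m-n)}=q^{|m-n|}+q^{-|m-n|}$; when $m=n$ this collapses to $2=T_0$, consistent with the definition $T_0=2$. As the two sides are polynomials in $x$ agreeing on the image of $q\mapsto q+q^{-1}$, they are equal.

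There is no genuinely hard step here: both statements become transparent once one passes through the $q+q^{-1}$ substitution. The only points demanding a little care are confirming that evaluation on the set $\{q+q^{-1}\}$ suffices to force a polynomial identity, which is handled by the surjectivity and accumulation-point remark above exactly as in the lemma, and the bookkeeping of the edge cases in the product formula, where for $m=n$ (or for $n=0$) the $T_{|m-n|}$ term must be read as the constant $T_0=2$.
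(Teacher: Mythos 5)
Your proof is correct and follows essentially the same route as the paper: both identities are verified on values $x=q+q^{-1}$ and then promoted to polynomial identities via the characterization lemma for $T_k$. Your write-up is slightly more careful (the substitution $r=q^n$, the edge case $m=n$ giving $T_0=2$), but it is the same argument.
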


\proof  The first formula follows from the observation  that $T_m(T_n)$ is a polynomial, and 
\[T_m(T_n(q+q^{-1}))=T_n(q^n+q^{-n})=q^{mn}+q^{-mn}.\] 
The second comes from seeing \[(q^m+q^{-m})(q^n+q^{-n})=(q^{m+n}+q^{-m-n})+(q^{|m-n|}+q^{-|m-n|}),\] and using the characterization of the Tchebychev polynomials of degree $k$ among polynomial functions.\qed

\vline

\begin{theorem}
(Bonahon-Wong \cite{C}). If  $M$ is a compact oriented three-manifold and $A= e^{\pi{\bf i}/N}$, there is a $\mathbb{C}$-linear map
\begin{center}
$\tau: K_{1}(M) \rightarrow K_{N}(M)$
\end{center}

\vline
\\
given by threading framed links with $T_N$. Any framed link in the image of $\tau$ is central in the sense that if $L' \cup K$ differs from $L \cup K$ by a crossing change of $L$ and $L'$ with $K$, then $T_N(L) \cup K = T_N(L') \cup K$. In the case that $M = F \times [0,1]$, the map

\vline

\begin{center}
$\tau: K_{1}(F) \rightarrow K_N(F)$
\end{center}
is an injective homomorphism of algebras.

\end{theorem}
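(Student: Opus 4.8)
The plan is to build $\tau$ from the cabling operation and to isolate a single local computation---the transparency of a $T_N$-cabled strand---as the engine behind every assertion. Concretely, for a framed knot $K$ let $T_N(K)$ denote the skein in a tubular neighborhood of $K$ obtained by inserting $T_N(z)$, where $z$ is the core of the annulus $K\times[0,1]$ and $z^k$ means $k$ parallel copies; I would extend this to framed links by cabling each component, and to all of $\mathbb{C}\mathcal{L}_M$ linearly. Since $\tau$ is defined on the spanning set of framed links, linearity is automatic, and the real work is to show that this assignment descends to the quotient $K_1(M)$ and has the stated properties.

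The crux is the transparency (centrality) statement, which I would prove first. It suffices to show that a single strand may be pushed across $T_N(K)$, i.e. that the positive and negative crossings of a strand with the cable give equal skeins; this is exactly the asserted invariance $T_N(L)\cup K=T_N(L')\cup K$ under crossing changes. Localizing in a square through which the $T_N$-cable of a horizontal core runs, I would fully resolve both the over- and the under-crossing via the Kauffman bracket skein relation into Temperley--Lieb diagrams, using Proposition \ref{mult} to reorganize the resulting powers of the core. The two resolutions differ by a sum of turn-back (cap--cup) terms weighted by powers of $A^2$; because $A^2=e^{2\pi{\bf i}/N}$ is a primitive $N$-th root of unity and we have threaded precisely by $T_N$, these weighted sums vanish by a discrete-Fourier/Gauss-sum cancellation, leaving the two diagonal terms equal. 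This is the Frobenius-type phenomenon $z\mapsto z^N$ at the root of unity, and I expect it to be the main obstacle; everything else is bookkeeping on top of it.

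Granting transparency, well-definedness over $K_1(M)$ follows. At $A=-1$ the two smoothings of a crossing coincide, so the relations of $K_1$ are generated by the trivial-loop relation together with one smoothing relation. For the trivial loop, threading a $0$-framed unknot produces the scalar $T_N(d)$ with $d=-A^2-A^{-2}=(-A^2)+(-A^2)^{-1}$; the characterization $T_N(q+q^{-1})=q^N+q^{-N}$ with $q=-A^2$ gives $(-1)^N(A^{2N}+A^{-2N})=-2$, since $N$ is odd and $A^{2N}=1$, matching the value $-(A^2+A^{-2})=-2$ in $K_1$. The smoothing relation is handled by the same local resolution as in the transparency computation, with centrality allowing the two cabled strands to be treated independently so that the Kauffman resolution at $A=e^{\pi{\bf i}/N}$ reproduces the threaded $A=-1$ relation.

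For $M=F\times[0,1]$ it remains to prove that $\tau$ is a homomorphism and is injective. Multiplicativity is clear for disjoint curves, and for curves meeting in the cylinder the crossings created by stacking are rendered immaterial by centrality, so that $T_N(\alpha)*T_N(\beta)$ matches $\tau(\alpha\beta)$ on a generating set of simple closed curves; the attendant bookkeeping is the Chebyshev identities of Proposition \ref{mult}. Finally, injectivity follows from a leading-term argument in the simple-diagram basis: since $T_N(x)=x^N+(\text{lower-order terms})$, the skein $\tau(D)$ of a simple diagram $D$ expands with highest term $D^{(N)}$, the diagram obtained by replacing each curve with $N$ parallel copies, and leading coefficient $1$. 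The assignment $D\mapsto D^{(N)}$ is injective on simple diagrams, so with respect to a filtration by intersection number or length $\tau$ sends the basis to a triangular, hence linearly independent, family; this last step is routine.
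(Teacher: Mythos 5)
The paper itself offers no proof of this statement: it is quoted as a theorem of Bonahon--Wong \cite{C}, so your attempt has to be measured against what a complete proof actually requires. Several of your steps are sound: the definition of $\tau$ by cabling each component with $T_N$, the trivial-loop check (writing $-A^2-A^{-2}=q+q^{-1}$ with $q=-A^2$, so that $T_N$ of the unknot evaluates to $(-1)^N(A^{2N}+A^{-2N})=-2$), and the injectivity argument for surfaces (the leading term of $\tau(D)$ is the $N$-fold parallel $D^{(N)}$, and since $T_N$ has zero constant term no other simple diagram can contribute to that basis element). Incidentally, for $M=F\times[0,1]$ multiplicativity needs no centrality at all: stacking two links and then threading is literally the same cabled link as threading and then stacking, so that step is easier than you make it. The genuine problem is that the two load-bearing steps are asserted rather than proved. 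Transparency \emph{is} the theorem --- it is exactly the ``miraculous cancellation'' of the title of \cite{C} --- and your proposed mechanism (resolve the crossings into Temperley--Lieb diagrams; the turn-back terms, weighted by powers of $A^2$, die by a ``discrete-Fourier/Gauss-sum cancellation'') is a hope, not an argument. For a single power $z^k$ the difference between over- and under-crossing is a sum of turn-back diagrams with quantum-integer coefficients that do \emph{not} vanish; the cancellation occurs only after summing against the precise Chebyshev coefficients of $T_N$, and exhibiting it requires a real induction or generating-function computation. Nothing in your sketch explains why $T_N$, rather than any other monic degree-$N$ polynomial, makes the sum collapse.

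The second gap is worse, because it is a faulty reduction rather than a missing computation. You claim that descent to $K_1(M)$ follows from transparency because ``centrality allows the two cabled strands to be treated independently.'' It does not. The $A=-1$ relation $L_\times+L_0+L_\infty=0$ involves smoothings that merge or split components, and threading does not commute with smoothing: the $2^{N^2}$ resolutions of an $N\times N$ block of cable crossings are mostly not cables of anything, so there is no formal way to deduce $\tau(L_\times)+\tau(L_0)+\tau(L_\infty)=0$ from the ability to change crossings. What must actually be proved is that threaded links satisfy the $A=-1$ Kauffman relation inside $K_N(M)$. Localizing the relation in a regular neighborhood of the two strands together with the crossing ball --- a genus-two handlebody, i.e. $\Sigma_{1,1}\times[0,1]$ --- this becomes the statement that the product-to-sum formula for threaded curves is \emph{exact} at this root of unity, for instance
\[ (N,0)_T\ast(0,N)_T=-(N,N)_T-(N,-N)_T \quad\text{in } K_N(\Sigma_{1,1}).\]
In the closed torus this identity is immediate from the product-to-sum formula, since $A^{N^2}=(-1)^N=-1$; but in $\Sigma_{1,1}$, where the local check actually lives, the product-to-sum formula of \cite{E} carries error terms lying in the ideal $(\eta)$, and one must prove that all of them vanish after threading by $T_N$. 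That vanishing is a second miraculous cancellation, of the same nature as transparency but not a consequence of it, and your proposal never confronts it. Until both cancellations are carried out, the proposal is a correct plan of attack with the theorem's entire content still missing.
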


It is easy to see that $K_1(M)$ is a ring under disjoint union. This is because at $A=-1$, the Kauffman bracket skein relation says that you can change crossings in a crossing ball of a framed link without changing the equivalence class of the induced skein. To take the product of two equivalence classes of framed links, choose representatives that are disjoint from one another and take their union. This can be extended to give a product on $K_1(M)$. Let $\sqrt{0}$ denote the nilradical of $K_1(M)$.  It is a theorem of Bullock, \cite{D}, that for any oriented compact $3$-manifold $K_1(M)/\sqrt{0}$ is canonically isomorphic to the coordinate ring of the $SL_2\mathbb{C}$-character variety of the fundamental group of $M$.  In the case that $M=F\times [0,1]$ the disjoint union product coincides with the stacking product. It is a theorem of Przytycki and Sikora \cite{PS} that in this case $\sqrt{0}=\{0\}$,  so $K_1(F)$ is the coordinate ring of the $SL_2\mathbb{C}$-character variety. To alleviate notational ambiguity, and to emphasize this fact, we always denote the image of the threading map by $\chi(M)$.

Next we need to broach some algebra. There are two methods by which we will derive an algebra over a field from the Kauffman bracket skein algebra. The first is by extending the base ring to its field of fractions, and the second is specializing at a place.

 Let $R\rightarrow A$ be a central ring extension, where is $R$ is an integral domain, $A$ is an associative ring with unit
and the inclusion of $R$ into $A$ is a ring homomorphism.  Since $R$ has no zero divisors, $S=R-\{0\}$ is multiplicatively closed.   We start with the set of ordered pairs $A\times S$, and we place an equivalence relation on $A\times S$ by $(a,s)$ is equivalent to $(b,t)$ if $at=bs$.  Denote the equivalence class of $(a,s)$ under this relation by $[a,s]$.  The set of equivalence classes is denoted $S^{-1}A$, and called the field of fractions of $S$. We denote the equivalence classes $[a,s]$ where $a\in R$ by $S^{-1}R$. We define multiplication of equivalence classes by $[a,s][b,t]=[ab,st]$ and addition by $[a,s]+[b,t]=[at+bs,st]$.  Under these operations $S^{-1}R$ is a field, and $S^{-1}A$ is an algebra over that field.

In the cases we are interested, the algebra is $K_N(F)$ and $R$ is $\chi(F)$.  This means that $S^{-1}R$ is the function field of the character variety of $\pi_1(F)$.

Next assume that $A$ is a finite extension of $R$. Thus, there exist $a_1,\ldots a_n$ so that every element of $A$ can be written as an $R$-linear combination of the $a_i$. The localization $S^{-1}A$  is a finite dimensional vector space over $S^{-1}R$.  In the passage to the field of quotients linear dependencies between the $a_i$ can be introduced, however some subset  $v_1,\ldots, v_k$ of the original generators is a basis.  Using this basis we get an algebra homomorphism,
\[ \rho:S^{-1}A\rightarrow M_{k,k}(S^{-1}R)\]
where $M_{k,k}(S^{-1}R)$ is the algebra of $k\times k$-matrices with coefficients in $S^{-1}R$.  The matrix $(a^j_i)$ associated to $\alpha\in S^{-1}A$ comes from writing  \[\alpha v_i=\sum_j\alpha^j_iv_j\]
for each $i$.    Finally let $Tr:S^{-1}A\rightarrow S^{-1}R$  be given by  $Tr(\alpha)=\frac{1}{k}\sum_ia^i_i$. We call the trace, scaled so that $Tr(1)=1$, the {\bf normalized trace}.
Notice that

\begin{proposition} \

\begin{itemize}
\item $Tr:S^{-1}A\rightarrow S^{-1}R$ is independent of the choice of basis.
\item $Tr(1)=1$.
\item $Tr$ is $S^{-1}R$ linear.
\item $Tr(\alpha\beta)=Tr(\beta\alpha)$.
\end{itemize}
\end{proposition}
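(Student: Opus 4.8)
The plan is to realize $Tr$ as $1/k$ times the ordinary trace of an intrinsically defined linear operator, after which every item reduces to a standard fact of linear algebra. For $\alpha\in S^{-1}A$ let $L_\alpha:S^{-1}A\rightarrow S^{-1}A$ denote left multiplication, $L_\alpha(x)=\alpha x$. Because $R\rightarrow A$ is a central extension, $S^{-1}R$ sits centrally in $S^{-1}A$, so $L_\alpha(rx)=\alpha(rx)=r(\alpha x)=rL_\alpha(x)$ for $r\in S^{-1}R$; thus each $L_\alpha$ is an $S^{-1}R$-linear endomorphism of the $k$-dimensional space $S^{-1}A$. With respect to the chosen basis $v_1,\ldots,v_k$ the matrix of $L_\alpha$ is exactly $(\alpha^j_i)=\rho(\alpha)$, so $Tr(\alpha)=\frac{1}{k}\mathrm{tr}\,\rho(\alpha)=\frac{1}{k}\mathrm{tr}\,L_\alpha$, the last being the trace of a linear operator.

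First I would verify that $\alpha\mapsto L_\alpha$ is a unital $S^{-1}R$-algebra homomorphism into $\mathrm{End}_{S^{-1}R}(S^{-1}A)$: $L_1=\mathrm{id}$, $L_{\alpha+\beta}=L_\alpha+L_\beta$, $L_{r\alpha}=rL_\alpha$, and $L_{\alpha\beta}=L_\alpha\circ L_\beta$ by associativity of the product in $A$. This is precisely the content that $\rho$ is an algebra homomorphism, recast operator-theoretically. Granting it, the four claims follow at once. Basis independence is the statement that the trace of an endomorphism of a finite-dimensional vector space does not depend on the basis in which its matrix is written: if $w_1,\ldots,w_k$ is another basis and $P\in GL_k(S^{-1}R)$ is the change-of-basis matrix, then the matrix of $L_\alpha$ becomes $P^{-1}\rho(\alpha)P$, and $\mathrm{tr}(P^{-1}\rho(\alpha)P)=\mathrm{tr}\,\rho(\alpha)$ by cyclic invariance of the matrix trace; one also notes that $k=\dim_{S^{-1}R}S^{-1}A$ is itself a basis-independent invariant, so the normalizing factor is the same for every basis. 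The normalization gives $Tr(1)=\frac{1}{k}\mathrm{tr}(\mathrm{id})=\frac{1}{k}\cdot k=1$. Linearity of $Tr$ is inherited from linearity of $\alpha\mapsto L_\alpha$ together with linearity of the matrix trace. Finally $Tr(\alpha\beta)=\frac{1}{k}\mathrm{tr}(L_\alpha L_\beta)=\frac{1}{k}\mathrm{tr}(L_\beta L_\alpha)=Tr(\beta\alpha)$, again by $\mathrm{tr}(MN)=\mathrm{tr}(NM)$.

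No step is a serious obstacle; the only points needing a word of care are the two that invoke centrality of $R$ in $A$, namely that $L_\alpha$ is genuinely $S^{-1}R$-linear, so that ``trace of an operator over $S^{-1}R$'' is meaningful, and that scalars pull out of $Tr$, together with the remark that $k$ is basis-independent. The cyclic identity $\mathrm{tr}(MN)=\mathrm{tr}(NM)$ is the real engine: it delivers basis independence and the symmetry $Tr(\alpha\beta)=Tr(\beta\alpha)$ simultaneously, the latter being exactly what later makes the pairing $(\alpha,\beta)\mapsto Tr(\alpha\beta)$ symmetric in the Frobenius-algebra application.
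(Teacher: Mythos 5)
Your proof is correct and follows exactly the route the paper takes: the paper's entire proof is the one-line remark that everything ``follows from elementary properties of the trace on $k\times k$-matrices,'' and your argument simply supplies those details (realizing $Tr$ as the normalized trace of the left-multiplication operator, checking $\alpha\mapsto L_\alpha$ is an $S^{-1}R$-algebra homomorphism, and invoking linearity and cyclic invariance of the matrix trace). Nothing to add.
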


\proof These all follow from elementary properties of the trace on $k\times k$-matrices. \qed

An $S^{-1}R$-linear map $Tr:S^{-1}A\rightarrow S^{-1}R$ is  {\em nondegenerate}, if for every $\alpha\neq 0 \in S^{-1}A$,  there is $\beta \in S^{-1}A$ with $Tr(\alpha\beta)\neq 0$. It is easy to show that this is equivalent to the statement that there are no nontrivial principal ideals contained in the kernel of $Tr$. If $Tr:S^{-1}A\rightarrow S^{-1}R$ is nondegenerate, then $S^{-1}A$ is a symmetric Frobenius algebra.  This is the path we will take in this paper towards proving that the localized Kauffman bracket skein algebra of $\Sigma_{i,j}$ where $(i,j)\in \{(0,2),(0,3),(1,0),(1,1)\}$ is a symmetric Frobenius algebra over the function field of the $SL_2\mathbb{C}$-character variety of the underlying surface.

The second construction of a Frobenius algebra is via specialization.
 In the cases we are interested in the threading map is an injection, and $\chi(M)$ is isomorphic to the coordinate ring of the $SL_2\mathbb{C}$-character variety of the fundamental group of $M$.  Two representations $\rho_1,\rho_2:G\rightarrow SL_2\mathbb{C}$ are {\bf trace equivalent} if for every $g\in G$, $tr(\rho_1(g))=tr(\rho_2(g))$, where $tr:SL_2\mathbb{C}\rightarrow \mathbb{C}$ is the trace.
 There is a one to one correspondence between ring homomorphisms $\phi:\chi(M)\rightarrow \mathbb{C}$ and trace equivalence classes of representations of the fundamental group of $M$ into $SL_2\mathbb{C}$.

Given $\phi:\chi(F)\rightarrow \mathbb{C}$ the set $I_{\phi}=(\ker{\phi })K_N(F)$ consisting of all sums where the terms are the product of a skein in $\ker{\phi}$ with an arbitrary element of $K_N(F)$, is a two sided ideal by dint of the fact that $\chi(F)$ is central, and $(\ker{\phi})\leq \chi(F)$ is an ideal. Hence we can form the quotient $K_N(F)_{\phi}=K_N(F)/I_{\phi}$ which is an algebra over the complex numbers. Once again, if $K_N(F)$ is finite dimensional over $\chi(F)$, the algebra $K_N(F)_{\phi}$ has a finite basis. Treating elements of $K_N(F)_{\phi}$ as operators by left multiplication we get 
\[ Tr_{\phi}:K_N(F)_{\phi}\rightarrow \mathbb{C},\] where the trace is normalized so that the identity map corresponds to $1$.  Most of the time $K_N(F)_{\phi}$ is a symmetric Frobenius algebra. This means there is a proper subvariety of the character variety of the fundamental group of $F$, so that if $\phi$ is not in that subvariety, then $K_N(F)_{\phi}$ is a symmetric Frobenius algebra.

\section{The Annulus }

The skein algebra of $K_N(\Sigma_{0,2})$ is naturally isomorphic to polynomials in one variable $\mathbb{C}[x]$. Under this isomorphism the variable  $x$ is the image of the core of the annulus with the blackboard framing.

Since the polynomials $T_k(x)$ have $x^k$ as their leading terms, we  can change bases, so that $K_N(\Sigma_{0,2})$ has basis $T_k(x)$ over the complex numbers, and multiplication obeys the product to sum formula $T_k(x)T_m(x)=T_{k+m}(x)+T_{|k-m|}(x)$.  

\begin{proposition}The image of the threading map, $\chi(\Sigma_{0,2})$, is the span of all $T_{Na}(x)$ where $a$ ranges over all non-negative integers.\end{proposition}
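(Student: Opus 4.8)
The plan is to identify the image of $\tau$ with a singly generated subalgebra of $K_N(\Sigma_{0,2})\cong\mathbb{C}[x]$ and then to recognize that subalgebra as the claimed span using the Chebyshev identities of Proposition \ref{mult}.

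First I would pin down the threading map on generators. Since $\pi_1(\Sigma_{0,2})\cong\mathbb{Z}$, both $K_1(\Sigma_{0,2})$ and $K_N(\Sigma_{0,2})$ are isomorphic to $\mathbb{C}[x]$, with $x$ the core of the annulus, and $K_1(\Sigma_{0,2})$ is generated as an algebra by the single core $x$. By definition the threading map replaces a framed link by the result of threading each component with $T_N$; applied to the core this yields exactly $T_N(x)$, so $\tau(x)=T_N(x)$ as an element of $K_N(\Sigma_{0,2})$.

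Second I would invoke the theorem of Bonahon--Wong quoted above: in the case $M=F\times[0,1]$ the map $\tau$ is a homomorphism of algebras. Because $K_1(\Sigma_{0,2})=\mathbb{C}[x]$ is generated by $x$, the image $\chi(\Sigma_{0,2})=\tau(\mathbb{C}[x])$ is precisely the subalgebra of $K_N(\Sigma_{0,2})$ generated by $\tau(x)=T_N(x)$; concretely $\tau(p(x))=p(T_N(x))$ for every polynomial $p$, so $\chi(\Sigma_{0,2})=\mathbb{C}[T_N(x)]$. It then remains to show $\mathbb{C}[T_N(x)]=\mathrm{span}\{T_{Na}(x):a\geq 0\}$. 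The inclusion $\supseteq$ is immediate from Proposition \ref{mult}, since $T_{Na}(x)=T_a(T_N(x))\in\mathbb{C}[T_N(x)]$. For the reverse inclusion, $\mathbb{C}[T_N(x)]$ is spanned by the powers $T_N(x)^k$; writing $y=T_N(x)$ and using that $T_a(y)=y^a+(\text{lower order in }y)$, the family $\{T_a(y)\}_{a\geq 0}=\{T_{Na}(x)\}_{a\geq 0}$ is a triangular change of basis from $\{y^k\}_{k\geq 0}$, so the two families span the same subspace. This gives equality.

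The main obstacle --- really the only delicate point --- is the first step: correctly reading off the geometric meaning of the threading map on the annulus and confirming that threading the core with $T_N$ produces exactly $T_N(x)$ in $K_N(\Sigma_{0,2})$, rather than some rescaled or reindexed skein, together with the observation that the algebra-homomorphism property is what lets one pass from the single generator to all of $\mathbb{C}[x]$. Once $\tau(x)=T_N(x)$ is established, everything else is the purely formal Chebyshev bookkeeping supplied by Proposition \ref{mult}.
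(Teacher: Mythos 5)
Your proof is correct and follows essentially the same route as the paper: both rest on the Bonahon--Wong algebra-homomorphism property of $\tau$ (with $\tau(x)=T_N(x)$) together with the identity $T_a(T_N(x))=T_{Na}(x)$ from Proposition \ref{mult}. The paper is just more compact---it applies $\tau$ directly to the Chebyshev basis $\{T_k(x)\}$ of $K_1(\Sigma_{0,2})$, obtaining $\tau(T_k(x))=T_{Nk}(x)$ so that the span is read off immediately, which avoids your extra triangular change-of-basis step between $\{T_N(x)^k\}$ and $\{T_{Na}(x)\}$.
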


\proof This follows from Proposition \ref{mult}, since $\tau(T_k(x))=T_{Nk}(x)$.\qed

\begin{proposition} $K_N(\Sigma_{0,2})$ is a free module of rank $N$ over $\chi(\Sigma_{0,2})$ with basis $T_k(x)$ where $k$ ranges from $0$ to $N-1$.\end{proposition}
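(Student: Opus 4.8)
The plan is to verify the two defining properties of a free basis separately: that the skeins $T_0(x),\dots,T_{N-1}(x)$ span $K_N(\Sigma_{0,2})$ as a module over $\chi(\Sigma_{0,2})$, and that they are linearly independent over $\chi(\Sigma_{0,2})$. In both arguments the only computational input is the product-to-sum formula $T_m T_n = T_{m+n}+T_{|m-n|}$ from Proposition~\ref{mult}, together with the identification of $K_N(\Sigma_{0,2})$ with $\mathbb{C}[x]$ having $\mathbb{C}$-basis $\{T_k(x)\}_{k\ge 0}$, the description of $\chi(\Sigma_{0,2})$ as the span of the $T_{Na}(x)$, and the fact that $T_0=2$, so that scalars (hence $1$) already lie in $\chi(\Sigma_{0,2})$.

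For spanning I would argue by strong induction on $k$ that each $T_k(x)$ is a $\chi(\Sigma_{0,2})$-linear combination of $T_0,\dots,T_{N-1}$. The cases $0\le k\le N-1$ are immediate. For $k\ge N$ write $k=Na+r$ with $a\ge 1$ and $0\le r\le N-1$ by the division algorithm. If $r=0$ then $T_k=T_{Na}=\tfrac12 T_{Na}T_0$, a $\chi(\Sigma_{0,2})$-multiple of $T_0$. If $r\ge 1$, then since $Na\ge N>r$ the product-to-sum formula gives $T_{Na}T_r=T_{Na+r}+T_{Na-r}$, so that $T_k=T_{Na}T_r-T_{Na-r}$; here $T_{Na}\in\chi(\Sigma_{0,2})$, the factor $T_r$ is one of the proposed basis elements, and $0\le Na-r<k$, so $T_{Na-r}$ is covered by the inductive hypothesis. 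This expresses $T_k$ in the required form.

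For linear independence, suppose $\sum_{k=0}^{N-1}c_k T_k=0$ with $c_k\in\chi(\Sigma_{0,2})$, and work with degrees in $\mathbb{C}[x]$. Every nonzero element of $\chi(\Sigma_{0,2})$ is a combination of the $T_{Na}$, which have the distinct degrees $Na$, so each nonzero $c_k$ has $\deg c_k$ divisible by $N$. Since $\deg T_k=k$, every nonzero term satisfies $\deg(c_k T_k)\equiv k\pmod N$. As $k$ ranges over the distinct residues $0,\dots,N-1$, the nonzero terms have pairwise distinct degrees, so no cancellation of leading terms can occur and the sum is nonzero unless every $c_k=0$. Combined with the spanning step this establishes freeness of rank $N$.

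I expect the spanning induction to be the step demanding the most care: one must check that the index $Na-r$ produced by the formula is genuinely smaller than $k$ and nonnegative, so that the inductive hypothesis applies and the absolute value in the product-to-sum formula creates no difficulty. The delicate point is precisely the boundary case $r=0$, where $Na-r=k$ rather than $Na-r<k$, and this case must be disposed of directly via $T_{Na}\in\chi(\Sigma_{0,2})\cdot T_0$ as above. By contrast, linear independence is essentially automatic once one records that every nonzero element of $\chi(\Sigma_{0,2})$ has degree divisible by $N$.
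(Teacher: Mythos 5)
Your proof is correct and follows essentially the same route as the paper: spanning via the identity $T_{Na+r}=T_{Na}T_r-T_{Na-r}$ with induction on the index, and linear independence by observing that $\deg(\chi_k T_k)\equiv k\pmod N$, so leading terms in distinct residue classes cannot cancel. The only difference is cosmetic—you explicitly dispose of the boundary case $r=0$ via $T_{Na}=\tfrac12 T_{Na}T_0$, a case the paper's proof passes over silently by assuming $b>0$.
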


\proof

Suppose that $a$ is a non-negative integer and $0\leq b\leq N-1$. Solving the product to sum formula,  \[T_{aN+b}(x)=T_{aN}(x)T_b(x)-T_{|aN-b|}(x).\] If $a>0$ then  $aN>0$ and supposing that $b>0$, $aN-b<aN+b$. This means,
by induction on the largest $k$ so that $T_k(x)$ appears with nonzero coefficient in the skein, every element of $K_N(\Sigma_{0,2})$ can be written as a linear combination of $T_b(X)$ where the $b$ range from $0$ to $N-1$.   Therefore the proposed basis spans.

Suppose that \[\sum_{k=0}^{N-1}\chi_kT_k(x)=0,\] where the $\chi_k\in \chi(\Sigma_{1,0})$.
Rewrite each term using the basis $\{x^k\}$ for $K_N(\Sigma_{0,2})$. The leading term  of each summand as polynomials in $x$ must cancel. However the leading term of $\chi_kT_k(x)$ is of the form $\alpha_kx^{aN+k}$, where $\alpha_k$ is a complex number. The highest degree terms $\chi_kT_k(x)$ and $\chi_mT_m(x)$ where $m\neq k$ cannot cancel with each other.  Therefore all the leading terms of all $\chi_kT_k(x)$ are all zero. But this means all $\chi_k=0.$  Hence the $T_k(x)$ where $k$ ranges from $0$ to $N-1$ are linearly independent over $\chi(\Sigma_{0,2})$. \qed

If $\alpha\in K_N(\Sigma_{0,2})$ left multiplication by $\alpha$ defines a $\chi(\Sigma_{0,2})$-linear map,
\[L_{\alpha}:K_N(\Sigma_{0,2})\rightarrow K_N(\Sigma_{0,2}).\]

We can write $L_{\alpha}$ as an $N\times N$-matrix with coefficients in $\chi(\Sigma_{0,2})$.   For instance if $N=5$, and $\alpha=T_1(x)$ then,
the matrix is,

\[\begin{pmatrix} 0 & 1 & 0 & 0 & \frac{1}{2}T_5(x) \\ 2 & 0 & 1 & 0 & 0 \\ 0 & 1 & 0 & 1 & 0 \\ 0 & 0 & 1 & 0 & 1 \\ 0 & 0 & 0 & 1 & 0 \end{pmatrix}.\]

\begin{remark} Notice the determinant of this matrix is $T_5(x)$.  In general the determinant of the matrix of $T_k(x)$ acting on $K_N(\Sigma_{0,2})$ as a free module over $\chi(\Sigma_{0,2})$ is $T_{kN}(x)$.\end{remark}

More generally, using Proposition \ref{mult}, the matrix $Lk$ for left multiplication by $T_k(x)$, where $k$ ranges from $1$ to $N-1$ and the indices for the matrix run from $0$ to $N-1$, is given by
 \begin{equation} \label{left} Lk_{i,j}=\begin{cases} 2\delta_i^k & j=0 \\ \delta^{|k-j|}_i+\delta^{k+j}_i & (k+j\leq N-1) \wedge (j\neq 0)
      \\
     \delta^{|k-j|}_i-\delta^{2N-k-j}_i +T_N\delta^{k+j-N}_i& (k+j>N-1) \wedge (j\neq 0)\wedge ( k+j-N\neq 0 )\\
     \delta^{|k-j|}_i-\delta^{2N-k-j}_i +\frac{1}{2}T_N\delta^{0}_i& (k+j>N-1)  \wedge (j\neq 0)\wedge ( k+j-N= 0)
   \end{cases}
\end{equation}

Also $T_k(x)T_0(x)=2T_k(x)$ from the definition of $T_0$.

\begin{proposition} If $Tr:K_N(F)\rightarrow \chi(F)$ is $\frac{1}{N}$ times the trace of an element acting by left translation we find that $Tr(T_k)=0$ unless $N|k$. Furthermore if $N|k$ then $Tr(T_k)=T_k$. \end{proposition}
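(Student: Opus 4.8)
The plan is to split on whether $N$ divides $k$ and to treat the harder case $N\nmid k$ by passing to the field of fractions and evaluating a field trace. First the easy case: since $\chi(\Sigma_{0,2})$ is spanned by the $T_{Na}(x)$, when $N\mid k$ the skein $T_k$ lies in the base ring $\chi(\Sigma_{0,2})$. Left multiplication by an element of the base ring of a free module is scalar multiplication, so the matrix of $L_{T_k}$ is $T_k(x)\,I_N$, whose normalized trace is $T_k(x)$. This gives $Tr(T_k)=T_k$ whenever $N\mid k$, including $k=0$ where $T_0=2\cdot 1$ and $Tr(1)=1$.

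For $N\nmid k$ I would use the identification $\chi(\Sigma_{0,2})=\mathbb{C}[y]$ with $y=T_N(x)$, which holds because $T_{Na}=T_a(T_N)$ by Proposition \ref{mult} and the $T_a(y)$ span $\mathbb{C}[y]$; under it $K_N(\Sigma_{0,2})=\mathbb{C}[x]$ is free of rank $N$. The trace $Tr$ is $\chi(\Sigma_{0,2})$-linear, and since the module is free, $Tr(T_k)\in\mathbb{C}[y]$ is the unique element whose image under $\mathbb{C}[y]\hookrightarrow\mathbb{C}(y)$ equals $\tfrac1N$ times the field trace $\mathrm{Tr}_{\mathbb{C}(x)/\mathbb{C}(y)}(T_k(x))$. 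The extension $\mathbb{C}(x)/\mathbb{C}(y)$ has degree $N$ and is separable, with minimal polynomial $T_N(X)-y$ of $x$. Writing $x=q+q^{-1}$ and fixing $s$ with $s+s^{-1}=y$ and a fixed $N$th root $s^{1/N}$, the $N$ conjugates of $x$ are $x_j=\zeta^{j}s^{1/N}+\zeta^{-j}s^{-1/N}$ for $\zeta=e^{2\pi{\bf i}/N}$ and $j=0,\dots,N-1$. Then $T_k(x_j)=\zeta^{jk}s^{k/N}+\zeta^{-jk}s^{-k/N}$, and summing over $j$ the geometric sums $\sum_{j}\zeta^{\pm jk}$ vanish precisely when $N\nmid k$, so the field trace is $0$ and $Tr(T_k)=0$. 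The same computation with $N\mid k$ returns $N(s^{a}+s^{-a})=N\,T_a(y)=N\,T_{Na}(x)$, reconfirming the easy case.

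The main obstacle is the bookkeeping that the module-theoretic normalized trace agrees with $\tfrac1N$ times the field trace after base change, together with the elementary field theory (degree $N$ from freeness, separability in characteristic $0$, and the identification of the conjugates). None of this is deep, but it is where care is needed. If one prefers to avoid field theory and stay inside the explicit matrix formula \eqref{left}, there is an elementary alternative. First show $Tr(T_b)=0$ for $1\le b\le N-1$ by checking that the diagonal entries of $Lk$ cancel: the $(0,0)$ entry is $2\delta^k_0=0$; for even $k$ the $+1$ contributed by $\delta^{|k-j|}_j$ at $j=k/2$ is cancelled by the $-1$ from $-\delta^{2N-k-j}_j$ at $j=N-k/2$; for odd $k$ neither index is an integer; and every $T_N$-term on the diagonal would force $k=N$, hence does not occur for $1\le k\le N-1$. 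Then extend to arbitrary $k$ with $N\nmid k$ by writing $k=aN+b$ with $1\le b\le N-1$, applying $\chi(\Sigma_{0,2})$-linearity to $T_{aN+b}=T_{aN}T_b-T_{aN-b}$ together with $Tr(T_{aN}T_b)=T_{aN}\,Tr(T_b)$, and inducting on $a$ via $aN-b=(a-1)N+(N-b)$ with $N-b\in\{1,\dots,N-1\}$. In this second route the base-case diagonal cancellation is the computational heart.
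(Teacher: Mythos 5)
Your proof is correct, and your primary argument takes a genuinely different route from the paper's. The paper stays entirely inside the explicit matrix \eqref{left}: for $1\le k\le N-1$ it pairs each $+1$ on the diagonal, occurring at an index $j$ with $k=2j$, against a $-1$ at the index $N-j$ (which satisfies $2N-k-(N-j)=N-j$), concludes the diagonal sums to zero, and for $N\mid k$ observes that $Lk$ is the scalar matrix $T_k I$. That is essentially your fallback route, except that you add the reduction $T_{aN+b}=T_{aN}T_b-T_{aN-b}$ with induction on $a$; this step is worth keeping, because the paper's computation literally covers only $1\le k\le N-1$ (plus $N\mid k$) and says nothing about $k>N$ with $N\nmid k$, a gap that either of your arguments closes. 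Your primary route is different in kind: you base-change to the function field, identify the normalized trace of left multiplication on the free module $\mathbb{C}[x]$ over $\mathbb{C}[y]$, $y=T_N(x)$, with $\tfrac1N$ times the field trace of $\mathbb{C}(x)/\mathbb{C}(y)$, and evaluate it on the conjugates $\zeta^j q+\zeta^{-j}q^{-1}$, where the geometric sums $\sum_j\zeta^{\pm jk}$ vanish exactly when $N\nmid k$. What this buys is uniformity in $k$, no case analysis on the matrix entries, and a conceptual explanation of why divisibility by $N$ is the relevant condition; it also meshes with the paper's own later remark that the roots of $T_N(x)=z$ are the numbers $\zeta q+\zeta^{-1}q^{-1}$. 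What the paper's matrix computation buys is that it is elementary and self-contained, never leaving $\chi(\Sigma_{0,2})$, so no separability, minimal-polynomial, or base-change bookkeeping is required. The one hypothesis your field-trace identification leans on is freeness of $K_N(\Sigma_{0,2})$ over $\chi(\Sigma_{0,2})$ (so that the matrix over $\mathbb{C}[y]$ is the matrix over $\mathbb{C}(y)$), which the paper has already established, so nothing is missing.
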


\proof     If $1\leq k \leq N-1$, There are two ways a diagonal entry can be nonzero,  if $|k-j|=j$ and $k+j<N$ or if $2N-k-j=j$ and $k+j>N-1$.  In fact, the $j\in \{0,1,\ldots, N-1\}$ satisfying the first condition can be placed in one-to-one correspondence with the $j\in\{0,\ldots,N-1\}$ that satisfy the second condition.  In the first case,
since $k>0$, $k-j=j$, but this implies that $k+N-j>N-1$ and $2N-k-(N-j)=N-j$, so $N-j$ satisfies the second condition.  Since $j$ and $N-j$ have the different parity they are not equal. From the formula for $Lk_{i,j}$, $j$ of the first kind gives rise to a $+1$ on the diagonal, and $j$ of the second kind gives rise to a $-1$ on the diagonal.  Hence all diagonal entries are $1,0$ or $-1$ and they sum to zero.  

If $N|k$ then $T_k\in \chi(\Sigma_{0,2})$ and $Lk$ is a diagonal matrix with each diagonal entry equal to $T_k$, meaning the normalized trace of $Lk$ is  $T_k$. \qed

\begin{remark}This gives a simple rule for computing the trace.  Write a skein in the form $\sum_{i}\alpha_iT_i(x)$ where the $\alpha_i\in S^{-1}\chi(\Sigma_{0,2})$, then $Tr(\alpha)=\sum_{N|i}\alpha_i T_i(x)$.  That is, throw out all the terms where $i$ is not divisible by $N$ and the sum of the remaining terms is the trace.

\end{remark}

We define  a pairing $\sigma:K_N(F)\otimes K_N(F)\rightarrow \chi(F)$ by 
$\sigma(\alpha,\beta)=Tr(\alpha\beta)$.

Unfortunately we cannot diagonalize the pairing unless we work over the field of fractions.  When $N=5$ the matrix of the pairing with respect to the basis $T_i$ where $i$ ranges from $0$ to $4$ is,
\begin{equation}\label{beta} \begin{pmatrix} 2T_0(x) & 0 & 0 & 0 & 0 \\ 0 & T_0(x) & 0 & 0 & T_5(x) \\ 0 & 0 & T_0(x) & T_5(x) & 0 \\ 0 & 0 & T_5(x) & T_0(x) & 0 \\ 0 & T_5(x) & 0 & 0 & T_0(x)\end{pmatrix}.\end{equation}

This example is general in the sense that the matrix of the pairing consists of a $1\times 1$ block with a $2$ and an $N-1\times N-1$ block that has $1$'s on the diagonal, $T_N(x)$ on the antidiagonal, and zeroes everywhere else.

\begin{proposition} The pairing is nondegenerate.  For every $\alpha\neq 0$ there exists $\beta\in K_N(\Sigma_{0,2})$ so that $Tr(\alpha\beta)\neq 0$. \end{proposition}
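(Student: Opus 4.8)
The plan is to reduce nondegeneracy to the single statement that the matrix of the pairing displayed in \eqref{beta} has nonzero determinant as an element of the integral domain $\chi(\Sigma_{0,2}) \subset \mathbb{C}[x]$, and then to read off the existence of a suitable $\beta$.

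First I would record the entries of the pairing matrix $M$, with $M_{ij} = \sigma(T_i,T_j) = Tr(T_iT_j)$, in the basis $T_0,\ldots,T_{N-1}$. Using the product-to-sum formula $T_iT_j = T_{i+j}+T_{|i-j|}$ together with the trace rule ($Tr(T_k)=T_k$ when $N\mid k$ and $0$ otherwise) and the oddness of $N$, I would verify that for $1\le i,j\le N-1$ the only nonzero entries are $M_{ii}=T_0(x)=2$ on the diagonal and $M_{i,N-i}=T_N(x)$, while $M_{00}=2T_0(x)=4$. Concretely, $M_{ii}=Tr(T_{2i}+T_0)=T_0$ because $N\nmid 2i$ for $1\le i\le N-1$, and $M_{i,N-i}=Tr(T_N+T_{|2i-N|})=T_N$ for the same reason. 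This recovers the block structure already asserted after \eqref{beta}.

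Next I would compute the determinant. Pairing the index $i$ with $N-i$, which is legitimate because $N$ odd forces $i\neq N-i$, decomposes $M$ into the $1\times 1$ block $(4)$ together with $(N-1)/2$ copies of the $2\times 2$ block $\left(\begin{smallmatrix} 2 & T_N(x) \\ T_N(x) & 2\end{smallmatrix}\right)$, each of determinant $4-T_N(x)^2$. Hence $\det M = 4\,(4-T_N(x)^2)^{(N-1)/2}$. Since $T_N(x)$ is a polynomial of degree $N\ge 3$, the polynomial $4-T_N(x)^2$ has degree $2N$ and is therefore a nonzero element of $\mathbb{C}[x]$, so $\det M\neq 0$ in $\chi(\Sigma_{0,2})$.

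Finally I would translate this back into the stated claim. Given $\alpha=\sum_i\alpha_iT_i\neq 0$, its coordinate vector $(\alpha_i)$ over $\chi(\Sigma_{0,2})$ is nonzero; because $\det M\neq 0$ the matrix $M$ is invertible over the fraction field $S^{-1}\chi(\Sigma_{0,2})$, so $M(\alpha_i)\neq 0$ there, and since the entries of $M(\alpha_i)$ already lie in $\chi(\Sigma_{0,2})$ they are not all zero. Thus some entry $\sigma(T_j,\alpha)$ is a nonzero element of $\chi(\Sigma_{0,2})$, and taking $\beta=T_j$ with the symmetry $\sigma(\alpha,\beta)=Tr(\alpha\beta)=Tr(\beta\alpha)=\sigma(\beta,\alpha)$ yields $Tr(\alpha\beta)\neq 0$. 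The main obstacle is purely bookkeeping: correctly pinning down the nonzero pattern of $M$, where the oddness of $N$ is essential, since it both rules out a self-paired middle index and guarantees $N\nmid 2i$ for $i\not\equiv 0 \pmod N$; once the pattern is fixed the determinant and the conclusion are immediate.
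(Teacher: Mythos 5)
Your proof is correct and takes essentially the same approach as the paper: both compute the matrix of the pairing in the basis $T_0,\ldots,T_{N-1}$ and show its determinant, $2T_0(x)\bigl(T_0(x)^2-T_N(x)^2\bigr)^{\frac{N-1}{2}}$ (which equals your $4\,(4-T_N(x)^2)^{(N-1)/2}$ since $T_0=2$), is a nonzero polynomial. The only differences are cosmetic: you evaluate the determinant by pairing indices $i$ and $N-i$ into $2\times 2$ blocks rather than by row operations, and you spell out the final linear-algebra step from a nonvanishing determinant to the existence of $\beta$, which the paper leaves implicit.
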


\proof  At level $N$ the pairing has $\beta(T_0(x)T_0(x))=2T_0(x)$, and $\beta(T_k(x)T_l(x))=T_0(x)$ if $k=l$ and $T_N(x)$ if $k+l=N$.  

The corresponding matrix has determinant \[2T_0(x)(T_0^2(x)-T_N(x)^2)^{\frac{N-1}{2}}.\] 
To see this, decompose the matrix of the pairing  into blocks. One block has determinant $2T_0(x)$.  The other block is easy to make upper triangular with row operations, so that the determinant is evident.
\qed 

\begin{theorem} $S^{-1}K_N(\Sigma_{0,2})$ is a Frobenius algebra over the function field of the character variety of $\pi_1(\Sigma_{0,2})$.\end{theorem}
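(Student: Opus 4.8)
The plan is to apply the criterion from the background section: once we know that the localized normalized trace $Tr\colon S^{-1}K_N(\Sigma_{0,2})\rightarrow S^{-1}\chi(\Sigma_{0,2})$ is nondegenerate, it follows at once that $S^{-1}K_N(\Sigma_{0,2})$ is a (symmetric) Frobenius algebra over the function field $S^{-1}\chi(\Sigma_{0,2})$. Since $K_N(\Sigma_{0,2})$ is free of rank $N$ over $\chi(\Sigma_{0,2})$ with basis $T_0(x),\ldots,T_{N-1}(x)$, localization preserves this basis: the same elements form a basis of $S^{-1}K_N(\Sigma_{0,2})$ as a vector space over the field $S^{-1}\chi(\Sigma_{0,2})$. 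The normalized trace extends uniquely to an $S^{-1}\chi(\Sigma_{0,2})$-linear map, and because $Tr(\alpha\beta)=Tr(\beta\alpha)$ the resulting pairing $\sigma(\alpha,\beta)=Tr(\alpha\beta)$ is a symmetric bilinear form over the field.

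First I would reduce nondegeneracy of $\sigma$ to the invertibility of its Gram matrix. Over a field, a symmetric bilinear form is nondegenerate precisely when the determinant of its matrix in any basis is a nonzero element of that field. The matrix of $\sigma$ in the basis $\{T_k(x)\}$ is exactly the one identified above: a $1\times 1$ block equal to $2T_0(x)$ together with an $(N-1)\times(N-1)$ block carrying $1$'s on the diagonal, $T_N(x)$ on the antidiagonal, and zeroes elsewhere. Thus the entire question reduces to the value of its determinant as an element of $S^{-1}\chi(\Sigma_{0,2})$.

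From the preceding proposition this determinant equals $2T_0(x)\bigl(T_0^2(x)-T_N(x)^2\bigr)^{\frac{N-1}{2}}$. Since $T_0(x)=2$ and $T_N(x)$ is a polynomial of degree $N\geq 3$, the factor $T_0^2(x)-T_N(x)^2=4-T_N(x)^2$ is a nonzero polynomial, so the determinant is a nonzero element of $\chi(\Sigma_{0,2})$ and hence a unit in its field of fractions. Therefore the Gram matrix is invertible over $S^{-1}\chi(\Sigma_{0,2})$, the pairing $\sigma$ is nondegenerate, and the background criterion yields the theorem. The only point requiring care, and the sole possible obstacle, is the passage from nondegeneracy over the ring $\chi(\Sigma_{0,2})$ to nondegeneracy over its field of fractions; this is handled cleanly by the explicit determinant, which is nonzero as a polynomial and therefore becomes invertible once we invert $S=\chi(\Sigma_{0,2})-\{0\}$. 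No new skein-theoretic input is needed beyond what the previous propositions already supply.
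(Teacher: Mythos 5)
Your proof is correct and follows essentially the same route as the paper: the theorem is an immediate consequence of the preceding proposition computing the determinant $2T_0(x)\bigl(T_0^2(x)-T_N(x)^2\bigr)^{\frac{N-1}{2}}$ of the Gram matrix of $\sigma$, together with the background criterion that a nondegenerate trace pairing makes the algebra a symmetric Frobenius algebra. Your added remark that the nonzero determinant in $\chi(\Sigma_{0,2})$ becomes a unit after inverting $S=\chi(\Sigma_{0,2})-\{0\}$ is exactly the (implicit) bridge the paper relies on in passing from the ring to the function field.
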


\qed

\begin{proposition} $S^{-1}K_N(\Sigma_{0,2})$ is a field. \end{proposition}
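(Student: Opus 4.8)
The plan is to recognize $S^{-1}K_N(\Sigma_{0,2})$ as a finite-dimensional integral domain over a field and then to invoke the standard fact that any such object is automatically a field. First I would assemble the ring-theoretic picture already established above. The algebra $K_N(\Sigma_{0,2})\cong\mathbb{C}[x]$ is an integral domain, and by the preceding propositions $\chi(\Sigma_{0,2})$ is the subring $\mathbb{C}[T_N(x)]$, over which $K_N(\Sigma_{0,2})$ is free of rank $N$ with basis $T_0(x),\dots,T_{N-1}(x)$. Since $x$ is transcendental over $\mathbb{C}$ and $T_N(x)$ is a nonconstant polynomial in $x$, the ring $\chi(\Sigma_{0,2})=\mathbb{C}[T_N(x)]$ is a genuine polynomial ring in the single transcendental $T_N(x)$; hence $S=\chi(\Sigma_{0,2})\setminus\{0\}$ is its full set of nonzero elements and $S^{-1}\chi(\Sigma_{0,2})=\mathbb{C}(T_N(x))$ is a field.

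Next I would check the two properties needed for the lemma. Because $S$ does not contain $0$, the localization $S^{-1}K_N(\Sigma_{0,2})$ is a localization of the domain $\mathbb{C}[x]$ and therefore is again an integral domain; in fact it embeds into the fraction field $\mathbb{C}(x)$. Finiteness is immediate from the freeness result: localization carries a basis to a spanning set, so the images of $T_0(x),\dots,T_{N-1}(x)$ span $S^{-1}K_N(\Sigma_{0,2})$ over $\mathbb{C}(T_N(x))$, whence its dimension is at most $N$.

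The final step is the key lemma: a commutative integral domain $B$ that is finite-dimensional as a vector space over a subfield $k$ must be a field. For nonzero $a\in B$, multiplication $m_a\colon B\rightarrow B$ is a $k$-linear map which is injective because $B$ has no zero divisors, hence surjective by finite-dimensionality; thus $1$ lies in its image and $a$ is invertible. Applying this with $B=S^{-1}K_N(\Sigma_{0,2})$ and $k=\mathbb{C}(T_N(x))$ completes the argument.

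I expect essentially no serious obstacle here; the only points requiring care are (i) verifying that $\chi(\Sigma_{0,2})$ is a polynomial ring in the single element $T_N(x)$, so that its localization at $S$ is truly a field rather than merely a domain, and (ii) confirming that passing to the localization neither destroys the domain property nor increases the dimension. As a sanity check one expects $S^{-1}K_N(\Sigma_{0,2})$ to be nothing but $\mathbb{C}(x)$ itself: the extension $\mathbb{C}(x)/\mathbb{C}(T_N(x))$ has degree $N$, and our domain is sandwiched between $\mathbb{C}[x]$ and $\mathbb{C}(x)$ while having the same dimension $N$ over $\mathbb{C}(T_N(x))$, forcing equality. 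This also suggests a more explicit alternative route, namely proving directly that $T_N(X)-T_N(x)$ is irreducible in $\mathbb{C}(T_N(x))[X]$, which would exhibit $S^{-1}K_N(\Sigma_{0,2})\cong\mathbb{C}(T_N(x))[X]/\bigl(T_N(X)-T_N(x)\bigr)$ as a field; I would keep the finite-dimensional-domain argument as the primary proof since it avoids any irreducibility computation.
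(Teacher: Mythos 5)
Your proof is correct and follows essentially the same route as the paper: both recognize $S^{-1}K_N(\Sigma_{0,2})$ as a finite extension of the field $S^{-1}\chi(\Sigma_{0,2})$ and conclude via the standard fact that an integral domain which is finite-dimensional over a subfield is itself a field. The only cosmetic difference is that the paper certifies finiteness by exhibiting the explicit monic polynomial that $x$ satisfies over $\chi(\Sigma_{0,2})$, whereas you use the previously established rank-$N$ freeness and spell out the injective-multiplication argument that the paper leaves implicit.
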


\proof By definition  $S^{-1}\chi(\Sigma_{0,2})$ is a field.  We obtain  $S^{-1}K_N(\Sigma_{0,2})$ as an finite  extension of $K_1(\Sigma_{0,2})$ where we are adjoining $x$ that satisfies, 

\[ x^N=T_N(x)-\sum_{i=1}^{\floor{N/2}}(-1)^i\frac{N}{N-i}\binom{N-i}{i}x^{N-2i}.\]

Therefore $S^{-1}K_N(\Sigma_{0,2})$ is a finite extension of the field $S^{-1}\chi(\Sigma_{0,2})$, so it is itself a field. \qed

This gives a second proof that $K_N(\Sigma_{02})$ is Frobenius as it has no nontrivial ideals contained in $ker(Tr)$. The computational proof gives more information as knowing the determinant of the pairing tells you the locus of points where the specialized skein algebra is not Frobenius.

\begin{remark}More generally, if $F$ is a compact oriented surface and $S=\chi(F)-\{0\}$, then
in $S^{-1}K_N(F)$ any simple diagram is a unit.\end{remark}

Next we explore specializing $K_N(\Sigma_{0,2})$ at a place.  The character variety of the annulus can be understood as the complex plane. The places $\phi:\chi(\Sigma_{0,2})\rightarrow \mathbb{C}$ are determined by where $T_N(x)$ is sent.  Assume $\phi(T_N(x))=z$. Denote $K_N(\Sigma_{0,2})/ker(\phi)$ by $K_N(\Sigma_{0,2})_z$.  The action of $T_1$ by left multiplication comes from substituting $z$ for $T_N(x)$ in Equation \ref{left}.
In the case $N=5$ this is,

\[\begin{pmatrix} 0 & 1 & 0 & 0 & \frac{1}{2}z \\ 2 & 0 & 1 & 0 & 0 \\ 0 & 1 & 0 & 1 & 0 \\ 0 & 0 & 1 & 0 & 1 \\ 0 & 0 & 0 & 1 & 0 \end{pmatrix}.\]

The determinant of the matrix for left multiplication by $T_1$ specialized at $T_N(x)=z$ is $z$. You can see this by expanding the determinant by the last column. It is easy to see that only one term contributes to the final answer and it's contribution is $z$. Left multiplication by $T_1(x)$ is an invertible map unless $z=0$. If $z\neq 0$, then we can find $q\in \mathbb{C}$ so that $q^N
+q^{-N}=z$.  The roots of $T_N(x)=z$ are exactly the numbers $\zeta q+\zeta^{-1}q^{-1}$ where $\zeta$ is a $Nth$ root of unity.

\begin{theorem} $K_N(\Sigma_{0,2})_{z}$ is a Frobenius algebra over $\mathbb{C}$ so long as $z\neq \pm 2$. \end{theorem}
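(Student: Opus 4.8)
The plan is to reduce the statement to the nondegeneracy of the trace pairing and then evaluate a single determinant. Recall that $K_N(\Sigma_{0,2})_{z}$ carries the normalized trace $Tr_{\phi}$ coming from left multiplication, and that by the general principle recorded in the Background section an algebra equipped with such a trace is a symmetric Frobenius algebra precisely when the associated pairing $\sigma(\alpha,\beta)=Tr_{\phi}(\alpha\beta)$ is nondegenerate, i.e.\ when its Gram matrix in some (any) basis is invertible over $\mathbb{C}$. Since $Tr_{\phi}(\alpha\beta)=Tr_{\phi}(\beta\alpha)$, symmetry is automatic, so the entire content is the invertibility of this Gram matrix.

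First I would fix the basis. Because $K_N(\Sigma_{0,2})$ is free of rank $N$ over $\chi(\Sigma_{0,2})$ with basis $T_0(x),\dots,T_{N-1}(x)$, and specializing at $\phi$ amounts to base change along $\phi:\chi(\Sigma_{0,2})\to\mathbb{C}$, the images of $T_0,\dots,T_{N-1}$ form a $\mathbb{C}$-basis of $K_N(\Sigma_{0,2})_{z}$. I would next observe that forming the left-multiplication matrix and taking its trace commutes with $\phi$: the matrices $Lk$ of Equation \ref{left} have entries in $\chi(\Sigma_{0,2})$, and $Tr_{\phi}$ is obtained by applying $\phi$ to those entries and summing the diagonal. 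Hence the trace rule of the Remark specializes directly; since $0$ is the only index in $\{0,\dots,N-1\}$ divisible by $N$, $Tr_{\phi}$ simply reads off the $T_0$-coefficient, with $Tr_{\phi}(T_0)=2$ and $Tr_{\phi}(T_k)=0$ for $1\le k\le N-1$.

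With this in hand the Gram matrix is immediate from the product-to-sum formula $T_kT_l=T_{k+l}+T_{|k-l|}$ with $T_N$ replaced by $z$. The entry $\sigma(T_k,T_l)$ is nonzero only when $k+l$ or $|k-l|$ is divisible by $N$, which for $1\le k,l\le N-1$ happens exactly when $k=l$ (giving $T_0=2$ on the diagonal) or when $k+l=N$ (giving $z$ on the antidiagonal), together with the single entry $\sigma(T_0,T_0)=2T_0=4$. This is precisely the specialization at $T_N(x)=z$, $T_0(x)=2$ of the Gram matrix computed earlier over the function field, so after reordering the basis it splits into a $1\times 1$ block $[\,4\,]$ and $(N-1)/2$ blocks $\left(\begin{smallmatrix} 2 & z \\ z & 2\end{smallmatrix}\right)$ indexed by the pairs $\{k,N-k\}$, each of determinant $4-z^2$. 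Its determinant is therefore the specialization $4\,(4-z^2)^{(N-1)/2}$ of $2T_0(x)\bigl(T_0(x)^2-T_N(x)^2\bigr)^{(N-1)/2}$.

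The conclusion is then a one-line observation: $4\,(4-z^2)^{(N-1)/2}\neq 0$ if and only if $z\neq\pm 2$, so the pairing is nondegenerate for exactly these $z$, whence $K_N(\Sigma_{0,2})_{z}$ is a (symmetric) Frobenius algebra. I expect no serious obstacle: the only point requiring genuine care is justifying that ``take the trace, then specialize'' agrees with ``specialize, then take the trace,'' which holds because left multiplication is $\chi(\Sigma_{0,2})$-linear and $\phi$ is a ring homomorphism applied entrywise to the matrices $Lk$. Everything else is the block-determinant bookkeeping already carried out over the function field, now read off at $T_N(x)=z$.
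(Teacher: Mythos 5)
Your proposal is correct and follows essentially the same route as the paper: specialize the Gram matrix of the trace pairing at $T_N(x)=z$, observe it splits into a $1\times 1$ block and $(N-1)/2$ antidiagonal $2\times 2$ blocks, and note the determinant vanishes only at $z=\pm 2$. Your extra care in checking that specialization commutes with taking the trace, and your determinant $4(4-z^2)^{(N-1)/2}$ (consistent with the paper's earlier unspecialized formula $2T_0(x)(T_0(x)^2-T_N(x)^2)^{(N-1)/2}$, whereas the proof in the paper drops a harmless factor of $T_0$), only make the argument more complete.
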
.

\proof The pairing $\beta:K_N(\Sigma_{0,2})_{z}\otimes K_N(\Sigma_{0,2})_{z}\rightarrow  \mathbb{C}$ comes from the $N\times N$ matrix analogous to the one displayed in Equation \ref{beta}  by specializing. It's determinant is $2(T_0(x)^2-z^2)^{\frac{N-1}{2}}$. Recalling that $T_0(x)=2$, the pairing is nondegenerate so long as $z\neq \pm 2$. \qed

\section{The Pair of Pants}

The skein algebra of the pair of pants is isomorphic to $\mathbb{C}[x,y,z]$, the polynomials with complex coefficients in three variables. The three variables correspond to blackboard framed curves that are parallel to each of the boundary components.   Since $\mathbb{C}[x,y,z]$ is the tensor product of three copies of the polynomials in a single variable our analysis of $K_N(\Sigma_{0,2})$ can be used to analyze this case.   

In specific, the skeins $T_a(x)T_b(y)T_c(z)$ where $a,b,c\geq 0$ form a basis for $K_N(\Sigma_{0,3})$.  The image of the threading map $\chi(\Sigma_{0,3})$ is  $\{T_{Na}(x)T_{Nb}(y)T_{Nc}(z)\}$, where the $a,b$ and $c$ range over all non-negative integers. The skeins
$T_j(x)T_k(y)T_l(z)$, where $j,k,l$ range over $0\ldots N-1$ form a basis for $K_N(\Sigma_{0,3})$ as a module over $\chi(\Sigma_{0,3})$.  
With respect to this basis the matrix of left multiplication by  $T_j(x)T_k(y)T_l(z)$ is the tensor product of the three matrices coming from their actions on $K_N(\Sigma_{0,3})$.  Define $Tr:K_N(\Sigma_{0,3})\rightarrow \chi(\Sigma_{0,3})$ as $\frac{1}{N^3}$ times the trace of this matrix.

\begin{proposition} The map $Tr:K_N(\Sigma_{0,3})\rightarrow \chi(\Sigma_{0,3})$ is the identity when restricted to $\chi(\Sigma_{0,3})$
and if one of $j,k,l$ is not divisible by $N$ it sends $T_j(x)T_k(y)T_l(z)$ to zero. \end{proposition}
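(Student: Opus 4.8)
The plan is to leverage the tensor-product structure already recorded above. Left multiplication by $T_j(x)T_k(y)T_l(z)$ on $K_N(\Sigma_{0,3})$ is the tensor product $Lj\otimes Lk\otimes Ll$ of the three single-variable left-multiplication matrices from the annulus section, each an $N\times N$ matrix over the relevant copy of $\chi(\Sigma_{0,2})$. Since the trace of a tensor product of matrices is the product of the traces, and the normalization constant factors as $\frac{1}{N^3}=\frac{1}{N}\cdot\frac{1}{N}\cdot\frac{1}{N}$, the normalized trace on the pair of pants factors as a product of three single-variable normalized traces. This reduces everything to the annulus computation carried out in the previous section.

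First I would make the factorization precise. Writing $Tr_1$ for the normalized trace on $K_N(\Sigma_{0,2})$, the identity $\mathrm{tr}(Lj\otimes Lk\otimes Ll)=\mathrm{tr}(Lj)\,\mathrm{tr}(Lk)\,\mathrm{tr}(Ll)$ together with the factored normalization gives
\[ Tr\big(T_j(x)T_k(y)T_l(z)\big)=Tr_1(T_j)\,Tr_1(T_k)\,Tr_1(T_l), \]
an identity of elements of $\chi(\Sigma_{0,3})$ in which the three factors lie in the $x$-, $y$-, and $z$-copies of $\chi(\Sigma_{0,2})$ respectively. Next I would quote the annulus proposition, which states that $Tr_1(T_i)=0$ unless $N\mid i$ and $Tr_1(T_i)=T_i$ when $N\mid i$.

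With the factorization in hand the two assertions are immediate. If at least one of $j,k,l$ fails to be divisible by $N$, the corresponding factor $Tr_1(T_\bullet)$ vanishes, so the product is $0$ and $T_j(x)T_k(y)T_l(z)$ is sent to zero. For the restriction to $\chi(\Sigma_{0,3})$, whose spanning skeins $T_{Na}(x)T_{Nb}(y)T_{Nc}(z)$ have all three indices divisible by $N$, each factor returns the corresponding Tchebychev polynomial, so the product is the original skein; alternatively, one observes directly that a central element $\gamma\in\chi(\Sigma_{0,3})$ acts as $\gamma$ times the identity on the rank-$N^3$ free module, hence has unnormalized trace $N^3\gamma$ and normalized trace $\gamma$. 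Either way $Tr$ is the identity on $\chi(\Sigma_{0,3})$.

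The one point requiring genuine care is the factorization of the trace across the coefficient ring. I would verify that $\chi(\Sigma_{0,3})$ is the tensor product of the three single-variable character rings, so that the product $Tr_1(T_j)\,Tr_1(T_k)\,Tr_1(T_l)$ genuinely lands in $\chi(\Sigma_{0,3})$ and is read off in the correct factors; the commutativity of $K_N(\Sigma_{0,3})\cong\mathbb{C}[x,y,z]$ is precisely what licenses treating the three variables independently and guarantees that the tensor-product identity for the trace holds over this commutative base. Everything else is a direct appeal to the annulus case.
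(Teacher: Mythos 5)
Your proposal is correct and follows exactly the paper's route: the paper's proof is the one-line observation that the trace is the tensor product of three copies of the annulus trace, which you have simply unpacked (trace of a tensor product is the product of traces, normalization $\frac{1}{N^3}=\left(\frac{1}{N}\right)^3$, then quote the annulus computation). Your extra care about $\chi(\Sigma_{0,3})$ being the tensor product of the three single-variable character rings, and the alternative scalar-action argument for the restriction to $\chi(\Sigma_{0,3})$, are sound but add nothing beyond what the paper's argument already implicitly uses.
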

\proof The trace is just the tensor product of three copies of the trace on $K_N(\Sigma_{0,2})$. \qed

The evaluation of the trace is essentially the same as for the annulus. Write a skein in terms of the basis $T_i(x)T_j(y)T_k(z)$.  If one of the $i$, $j$ or $k$ is not divisible by $N$ throw it out, and the trace is the sum of the remaining terms.

We define the pairing $\sigma:K_N(\Sigma_{0,3})\otimes K_N(\Sigma_{0,3})\rightarrow \chi(\Sigma_{0,3})$ to be $\sigma(\alpha\otimes \beta)=Tr(\alpha\beta)$.

\begin{proposition} As an algebra over the field of fractions of $\chi(\Sigma_{0,3})$, $S^{-1}K_N(\Sigma_{0,3})$ is Frobenius. \end{proposition}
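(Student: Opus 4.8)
The plan is to exploit the tensor product structure $K_N(\Sigma_{0,3})\cong K_N(\Sigma_{0,2})\otimes K_N(\Sigma_{0,2})\otimes K_N(\Sigma_{0,2})$ and to reduce nondegeneracy of $\sigma$ to the determinant computation already carried out for the annulus. The conceptual statement behind this is simply that a tensor product of nondegenerate pairings is nondegenerate, but since the paper's idiom is to exhibit the determinant of the pairing matrix, I would make that explicit.

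First I would record that, with respect to the basis $T_j(x)T_k(y)T_l(z)$ with $j,k,l$ ranging over $0,\ldots,N-1$, the matrix of $\sigma$ is the Kronecker product of three copies of the annulus pairing matrix. Since the three variables commute,
\[
\bigl(T_j(x)T_k(y)T_l(z)\bigr)\bigl(T_{j'}(x)T_{k'}(y)T_{l'}(z)\bigr)=T_j(x)T_{j'}(x)\,T_k(y)T_{k'}(y)\,T_l(z)T_{l'}(z),
\]
and because $Tr$ is the tensor product of the three annulus traces, the entry $\sigma\bigl(T_jT_kT_l,\,T_{j'}T_{k'}T_{l'}\bigr)$ factors as the product of the three corresponding annulus pairing entries. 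Hence the $N^3\times N^3$ matrix $M$ of $\sigma$ equals $\beta_x\otimes\beta_y\otimes\beta_z$, where $\beta$ is the $N\times N$ matrix from Equation \ref{beta}, read in the appropriate variable in each factor. I would also note, as a routine check, that the $T_jT_kT_l$ are already a free basis of rank $N^3$ over $\chi(\Sigma_{0,3})$, so no linear dependence is introduced upon localization and the same set remains a basis of $S^{-1}K_N(\Sigma_{0,3})$.

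Next I would compute $\det M$ using multiplicativity of the determinant under Kronecker products: for $N\times N$ matrices $\det(A\otimes B)=(\det A)^N(\det B)^N$, and grouping $M=\beta_x\otimes(\beta_y\otimes\beta_z)$ and applying this twice gives
\[
\det M=\bigl(\det\beta_x\,\det\beta_y\,\det\beta_z\bigr)^{N^2}.
\]
By the annulus computation each factor equals $2T_0(x)\bigl(T_0(x)^2-T_N(x)^2\bigr)^{(N-1)/2}$ in its own variable, a nonzero polynomial, hence an invertible element of the function field $S^{-1}\chi(\Sigma_{0,3})$. Since $T_N(x),T_N(y),T_N(z)$ are algebraically independent there is no cancellation across the factors, so $\det M\neq 0$; therefore $\sigma$ is nondegenerate over $S^{-1}\chi(\Sigma_{0,3})$, and because $Tr(\alpha\beta)=Tr(\beta\alpha)$ the algebra $S^{-1}K_N(\Sigma_{0,3})$ is in fact a symmetric Frobenius algebra.

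The genuine content here is modest: the only point demanding care is the bookkeeping that the tensor factorization of the trace really produces the Kronecker factorization of the pairing matrix. Once that is in place, the determinant is forced to be nonzero as a product of nonzero polynomials in three independent variables, and no locus-of-degeneracy subtlety arises, precisely because we have passed to the function field rather than specialized at a place.
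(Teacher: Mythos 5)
Your proof is correct, but it takes a genuinely different route from the paper. The paper's entire proof is the one-line appeal ``the tensor product of Frobenius algebras is a Frobenius algebra,'' whereas you make the Frobenius property explicit: you identify the matrix of $\sigma$ in the basis $T_j(x)T_k(y)T_l(z)$ as the Kronecker product $\beta_x\otimes\beta_y\otimes\beta_z$ of three annulus pairing matrices, and then compute $\det M=\bigl(\det\beta_x\,\det\beta_y\,\det\beta_z\bigr)^{N^2}\neq 0$ in the integral domain $\chi(\Sigma_{0,3})$, hence invertibility in the function field. Each approach buys something. The paper's is shorter, but it quietly glosses over a base-change issue: the three annulus algebras are Frobenius over three \emph{different} fields $S^{-1}\chi(\Sigma_{0,2})$ (one per variable), and their tensor product over $\mathbb{C}$ is an algebra over the tensor product of those fields, which is not the function field $S^{-1}\chi(\Sigma_{0,3})$ — a further localization is needed before the standard ``tensor of Frobenius is Frobenius'' fact applies. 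Your computation works from the start with the basis over $\chi(\Sigma_{0,3})$ and its localization, so it fills exactly that gap; moreover the explicit determinant, whose factors are $2T_0\bigl(T_0^2-T_N^2\bigr)^{(N-1)/2}$ in each variable, is precisely what identifies the degeneracy locus used in the paper's subsequent proposition on specialization at places (the condition that none of the evaluations hit $\pm 2$). One small imprecision: the identity you quote, $\det(A\otimes B)=(\det A)^N(\det B)^N$, holds for two $N\times N$ matrices, but in the grouping $M=\beta_x\otimes(\beta_y\otimes\beta_z)$ the second factor is $N^2\times N^2$, so you need the general form $\det(A\otimes B)=(\det A)^n(\det B)^m$ for $m\times m$ and $n\times n$ blocks; your final exponent $N^2$ is nevertheless the right one.
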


\proof The tensor product of Frobenius algebras is a Frobenius algebra. \qed

\begin{proposition} $S^{-1}K_N(\Sigma_{0,3})$ is a field. \end{proposition}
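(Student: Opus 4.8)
The plan is to argue directly rather than through the tensor product. The tensor product of the three annulus fields $S^{-1}K_N(\Sigma_{0,2})$ over $\mathbb{C}$ is not a field---it is a domain of positive Krull dimension---so the previous propositions do not immediately yield the claim; the point is that localizing at the full set $S=\chi(\Sigma_{0,3})-\{0\}$ inverts far more elements (such as $T_N(x)+T_N(y)$) than the tensor product does, and it is this extra inverting that collapses the ring to a field. I would instead invoke the standard fact that a commutative integral domain $A$ which is finite dimensional as a vector space over a subfield $k$ is itself a field: for any $0\neq a\in A$ the multiplication map $m_a\colon x\mapsto ax$ is $k$-linear and injective (since $A$ has no zero divisors), hence surjective by finite dimensionality, so $a$ is invertible.

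First I would record the two structural facts needed. Because $K_N(\Sigma_{0,3})\cong\mathbb{C}[x,y,z]$ is an integral domain and $S=\chi(\Sigma_{0,3})-\{0\}$ is a multiplicative subset of it not containing $0$, the localization $S^{-1}K_N(\Sigma_{0,3})$ sits inside the fraction field $\mathbb{C}(x,y,z)$ and is therefore an integral domain. Moreover $S^{-1}\chi(\Sigma_{0,3})$ is the field of fractions of the polynomial ring $\mathbb{C}[T_N(x),T_N(y),T_N(z)]$, hence a genuine field.

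Next I would bound the dimension. By the analysis of this section, $K_N(\Sigma_{0,3})$ is free of rank $N^3$ over $\chi(\Sigma_{0,3})$ with basis $\{T_j(x)T_k(y)T_l(z):0\leq j,k,l\leq N-1\}$; passing to $S^{-1}$ can only introduce relations among these generators, so $S^{-1}K_N(\Sigma_{0,3})$ is spanned over $S^{-1}\chi(\Sigma_{0,3})$ by that same finite set and is thus at most $N^3$-dimensional.

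Combining these observations, $S^{-1}K_N(\Sigma_{0,3})$ is a finite-dimensional integral domain over the field $S^{-1}\chi(\Sigma_{0,3})$, and the lemma above makes it a field. I do not expect a genuine obstacle in the routine verifications; the only subtlety worth flagging is the temptation to assemble the field as a tensor product of the three annulus fields, which fails because that tensor product is not a field. The finite-domain-over-a-field argument sidesteps this entirely and disposes of all three variables at once.
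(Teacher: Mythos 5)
Your proof is correct, but it is not the paper's argument: the paper's entire proof of this proposition is the sentence ``the tensor product of fields is a field,'' and your criticism of that sentence is apt. Read literally it is a false principle, and the object it would most naturally apply to --- the tensor product over $\mathbb{C}$ of the three annulus fields $S^{-1}K_N(\Sigma_{0,2})$ --- is an integral domain but not a field (your example works: by unique factorization in $\mathbb{C}[x,y,z]$, the element $T_N(x)+T_N(y)$ divides no product of the form $g(x)h(y)k(z)$, hence is not inverted there), and it is only a proper subring of $S^{-1}K_N(\Sigma_{0,3})$, which inverts all of $\chi(\Sigma_{0,3})-\{0\}$. A rigorous tensor-product proof would have to decompose $S^{-1}K_N(\Sigma_{0,3})$ as a tensor product over the function field $F=S^{-1}\chi(\Sigma_{0,3})$ of three degree-$N$ extensions of $F$, and then show both that each factor is a field and that the three are linearly disjoint over $F$; the paper supplies none of this. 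Your route --- the localization is a domain because it embeds in $\mathbb{C}(x,y,z)$, it is finite-dimensional over $F$ because the basis $T_j(x)T_k(y)T_l(z)$, $0\le j,k,l\le N-1$, still spans after localizing, and a finite-dimensional commutative domain over a field is a field via injective-hence-surjective multiplication maps --- is complete as written, and it is really the same mechanism the paper itself uses for the annulus, where $S^{-1}K_N(\Sigma_{0,2})$ is exhibited as a finite extension of the field $S^{-1}\chi(\Sigma_{0,2})$. What you give up is the one-line parallelism with the preceding Frobenius proposition; what you gain is an argument that actually closes, and one that would also repair that proposition if desired.
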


\proof The tensor product of fields is a field. \qed

\begin{proposition}The character variety of the free group on two generators is $\mathbb{C}^3$.
If $\phi:\mathbb{C}[x,y,z]\rightarrow \mathbb{C}$ doesn't correspond to evaluating the  variables $x$ $y$ or $z$  to $\pm 2$ the specialization of $\sigma$ is nondegernate and $K_{A}(\Sigma_{0,3})_{\phi}$ is a Frobenius algebra. \end{proposition}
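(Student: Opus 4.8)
The plan is to exploit the tensor product structure that is already in force and reduce everything to the annulus computation of the previous section. Since $K_N(\Sigma_{0,3})\cong\mathbb{C}[x,y,z]$ is the tensor product of three copies of $K_N(\Sigma_{0,2})$, one for each boundary curve, and since the trace on $K_N(\Sigma_{0,3})$ was defined to be the tensor product of three copies of the annulus trace, the pairing should inherit this factorization. First I would record that a place $\phi$ is determined by the three scalars $t_1=\phi(T_N(x))$, $t_2=\phi(T_N(y))$, $t_3=\phi(T_N(z))$, which are exactly the coordinates of the corresponding point of the character variety $\mathbb{C}^3$, and that the hypothesis of the proposition is precisely that none of the $t_i$ equals $\pm 2$.

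Next I would identify the specialized algebra. Because $\chi(\Sigma_{0,3})=\mathbb{C}[T_N(x),T_N(y),T_N(z)]$ is a polynomial ring in three generators, $\ker\phi$ is the maximal ideal generated by $T_N(x)-t_1$, $T_N(y)-t_2$, $T_N(z)-t_3$, and since each of these meets the tensor decomposition in a single annulus factor one obtains an algebra isomorphism $K_N(\Sigma_{0,3})_\phi\cong K_N(\Sigma_{0,2})_{t_1}\otimes_{\mathbb{C}}K_N(\Sigma_{0,2})_{t_2}\otimes_{\mathbb{C}}K_N(\Sigma_{0,2})_{t_3}$, under which $Tr_\phi$ becomes the tensor product of the three specialized annulus traces. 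The key step is then the elementary bilinear-algebra fact that $\sigma_\phi(\alpha,\beta)=Tr_\phi(\alpha\beta)$ is the tensor product of the three specialized annulus pairings: on basis elements $\alpha=T_{j_1}(x)T_{k_1}(y)T_{l_1}(z)$ and $\beta=T_{j_2}(x)T_{k_2}(y)T_{l_2}(z)$ the product and trace both factor coordinatewise, so in the product basis the Gram matrix of $\sigma_\phi$ is the Kronecker product $P_1\otimes P_2\otimes P_3$ of the three specialized annulus Gram matrices.

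From here I would use the determinant route, since it pins down the degeneracy locus rather than merely proving nondegeneracy generically. The Kronecker identity gives $\det(P_1\otimes P_2\otimes P_3)=(\det P_1)^{N^2}(\det P_2)^{N^2}(\det P_3)^{N^2}$, and the annulus computation gives $\det P_i=2\,(4-t_i^2)^{(N-1)/2}$. Hence $\det\sigma_\phi$ is a nonzero scalar exactly when every $t_i\neq\pm 2$, which is the hypothesis; therefore $\sigma_\phi$ is nondegenerate and $K_N(\Sigma_{0,3})_\phi$ is a symmetric Frobenius algebra. A conceptual alternative, which I would mention, is to invoke directly that a tensor product of Frobenius algebras is Frobenius, exactly as in the proof over the function field, once each annulus factor $K_N(\Sigma_{0,2})_{t_i}$ is known to be Frobenius by the annulus theorem.

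The one place demanding care — the main obstacle, such as it is — is bookkeeping rather than a genuine difficulty, since the whole argument collapses onto the annulus. I must verify that the degeneracy locus really is $\{t_i=\pm 2\}$ when expressed in the coordinates named $x,y,z$ in the statement, which means tracking the distinction between evaluating a boundary-curve skein and evaluating its threading $T_N$: the coordinate that the hypothesis calls $x$ is the image of $T_N$ of the core curve, so ``$x=\pm 2$'' is to be read as $\phi(T_N(x))=t_1=\pm 2$, matching the annulus threshold. I would also confirm that passing to the quotient commutes with forming the Gram matrix, that is, that the Kronecker factorization of $\sigma$ survives specialization, so that the determinant computed before specializing indeed specializes to $\det\sigma_\phi$.
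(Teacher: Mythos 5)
Your proposal is correct, and it is essentially the argument the paper intends: the paper in fact states this proposition with no proof at all, relying implicitly on the tensor-product structure of the section, which is exactly what you spell out. Your reduction --- identifying $K_N(\Sigma_{0,3})_\phi$ with $K_N(\Sigma_{0,2})_{t_1}\otimes_{\mathbb{C}} K_N(\Sigma_{0,2})_{t_2}\otimes_{\mathbb{C}} K_N(\Sigma_{0,2})_{t_3}$, noting that the trace and hence the Gram matrix factor as a Kronecker product, and computing $\det\sigma_\phi=\prod_{i}\bigl(2(4-t_i^2)^{(N-1)/2}\bigr)^{N^2}$ --- matches the annulus theorem's determinant criterion and yields precisely the stated degeneracy locus $t_i=\pm2$, with the ``tensor product of Frobenius algebras is Frobenius'' remark serving as the same conceptual shortcut the paper uses for $S^{-1}K_N(\Sigma_{0,3})$.
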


\section{The Torus}

Simple closed curves on $\Sigma_{1,0}$ correspond to $(\pm p,\pm q)\in \mathbb{Z}\times \mathbb{Z}$ where $p$ and $q$ are relatively prime. The correspondence comes from expressing an oriented representative of the curve as an element of the first homology of the torus with respect to a standard basis. Since the curves are not oriented, $(p,q)=(-p,-q)$.

The simple diagrams form a basis for $K_N(\Sigma_{1,0})$.
Since two disjoint nontrivial simple closed curves on a torus are parallel, we can identify the simple diagrams with pairs $(\pm p,\pm q)\in \mathbb{Z}\times \mathbb{Z}$.  The ordered pair $(0,0)$ corresponds to the empty skein. The number of components of the simple diagram $(p,q)$ is the greatest common divisor of $(p,q)$ and the diagram consists of $d$ copies of the curve $(p/d/q/d)$.

Once again, the basis of simple diagrams  can be replaced by the basis of simple closed curves threaded with Tchebychev polynomials of the first type. For $(p,q)\neq (0,0)$ with $gcd(p,q) = 1$, we denote $(p,q)$, the $(p,q)$-curve on the torus. For $p,q$ with $gcd(p,q) = d$, we define 
\[(p,q)_T = T_d \big( \big( \frac{p}{d}, \frac{q}{d} \big) \big).\]
Finally, $(0,0)_T$ is twice the empty skein.
Since these skeins have exactly the simple diagrams as their ``lead'' term they form a basis for $K_N(\Sigma_{1,0})$. Multiplication in $K_N(\Sigma_{1,0})$ has a conveniently simple formula with respect to this basis.

\begin{theorem}
(Product to Sum Formula in $K_N(\Sigma_{1,0})$ \cite{B}). For any $p, q, r, s \in \mathbb{Z}$, one has 

\begin{center}
$(p,q)_T \ast (r,s)_T = A^{\begin{vmatrix} p& q \\ r& s \end{vmatrix}}(p+r,q+s)_T+A^{-\begin{vmatrix} p & q \\ r& s \end{vmatrix}}(p-r,q-s)_T.$
\end{center}

\end{theorem}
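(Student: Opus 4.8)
The plan is to separate the one piece of genuine topology---how two transverse simple closed curves combine under the skein relations---from the bookkeeping required to thread with Tchebychev polynomials, and to organize the bookkeeping as an induction on the intersection number $D=\begin{vmatrix} p & q \\ r & s\end{vmatrix}=ps-qr$.

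First I would assemble two structural tools. The mapping class group of the torus is $SL_2(\mathbb{Z})$, and each $M=\begin{pmatrix} a & b\\ c& d\end{pmatrix}$ is realized by a homeomorphism, hence induces an algebra automorphism $\Phi_M$ of $K_N(\Sigma_{1,0})$ with $\Phi_M((p,q)_T)=(ap+bq,cp+dq)_T$. Since $M\in SL_2(\mathbb{Z})$ preserves the intersection form, $\Phi_M$ leaves $D$ unchanged, so the asserted identity holds for a pair as soon as it holds anywhere in its $SL_2(\mathbb{Z})$-orbit. The second tool is the parallel case $D=0$: here $(p,q)$ and $(r,s)$ are integer multiples $au$ and $cu$ of a common primitive class $u$, the two skeins are disjoint parallel families, and Proposition \ref{mult} gives $(au)_T\ast(cu)_T=((a+c)u)_T+(|a-c|u)_T=(p+r,q+s)_T+(p-r,q-s)_T$, which is exactly the claim with $A^{0}=1$.

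The geometric base case is the primitive--primitive one, $\gcd(p,q)=\gcd(r,s)=1$. Using $\Phi_M$ I would normalize $(p,q)$ to $(1,0)$, so that the companion curve becomes some $(m,n)$ with $n=D$, and I would realize $(1,0)$ and $(m,n)$ as straight lines on the flat torus meeting transversally in exactly $|n|$ equally signed points. Expanding each crossing into its $A$- and $A^{-1}$-smoothings produces a state sum with $2^{|n|}$ terms; the heart of the argument is to show that only the two coherent states survive, the all-$A$ state reconnecting the lines into $(m+1,n)$ with weight $A^{n}$ and the all-$A^{-1}$ state into $(m-1,n)$ with weight $A^{-n}$, while every mixed state either carries a contractible loop, removed at the cost of $-(A^{2}+A^{-2})$ by the second skein relation, or cancels against a partner state. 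Transporting the resulting identity $(1,0)\ast(m,n)=A^{n}(m+1,n)_T+A^{-n}(m-1,n)_T$ back through $SL_2(\mathbb{Z})$ yields the formula for every pair of primitive curves.

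Finally I would remove the coprimality hypotheses by inducting on $|D|=|ps-qr|$. If either factor is imprimitive, say $(p,q)=au$ with $a=\gcd(p,q)\ge 2$ and $u$ primitive, I would peel it using the Tchebychev recursion $(au)_T=u\ast((a-1)u)_T-((a-2)u)_T$, so that
\[(au)_T\ast(r,s)_T=u\ast\bigl(((a-1)u)_T\ast(r,s)_T\bigr)-((a-2)u)_T\ast(r,s)_T.\]
Writing $d=\det(u,(r,s))$ so that $|D|=a\lvert d\rvert$, each product appearing on the right has intersection number of absolute value $(a-1)\lvert d\rvert$, $(a-2)\lvert d\rvert$, or $\lvert d\rvert$, all strictly smaller than $|D|$ (here $d\neq 0$ since $|D|\ge 1$); the induction hypothesis applies to each, and a short computation with Proposition \ref{mult} shows the intermediate $(a-2)$-terms cancel, leaving precisely $A^{D}(p+r,q+s)_T+A^{-D}(p-r,q-s)_T$. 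When both factors are primitive the induction bottoms out in the geometric base case above. The main obstacle is exactly that base case: proving that the $|n|$-fold crossing resolution collapses to its two coherent states. This is where all of the topology lives, and I expect it to require a careful global argument---tracking how partial smoothings create trivial loops or matched cancelling pairs---rather than the purely local manipulations that suffice everywhere else.
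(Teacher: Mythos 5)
First, context for the comparison: the paper does not prove this theorem at all --- it is imported verbatim from Frohman--Gelca \cite{B} --- so what you have written is an attempt to reprove that result from scratch. Your reduction scaffolding is correct, and I checked it: the $SL_2(\mathbb{Z})$ normalization is legitimate (orientation-preserving homeomorphisms induce algebra automorphisms commuting with threading), the $D=0$ case follows from Proposition \ref{mult} since parallel families multiply by disjoint union, and the Chebyshev-peeling induction works as claimed --- expanding $(au)_T \ast (r,s)_T = u \ast (((a-1)u)_T \ast (r,s)_T) - ((a-2)u)_T \ast (r,s)_T$ and applying the inductive hypothesis to the three smaller-determinant products does cancel the $(a-2)$-terms and assembles the exponents into $A^{\pm ad}$. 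So your proposal correctly reduces the theorem to the primitive--primitive case.

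But that base case is a genuine gap, and not a deferrable detail: it is the entire content of the theorem, and the collapse you describe is not what actually happens. The two coherent states do not produce $(m\pm 1,n)_T$; they produce $\gcd(m\pm 1,n)$ disjoint parallel copies of a primitive curve, i.e.\ a \emph{power} of that curve in the skein algebra, which differs from the Chebyshev-threaded class by lower-order terms. Concretely, for $(1,0)\ast(1,3)$ (determinant $3$): the all-$A$ state is the single curve $(2,3)_T$, but the all-$A^{-1}$ state is three parallel copies of $(0,1)$, which as a skein is $(0,1)^3 = (0,3)_T + 3\,(0,1)$, not $(0,3)_T$. For the theorem to hold, the six mixed states must therefore sum to exactly $-3A^{-3}(0,1)$: they neither cancel in pairs nor evaporate after removing trivial loops; after loop removal they leave precisely the nonzero residue that converts the power $(0,1)^3$ into $T_3((0,1))$. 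Your dichotomy --- each mixed state ``carries a contractible loop\ldots or cancels against a partner state,'' hence ``only the two coherent states survive'' --- cannot produce such a residue, so the base-case argument as outlined would fail, not merely remain incomplete. Any correct proof must organize this interplay between the mixed states and the Chebyshev corrections (this is exactly the ``miraculous cancellation'' that makes $T_d$-threading the right normalization), which is why the published proof in \cite{B} proceeds by a more intricate induction that only ever resolves a small number of crossings at a time, rather than by a one-shot global analysis of the $2^{|n|}$-term state sum.
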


\begin{proposition} The image of the threading map $\chi(\Sigma_{1,0})$ is exactly the span of $(Np,Nq)_T$ where $p,q$ vary over $\mathbb{Z}$. It is exactly the center of $K_N(\Sigma_{1,0})$. \end{proposition}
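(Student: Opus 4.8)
The plan is to prove the two claims separately. First I would compute $\tau$ on the basis $\{(p,q)_T\}$. Let $\gamma=(p_0,q_0)$ be a primitive curve; the subalgebra of $K_1(\Sigma_{1,0})$ it generates is the polynomial ring $\mathbb{C}[\gamma]$, in which $(kp_0,kq_0)_T=T_k(\gamma)$ by definition. Since threading a knot with $T_N$ means $\tau(\gamma)=T_N(\gamma)=(Np_0,Nq_0)_T$, and $\tau$ is an algebra homomorphism by the Bonahon--Wong theorem, a general class $(p,q)_T=T_d(\gamma)$ with $d=\gcd(p,q)$ satisfies $\tau((p,q)_T)=T_d(\tau(\gamma))=T_d(T_N(\gamma))$, which Proposition \ref{mult} identifies with $T_{dN}(\gamma)=(Np,Nq)_T$. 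As $(p,q)$ runs over $\mathbb{Z}^2$ the vectors $(Np,Nq)$ fill out $N\mathbb{Z}\times N\mathbb{Z}$, so the image of $\tau$ --- the span of the images of a basis --- is exactly the span of the $(Np,Nq)_T$.

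Next I would extract the commutation rule from the Product to Sum Formula. Subtracting the two orders of multiplication and using $(r-p,s-q)_T=(p-r,q-s)_T$ gives
\[(p,q)_T\ast(r,s)_T-(r,s)_T\ast(p,q)_T=(A^{D}-A^{-D})\big[(p+r,q+s)_T-(p-r,q-s)_T\big],\qquad D=ps-qr.\]
Because $A=e^{\pi{\bf i}/N}$, the scalar $A^{D}-A^{-D}=2{\bf i}\sin(\pi D/N)$ vanishes exactly when $N\mid D$. In particular $(Np,Nq)_T$ commutes with every $(r,s)_T$, since $N$ divides $N(ps-qr)$; hence the span of the $(Np,Nq)_T$ is central, giving $\chi(\Sigma_{1,0})\subseteq Z(K_N(\Sigma_{1,0}))$ (this also follows from the centrality clause of the Bonahon--Wong theorem).

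The real content is the reverse inclusion. Let $\alpha=\sum c_{p,q}(p,q)_T$ be central. Commuting with $(0,1)_T$ makes $D=p$, and the commutator sends $(p,q)_T$ only to $(p,q\pm1)_T$, keeping the first coordinate fixed. For $N\nmid p$ we have $p\neq0$, so the scalar $A^{p}-A^{-p}$ is nonzero and the skeins in the column $\{(p,q)_T: q\in\mathbb{Z}\}$ are pairwise distinct; collecting coefficients, $[\alpha,(0,1)_T]=0$ forces $c_{p,q-1}=c_{p,q+1}$ for every $q$. Since only finitely many $c_{p,q}$ are nonzero, this recursion yields $c_{p,q}=0$ whenever $N\nmid p$. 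Commuting instead with $(1,0)_T$ makes $D=-q$, and the identical argument along rows gives $c_{p,q}=0$ whenever $N\nmid q$. Thus $c_{p,q}=0$ unless $N\mid p$ and $N\mid q$, so $\alpha$ lies in the span of the $(Np,Nq)_T$ and the two spans coincide.

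The only delicate point, and the step I would be most careful about, is the identification $(p,q)_T=(-p,-q)_T$: a priori it could make different columns contribute to the same basis element and produce spurious cancellation in the recursion. This is exactly why I restrict to columns (and rows) indexed by $p$ (resp.\ $q$) not divisible by $N$, where the index is nonzero; there a column and its mirror image are disjoint, and the pairing up of a column with its negative only multiplies each coefficient equation by a harmless nonzero constant, leaving the recursion $c_{p,q-1}=c_{p,q+1}$ intact. With that checked, finiteness of the support closes the argument.
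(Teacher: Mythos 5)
Your proof is correct and takes essentially the same route as the paper: the image of $\tau$ is identified exactly as in the annulus case (via $T_d\circ T_N=T_{dN}$ applied to each primitive curve), and the center is determined using the product-to-sum formula to show that any element with a nonzero coefficient on some $(p,q)_T$ with $N\nmid p$ or $N\nmid q$ fails to commute with $(0,1)_T$ or $(1,0)_T$. The paper's own proof is only a two-sentence sketch of precisely this argument, so your write-up --- including the commutator identity, the finite-support recursion, and the care taken with the identification $(p,q)_T=(-p,-q)_T$ --- is a faithful completion of it rather than a different method.
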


\proof The characterization of the image of the threading map is the same for the annulus.  Using the product to sum formula you can show that if some $\alpha_{p,q}\neq 0$ and $p$ and $q$ are not both divisible by $N$ then
$\sum_{p,q}\alpha_{p,q}(p,q)\in K_N(\Sigma_{1,0})$ fails to commute with some $(r,s)$. \qed

\begin{theorem}\label{generators}
$K_N(\Sigma_{1,0})$ is finitely generated over $\chi(\Sigma_{1,0})$.
A spanning set $\mathcal{B}$ consists of $\{(a,b)_T\}$ so that $(a,b) \in (\{0\} \times \{0,1,..., N-1\}) \cup (\{1,..., N-1\} \times \{-\frac{N-1}{2},.., 0,..., N-1\}) \cup (\{N\} \times \{1,..., \frac{N-1}{2}\}) \cup (\{1,..., \frac{N-1}{2}\} \times \{N\})$.

\end{theorem}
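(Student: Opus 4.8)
The plan is to imitate the one–variable reduction used for the annulus, now on the lattice of $(p,q)$–curves, with multiplication by central generators playing the role that $T_{aN}(x)$ played before. The engine of the whole argument is a single congruence. For any integers $a,b,r,s$ the determinant appearing in the Product to Sum Formula is $N(as-br)$, and since $A=e^{\pi\mathbf i/N}$ satisfies $A^{N}=-1$ we get $A^{\pm N(as-br)}=(-1)^{as-br}$, so both coefficients collapse to the same sign:
\[
(Na,Nb)_T\ast(r,s)_T=(-1)^{as-br}\big[(Na+r,Nb+s)_T+(Na-r,Nb-s)_T\big].
\]
Because $(Na,Nb)_T\in\chi(\Sigma_{1,0})$ is central, the left side is a central multiple of the single curve $(r,s)_T$. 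Rearranging yields the basic reduction rule
\[
(Na+r,Nb+s)_T\equiv-(Na-r,Nb-s)_T \pmod{\chi(\Sigma_{1,0})\cdot(r,s)_T},
\]
that is, modulo a central multiple of one smaller curve a skein may be replaced by minus its image under the point reflection $(p,q)\mapsto(2Na-p,2Nb-q)$ through the lattice point $(Na,Nb)$. Together with the built–in identification $(p,q)=(-p,-q)$, these reflections generate a group of symmetries of $\mathbb Z^{2}$ whose rotation centers lie on $N\mathbb Z^{2}$, and the set $\mathcal B$ should be recognized as a bounded set of orbit representatives for this action.

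With this rule I would reduce an arbitrary $(p,q)_T$ in two phases, inducting on $|p|+|q|$. First I reduce the first coordinate: reflecting through a point $(Na,0)$ replaces $(p,q)_T$ by $\pm(2Na-p,q)_T$ together with a central multiple of $(p-Na,q)_T$, both of which have strictly smaller first coordinate once $|p|>N$; iterating, and using $(p,q)=(-p,-q)$ to keep the first coordinate nonnegative, confines it to $\{0,1,\dots,N\}$. Second, holding the first coordinate fixed, I reduce the second coordinate by reflecting through points $(0,Nb)$, whose error terms are central multiples of curves with the same first coordinate but strictly smaller absolute second coordinate. At each step every error term is a central multiple of a curve with strictly smaller $|p|+|q|$, hence by the inductive hypothesis lies in the $\chi(\Sigma_{1,0})$–span of $\mathcal B$; since $\chi(\Sigma_{1,0})$ is central, central multiples of such terms stay in that span, and the induction closes. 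This already proves finite generation: any bounded region of representatives is finite, so $K_N(\Sigma_{1,0})$ is finitely generated over $\chi(\Sigma_{1,0})$.

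The content of the theorem is the \emph{precise} region $\mathcal B$, and this is where the main obstacle lies. The column $p=0$ behaves exactly like the annulus, since $(0,q)_T$ is $T_{q}$ applied to the $(0,1)$–curve, so the one–variable argument of the previous section confines it to $\{0\}\times\{0,\dots,N-1\}$. The genuinely delicate points are the boundary lines $p=N$ and $q=N$ and the asymmetric interior range $\{-\tfrac{N-1}{2},\dots,N-1\}$. Along the boundary lines the relevant reflections preserve the line setwise, and at their fixed points the sign in the reduction rule specializes to $(\,\cdot\,)_T\equiv-(\,\cdot\,)_T$, forcing $2(\,\cdot\,)_T$ into lower order; these self–reductions, combined with the collapse of every central curve $(Na,Nb)_T$ onto the unit, are what truncate the surviving ranges. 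Verifying that the reflections, the antipodal identification, and these fixed–point self–reductions conspire to leave precisely the listed indices — neither more nor fewer — is the heart of the proof and the step I expect to be hardest to pin down; it is pure bookkeeping of a fundamental domain for the generated symmetry group, and it is exactly the place where the half–ranges $\{1,\dots,\tfrac{N-1}{2}\}$ must be extracted and checked against the explicit statement of $\mathcal B$.
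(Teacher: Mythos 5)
Your reduction engine is correct and is exactly Step 1 of the paper's proof: since $A^{N}=-1$, multiplication by a central curve collapses to $(Na,Nb)_T\ast(r,s)_T=(-1)^{as-br}\bigl[(Na+r,Nb+s)_T+(Na-r,Nb-s)_T\bigr]$, and your induction on $|p|+|q|$ does confine everything, with central coefficients, to a bounded region --- essentially the paper's intermediate set $\mathcal{A}=\{0\}\times\{0\}\cup\{0,N\}\times\{1,\ldots,N-1\}\cup\{1,\ldots,N-1\}\times\{-N,\ldots,N\}$. That much proves finite generation.

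The gap is your second phase. The set $\mathcal{B}$ is \emph{not} a fundamental domain for the symmetry group your reflections generate, so no bookkeeping of that group action can finish the proof. The group generated by the point reflections through $N\mathbb{Z}^2$ and by $(p,q)\mapsto(-p,-q)$ is $\{x\mapsto\pm x+2Nv:\ v\in\mathbb{Z}^2\}$, which has $2N^2+2$ orbits on $\mathbb{Z}^2$, whereas $\mathcal{B}$ has fewer elements (for $N=5$: $52$ orbits versus $|\mathcal{B}|=37$). Concretely, the orbit of $(N,b)$ with $\tfrac{N+1}{2}\le b\le N-1$ consists of the points $(\pm N+2Nm,\pm b+2Nn)$ and misses $\mathcal{B}$ entirely. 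A single reduction step only ever replaces a curve by minus another curve \emph{in its orbit} plus a central multiple of a third curve, and the only orbit elements of minimal weight are $(N,\pm b)$; so your directed induction can produce nothing beyond the circular relation $(N,b)_T\equiv-(N,-b)_T$ modulo lower terms, and it stalls. (The ``fixed-point self-reduction'' does not rescue this: the fixed points of the reflections are the lattice points $N\mathbb{Z}^2$ themselves, which are already central, not the boundary curves at issue.) What is actually needed --- and what constitutes Step 2 of the paper's proof --- is to use several product-to-sum relations \emph{simultaneously}, passing through curves of \emph{higher} weight, and solve the resulting linear system so that the out-of-range curves cancel. Multiplying by the three central elements $(N,N)_T$, $(0,N)_T$, $(N,0)_T$ gives three relations among the three unknowns $(N,b)_T$, $(N,-b)_T$, $(N,2N-b)_T$ whose coefficient matrix has determinant $\pm 2$; solving yields the paper's identity
\begin{align*}
(N,b)_T &= \tfrac{1}{2}\bigl((-1)^{b+1}(N,N)_T \ast (0,N-b)_T\bigr)
+ \tfrac{1}{2}\bigl((0,N)_T \ast (N,N-b)_T + (-1)^{b}(N,0)_T \ast (0, b)_T\bigr),
\end{align*}
whose partners $(0,N-b)_T$, $(N,N-b)_T$, $(0,b)_T$ all lie in $\mathcal{B}$, and similarly for $(a,N)_T$ and $(a,-b)_T$. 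The factor $\tfrac12$ is the tell that these identities are not compositions of one-at-a-time reductions; without them your argument proves only that the larger set $\mathcal{A}$ spans, not the stated $\mathcal{B}$.
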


The set $\mathcal{B}$ is a little complicated. Below is a diagram that pictorially represents the set when $N=5$. The dots correpond to points in $\mathbb{Z}\times \mathbb{Z}$ that belong to $\mathcal{B}$. The lowest point in the leftmost column is $(0,0)$.  

\vspace{.1in}

\begin{center}\includegraphics{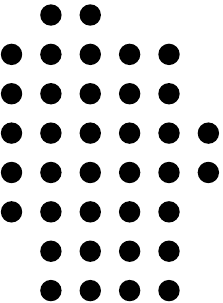}\end{center}

\proof  {\em Theorem \ref{generators}} The proof is in two steps. Let $\mathcal{A}=\{(p,q)_T\}$ where \[ (p,q)\in \{0\} \times\{0\} \cup \{0,N\} \times \{1,...N-1\} \cup \{1,..., N-1\} \times \{-N,...,0,..., N\}.\] First we show that $\mathcal{A}$ spans $K_N(\Sigma_{1,0})$. Next we show that $\mathcal{A}$ lies in the span of $\mathcal{B}$.

{\bf Step 1.} We want to show that as a module over $\chi(\Sigma_{1,0})$, $K_N(\Sigma_{1,0})$ is spanned by $(a,b)_T \in \mathcal{A}$. That is to say, any $(p,q)_T$ can be written as a $\chi(\Sigma_{1,0})$ linear combination of elements of $\mathcal{A}$.  
Let $(p,q)_T \in K_N(\Sigma_{1,0})$. For simplicity we work with pairs $(p,q)$ with $p\geq 0$, which we can do because $(p,q)=(-p,-q)$ as a simple diagram. Suppose $p \neq Nk$ for $k \in \mathbb{N}$ and write $p$ as $p=Np_1+a_1$ for some $0< a_1\leq N-1$. Assume that $p_1>0$. By the product to sum formula \[(p,q)_T = A^{-Np_1q}(Np_1,0)_T \ast (a_1,q)_T - A^{-2Np_1q}(Np_1-a_1,-q).\] Notice that $|Np_1-a_1| < |p|$, hence recursively applying this identity yields a linear combination of $(r,s)_T$ with coefficients in $\chi(\Sigma_{1,0})$ so that the $r$ lie in $\{0,1,\ldots N-1\}$.  To simplify the case $(p,q)_T$ where $p =Nk$ for some $k\geq 2$, use the product to sum formula with 
\[ (p,q)_T = (-1)^{-(k-1)q} (N(k-1), 0)_T \ast (N,q)_T- (-1)^{-2(k-1)q}(N(k-2),-q)_T\] to reduce to a linear combination of $(r,s)_T$ where $r\in \{0,N\}$. Thus $K_N(\Sigma_{1,0})$ is spanned over $\chi(\Sigma_{1,0})$ by $(r,s)_T$ with $r\in \{0,1,\ldots, N\}$.

Reduce the second entries in the same way, however now we must deal with negative integers. Suppose $q \neq Nm$ for $m \in \mathbb{Z}-\{0\}$ is non-negative and write $q$ modulo $N$, $q=Nq_1+b_1$ for some $0<b_1 \leq N-1$. If $q<0$ induct on $|q|$. As in the case of the first entry this process terminates in finitely many steps, leaving a finite linear combination of $(r,s)_T$ where $0\leq r \leq N$, and $-N\leq s \leq N$. In the case where $p = N$ reduce  $q$, to get a linear combination of $(r,s)_T$ so that $r=\{0,N\}$ and $1\leq s\leq N-1$.  

At this point it is clear that, \[\mathcal{A} = \{0\} \times\{0\} \cup \{0,N\} \times \{1,...N-1\} \cup \{1,..., N-1\} \times \{-N,...,0,..., N\}\] spans $K_N(\Sigma_{1,0})$. 

{\bf Step 2.} We finish the proof by showing $\mathcal{A}$ lies in the span of $\mathcal{B}$.
Notice $\mathcal{B}$ doesn't contain $(k, -N)$ for $1 \leq k \leq N-1$ but $\mathcal{A}$ does. By the product to sum formula these can be rewritten as elements in $\mathcal{B}$ in the following way \[(k,-N)_T = A^{kN}(0,N)_T \ast (k,0)_T - A^{-2kN}(k,N).\] 
This same argument gets rid of $(0,-k)$ and $(N,-k)$. To rewrite $(a,-b)_T \in  \{1,... N-1\} \times \{\frac{N-1}{2},..., N-1\}$ we use the following relation 

\begin{align*}
(a,-b)_T &= \frac{1}{2} ((-1)^{a+b+1}(N,N)_T \ast (N-a,b-N)_T)\\
&+ \frac{1}{2} ((-1)^{a}(0,N)_T \ast (a, N-b)_T + (-1)^{b}(N,0)_T \ast (N-a, b)_T).
\end{align*}
We can also rewrite $(a,N)_T$ for $a \in \{\frac{N+1}{2},..., N-1\}$ as 
\begin{align*}
(a,N)_T &= \frac{1}{2} ((-1)^{a+1}(N,N)_T \ast (N-a,0)_T) \\
&+ \frac{1}{2} ((-1)^{a}(0,N)_T \ast (a, 0)_T+ (N,0)_T \ast (N-a, N)_T).
 \end{align*} 
And the last case, $(N,b)_T$ for $b \in \{\frac{N+1}{2},..., N-1\}$ can be rewritten as 
\begin{align*}
(N,b)_T &= \frac{1}{2} ((-1)^{b+1}(N,N)_T \ast (0,N-b)_T )\\
&+ \frac{1}{2} ((0,N)_T \ast (N,N-b)_T + (-1)^{b}(N,0)_T \ast (0, b)_T).
 \end{align*}
 This proves that any element of $\mathcal{A}$ not in $\mathcal{B}$ can be obtained by a linear combination of elements of $\mathcal{B}$, hence $\mathcal{B}$ is the spanning set of $\mathcal{A}$ and thus of $K_N(\Sigma_{1,0})$.

\endproof

\begin{lemma}
Let $\mathcal{B^{\prime}}= \{(a,b)_T: 0 \leq a \leq N-1, 0\leq b\leq N-1\}$. The elements in $\mathcal{B}^{\prime}$ are linearly independent.
\end{lemma}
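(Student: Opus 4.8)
The plan is to mimic the rank-$N$ freeness argument given for the annulus, but organized around the residue of exponents modulo $N$ rather than polynomial degree. Suppose $\sum_{0\le a,b\le N-1}\chi_{a,b}\,(a,b)_T=0$ with each $\chi_{a,b}\in\chi(\Sigma_{1,0})$, and recall that $\chi(\Sigma_{1,0})$ is spanned by the central skeins $(Np,Nq)_T$. Since $A^N=e^{\pi\mathbf i}=-1$, the Product to Sum Formula collapses the determinant weights to signs: for central generators one has $(Np,Nq)_T\ast(a,b)_T=(-1)^{pb-qa}\big[(Np+a,Nq+b)_T+(Np-a,Nq-b)_T\big]$. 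The two resulting simple diagrams have exponents $\equiv(a,b)$ and $\equiv(-a,-b)$ modulo $N$.

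First I would set up the decomposition by residue. Reducing exponents modulo $N$ and remembering the torus identification $(p,q)=(-p,-q)$, group the simple-diagram basis by the $\pm$-orbit of $(p\bmod N,q\bmod N)\in(\mathbb Z/N)^2$. Because $N$ is odd, $-1$ acts on $(\mathbb Z/N)^2$ with the single fixed point $(0,0)$ and all other orbits of size two. This writes $K_N(\Sigma_{1,0})$ as a direct sum $\bigoplus_c V_c$ of $\chi(\Sigma_{1,0})$-submodules indexed by these orbits: the displayed product shows multiplication by a central generator keeps a skein inside its own $V_c$. The generating set $\mathcal{B}'$ distributes cleanly: $(0,0)_T$ is the only member lying in $V_{(0,0)}$, while each nontrivial orbit contains exactly the two members $(a,b)_T$ and $(N-a,N-b)_T$. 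Projecting the supposed relation onto each $V_c$ therefore splits it into independent sub-relations, one per orbit.

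The orbit of $(0,0)$ is immediate: $(0,0)_T=2$ is a unit, so its coefficient vanishes. For a nontrivial orbit the task reduces to showing the two skeins $(a,b)_T,(N-a,N-b)_T$ are independent over $\chi(\Sigma_{1,0})$, a rank-two problem inside $V_c$. Here I would use the integer points of residue exactly $(a,b)$ as a genuine basis $e_{(m,n)}:=(Nm+a,Nn+b)_T$ of $V_c$ (for $(a,b)\ne(0,0)$ each $\pm$-class of exponents has a unique such representative). Expanding $\chi_{a,b}(a,b)_T$ and $\chi_{N-a,N-b}(N-a,N-b)_T$ by the Product to Sum Formula and using negation to rewrite the residue-$(-a,-b)$ terms as $e_{(m,n)}$'s, I would match the coefficient of each $e_{(j,k)}$. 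Writing $\gamma$ and $\delta$ for the (finitely supported, and symmetric since $(Nm,Nn)_T=(N(-m),N(-n))_T$) coefficient functions of $\chi_{a,b}$ and $\chi_{N-a,N-b}$, the sign bookkeeping collapses each coefficient equation to $\gamma_{j,k}=\pm\,\delta_{j+1,k+1}$. Feeding in the symmetry $\delta_{m,n}=\delta_{-m,-n}$ turns this into the translation invariance $\delta_{m,n}=\delta_{m-2,n-2}$; since $\delta$ has finite support it must vanish, and then so does $\gamma$. Hence $\chi_{a,b}=\chi_{N-a,N-b}=0$, completing every block.

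The main obstacle is exactly that the torus offers no honest monomial grading: the $\pm$-identification together with the two-term Product to Sum Formula means a single central multiplication lands in two residue slots at once, so the clean ``highest-degree-term'' cancellation used for the annulus does not transfer. The residue decomposition is what tames this, reducing the full $N^2$-variable tangle to rank-two blocks; but inside a block the two interleaved families genuinely mix, and it is the translation-invariance-versus-finite-support trick, rather than one leading term, that forces the coefficients down. The boundary cases $a=0$ or $b=0$ and the doubled coefficients at fixed points are routine variants of this computation. I expect the sign and phase simplification feeding the coefficient matching to be the most error-prone step; a cleaner but less self-contained alternative is to push everything through the Frohman--Gelca embedding of $K_N(\Sigma_{1,0})$ into the quantum torus, where the monomials $e_{(r,s)}$ form an honest basis and the independence is read off directly.
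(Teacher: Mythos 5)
Your proof is correct, and it takes a genuinely different route from the paper's. Both arguments start identically: expand each $\chi_{a,b}$ over the central generators $(Np,Nq)_T$ and use $A^N=-1$ to collapse the product-to-sum coefficients to signs. From there, the paper runs an extremal argument: it forms the set $\mathcal{I}$ of lattice points $(Np_i\pm a,Nq_i\pm b)$ occurring with nonzero coefficient, picks the point with maximal first (then maximal second) coordinate, and shows by a two-case analysis --- made delicate precisely by the identification $(p,q)=(-p,-q)$ --- that this maximal point is produced by exactly one term of the sum, forcing that coefficient to vanish. So the annulus-style leading-term idea does transfer, contrary to your stated obstacle; it just requires the case analysis you set out to avoid. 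Your argument replaces the extremal choice with structure: decomposing $K_N(\Sigma_{1,0})$ into $\chi(\Sigma_{1,0})$-submodules $V_c$ indexed by $\pm$-orbits of residues in $(\mathbb{Z}/N)^2$ (valid because $N$ odd makes $(0,0)$ the unique fixed point of negation) splits the $N^2$-term relation into independent relations involving at most two members of $\mathcal{B}'$, and inside each block the $\pm$-identification is neutralized by the clean fact that a finitely supported function invariant under a nontrivial translation must vanish. I checked the step you flagged as error-prone: the coefficient equation comes out as $\gamma_{j,k}=-(-1)^{a-b}(-1)^{j-k}\delta_{j+1,k+1}$, so the sign genuinely varies with $j-k$; but $(-1)^{j-k}$ is itself invariant under $(j,k)\mapsto(-j,-k)$, so your symmetry step still produces $\delta_{m,n}=\delta_{m-2,n-2}$ (and $\delta_{m,n}=\delta_{m-2,n}$ or $\delta_{m,n}=\delta_{m,n-2}$ in the boundary cases), and finite support finishes it. As for what each approach buys: the paper's proof is shorter to state but fragile exactly where the $\pm$-identification bites; yours costs more setup but is more robust and more informative --- your $V_c$ are the weight spaces of the center acting on $K_N(\Sigma_{1,0})$, and the same decomposition silently reappears later in the paper, where the trace pairing on $\mathcal{B}'$ is block-diagonal precisely with respect to your residue orbits. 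Your suggested alternative through the Frohman--Gelca quantum-torus embedding would also work, trading self-containedness for immediacy.
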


\begin{proof}
  
 Suppose that $\sum_{(a,b)_T\in \mathcal{B}'}\chi_{a,b}(a,b)_T=0$, where $\chi_{a,b}\in \chi(\Sigma_{1,0})$.   Using the characterization of $\chi(\Sigma_{1,0})$ as the span of $\{(Np,Nq)_T\}$ where $p,q\in \mathbb{Z}$,  \[ \sum_{a,b} \bigg( \sum_{i} \alpha_{a,b,i}(Np_i,Nq_i)_T \bigg) (a,b)_T=0 \] where $a, b \in \{0,1,..., N-1\}$, only finitely many  $\alpha_{a,b,i}\in \mathbb{C}$ are nonzero,  and $p_i\geq 0$ for all $i$ and if $p_i=0$ then $q_i\geq 0$. Using the product to sum formula, \[ \sum_{a,b} \bigg( \sum_{i} (-1)^{p_ib+q_ia}\alpha_{a,b,i}((Np_i+a,Nq_i+b)_T +(Np_i-a,Nq_i-b)_T) \bigg)=0. \] Let \[\mathcal{I} = \{(p,q)_T|(p,q)_T=(Np_i\pm a,Nq_i\pm b) \ \mathrm{and} \ \alpha_{a,b,i}\neq 0\}.\]

Assume $\mathcal{I}\neq \emptyset$.
Choose $(p,q)_T\in \mathcal{I}$ so that $p$ is maximal, and among all pairs with $p$ maximal, $q$ is maximal. Notice $p\geq 0$.

{\bf Case 1.} Suppose $p=Np_i+a$ for some $(Np_i+a,Nq_i+b)_T\in \mathcal{I}$ and $a>0$. Note that  $p\neq Np_j-a'$ for some $(Np_j-a',Nq_j-b')_T$  because $p$ was maximal and $Np_j+a'\geq Np_j-a'$, hence $a'=a=0$. This means that the only elements of $\mathcal{I}$ with first coordinate $p$ are of the form $(Np_j+a',Nq_j+b')_T$. Among these the maximal second coordinate is of the form $Nq_i+b'$.  There is a homomorphism $r:\mathbb{Z}\times \mathbb{Z}\rightarrow \mathbb{Z}_N\times \mathbb{Z}_N$ obtained by taking residue classes modulo $N$. If $r(Np_i+a,Nq_i+b)=r(Np_j+a',Nq_j+b')$ where $a,b,a',b'\in \{0,\ldots,N-1\}$ then $a=a'$ and $b=b'$. From this we see that there is a unique $(p_iN+a,q_i+b)_T$ corresponding to the choice of $(p,q)_T$. Therefore $\alpha_{a,b,i}=0$. 

{\bf Case 2.} Suppose that $p=Np_i$,  let $q=Nq_i+b$ be the maximal second coordinate among the pairs $(Np_i,q)_T\in \mathcal{I}$. If $b=0$ we are done, there is no other way of producing another $(Np_i,Nq_i)_T$ in the sum above, so assume $b \neq 0$. Moreover suppose there is another $\alpha_{a',b',j}\neq 0$ so that $(p,q)_T$ was equal to $(Np_j+a',Nq_j+b')_T$ or $( Np_j-a',Nq_j-b')_T$.  Notice that if $(p,q)_T=(Np_j+a',Nq_j+b')_T$ then $a'=0$ and  $(0,b)_T=(0,b')_T$, contradicting our assumption that there was more than one. Hence $(p,q)_T=(Np_j-a',Nq_j-b')_T$. 
Once again $a'=0$. Since $Nq_j+b'>Nq_j-b'$ and $\alpha_{a',b',j}\neq 0$ we have contradicted the maximality of $q$. \qed

Hence the elements in $\mathcal{B}^{\prime}$ are linearly independent.

\end{proof}

The following equation relates a multiple of every element in $\mathcal{B}-\mathcal{B^{\prime}}$ to two elements in $\mathcal{B^{\prime}}$.

\begin{align}\label{quad}
 0 &= (-1)^p[(2N,N)_T - (0,N)_T] \ast (p,q)_T \\
 &+(-1)^q[-(N,2N)_T+(N,0)_T] \ast (N-p,N-q)_T \nonumber \\
 &+[-(2N,2N)_T+2] \ast (p,q-N)_T. \nonumber
\end{align}

\noindent For example if $(a,b)_T \in \mathcal{B}-\mathcal{B}^{\prime}$ with $b <0$, set $a=p$ and $b=q-N$ in the previous formula, if $(a,N)_T \in \mathcal{B}-\mathcal{B}^{\prime}$, set $a=p$ and $N=q$ and last if $(N,b)_T \in \mathcal{B}-\mathcal{B}^{\prime}$, set $N=N-p$ and $b=N-q$. 

Let $S = \chi(\Sigma_{1,0})-\{0\}$. Based on the construction presented earlier, the field of fractions $S^{-1}\chi(\Sigma_{1,0})$, is defined by $\chi(\Sigma_{1,0}) \times S$ with the following equivalence relation, $(z,s) \sim (z^{\prime}, s^{\prime})$ if $zs^{\prime}=z^{\prime} s$, where $z$ and $z^{\prime} \in \chi(\Sigma_{1,0})$ and $s$ and $s^{\prime} \in S$. Similarly we construct $S^{-1}K_N(\Sigma_{1,0})$.

Equation 5.1 under the image of $\tau$ then says that 
\begin{align*}
(p,q-N)_T &= \frac{(-1)^p((2N,N)_T - (0,N)_T) \ast (p,q)_T}{-(-(2N,2N)_T+2)} \\
&+\frac{(-1)^q(-(N,2N)_T+(N,0)_T) \ast (N-p,N-q)_T}{-(-(2N,2N)_T+2)} .
\end{align*} 
This relation is what makes the following proposition hold, expressing the image of elements of $\mathcal{B}-\mathcal{B^{\prime}}$ linear combinations of $\mathcal{B^{\prime}}$.

\begin{proposition}
$S^{-1}K_N(\Sigma_{1,0})$ is vector space of dimension $N^2$ over $S^{-1}\chi(\Sigma_{1,0})$.
\end{proposition}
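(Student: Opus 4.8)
The plan is to show that $\mathcal{B}' = \{(a,b)_T : 0 \le a \le N-1,\ 0 \le b \le N-1\}$ is a basis of $S^{-1}K_N(\Sigma_{1,0})$ over $S^{-1}\chi(\Sigma_{1,0})$; since $\mathcal{B}'$ has exactly $N^2$ elements, the dimension claim follows at once. First I would establish spanning. By Theorem \ref{generators} the finite set $\mathcal{B}$ spans $K_N(\Sigma_{1,0})$ over $\chi(\Sigma_{1,0})$, so after localizing, the image of $\mathcal{B}$ spans $S^{-1}K_N(\Sigma_{1,0})$ over $S^{-1}\chi(\Sigma_{1,0})$. It then suffices to express every element of $\mathcal{B}-\mathcal{B}'$ inside the $S^{-1}\chi(\Sigma_{1,0})$-span of $\mathcal{B}'$, which is precisely what Equation \ref{quad} provides. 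All the curves appearing as coefficients there, namely $(2N,2N)_T$, $(2N,N)_T$, $(0,N)_T$, $(N,2N)_T$, $(N,0)_T$ and the constant $2 = (0,0)_T$, lie in $\chi(\Sigma_{1,0})$, and the three combinations $-(2N,2N)_T + 2$, $(2N,N)_T - (0,N)_T$, and $-(N,2N)_T + (N,0)_T$ are differences of distinct basis elements of $\chi(\Sigma_{1,0})$, hence nonzero and so units in the field $S^{-1}\chi(\Sigma_{1,0})$. Running through the three substitutions listed after Equation \ref{quad} — taking $(p,q-N)$, $(p,q)$, or $(N-p,N-q)$ to be the offending element of $\mathcal{B}-\mathcal{B}'$ according to whether it has the form $(a,b)$ with $b<0$, $(a,N)$, or $(N,b)$ — one solves for that element by dividing through by the appropriate unit, after checking that the two remaining skeins in each relation lie in $\mathcal{B}'$. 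This shows $\mathcal{B}'$ spans.

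For independence, I would invoke the preceding Lemma, which already gives that $\mathcal{B}'$ is linearly independent over $\chi(\Sigma_{1,0})$. Since $\chi(\Sigma_{1,0})$ is an integral domain and $S^{-1}\chi(\Sigma_{1,0})$ is its field of fractions, any nontrivial $S^{-1}\chi(\Sigma_{1,0})$-linear dependence among the elements of $\mathcal{B}'$ can be multiplied through by a common denominator in $S$ to produce a nontrivial $\chi(\Sigma_{1,0})$-linear dependence, contradicting the Lemma. Hence $\mathcal{B}'$ remains independent after localization, so it is a basis and the dimension is $N^2$.

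The genuinely delicate point — and the reason the argument succeeds over the fraction field but not over $\chi(\Sigma_{1,0})$ itself — is the invertibility of the leading coefficients in Equation \ref{quad}. Over $\chi(\Sigma_{1,0})$ these elements are non-units, and the extra generators $\mathcal{B}-\mathcal{B}'$ genuinely cannot be eliminated; it is exactly the passage to $S^{-1}\chi(\Sigma_{1,0})$ that makes them redundant. The remaining parts, namely the clearing-of-denominators argument and the bookkeeping that the three substitutions exhaust $\mathcal{B}-\mathcal{B}'$ and that the companion skeins land in $\mathcal{B}'$, are routine.
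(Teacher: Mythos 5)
Your proposal is correct and takes essentially the same route as the paper's proof: spanning via Theorem \ref{generators} followed by elimination of $\mathcal{B}-\mathcal{B}'$ using Equation \ref{quad}, whose leading coefficients become units in $S^{-1}\chi(\Sigma_{1,0})$, and independence by reducing to the Lemma over $\chi(\Sigma_{1,0})$. You merely spell out two steps the paper leaves implicit --- the nonvanishing of the coefficients $-(2N,2N)_T+2$, $(2N,N)_T-(0,N)_T$, $-(N,2N)_T+(N,0)_T$ and the clearing-of-denominators argument for independence.
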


\begin{proof}

Let $\mathcal{D}^{\prime} = \{[(a,b)_T,1]: 0 \leq a \leq N-1, 0\leq b\leq N-1\}$. Let $[L,s] \in S^{-1}K_N(\Sigma_{1,0})$. By theorem 5.3 $L \in K_A(\Sigma_{1,0})$ can be written as \[\sum_{\beta \in \mathcal{B}} \chi_{\beta}\beta\] where $\chi_{\beta} \in \chi(\Sigma_{1,0})$. Because of equation 5.1 and the construction of $S^{-1}K_N(\Sigma_{1,0})$, \[[L,s] = [\sum_{k=0}^{N^2} \chi_k (a_k,b_k)_T,s] =  \sum_{k=0}^{N^2} [\chi_k (a_k,b_k)_T,s] = \sum_{k=0}^{N^2} [\chi_k,s] \ast [(a_k,b_k)_T,1]\] where $(a_k,b_k)$ are the elements of $\mathcal{B}^{\prime}$ ordered lexicographically. This shows that $S^{-1}K_N(\Sigma_{1,0})$ is spanned by $\mathcal{D^{\prime}}$ as an $S^{-1}\chi(\Sigma_{1,0})$-module. We showed in lemma 5.4 that $B^{\prime}$ is linearly independent from which it follows that $\mathcal{D^{\prime}}$ is linearly independent proving the proposition.

\end{proof}

Now that we have the previous proposition we can state the following.

\begin{proposition}
$K_N(\Sigma_{1,0})$ is not a free module over $\chi(\Sigma_{1,0})$ and has rank greater than $N^2$.
\end{proposition}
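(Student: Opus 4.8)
Throughout write $M = K_N(\Sigma_{1,0})$ and $R = \chi(\Sigma_{1,0})$, and read ``rank'' as the minimal number of generators $\mu(M)$ of $M$ as an $R$-module. By Proposition 5.6 the generic rank of $M$, namely $\dim_{S^{-1}R} S^{-1}M$, equals $N^2$. The plan is to deduce both assertions from the single statement that there is a place $\phi_0 \colon R \to \mathbb{C}$ whose fiber $K_N(\Sigma_{1,0})_{\phi_0} = M/(\ker\phi_0)M$ satisfies $\dim_{\mathbb{C}} K_N(\Sigma_{1,0})_{\phi_0} > N^2$. This suffices because, first, any generating set of $M$ reduces to a spanning set of the fiber, so $\mu(M) \geq \dim_{\mathbb{C}} K_N(\Sigma_{1,0})_{\phi_0} > N^2$; and second, were $M$ free it would be free of rank equal to its generic rank $N^2$, forcing every fiber to have dimension exactly $N^2$, which the chosen $\phi_0$ contradicts.

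To locate $\phi_0$ I would first narrow the search. The skein algebra $M$ is known to be an integral domain, so nonzero elements of $R$ act injectively and $M$ is torsion-free; consequently $M$ is free after localizing at any discrete valuation localization of $R$ (a finitely generated torsion-free module over a discrete valuation ring is free). Hence the locus where $M$ fails to be locally free, which is exactly where the fiber dimension jumps, has codimension at least two and so consists of special closed points of the character variety. These points must lie on the hypersurface $V\big((2N,2N)_T - 2\big)$: this is precisely where the coefficient $-(2N,2N)_T + 2$ of $(p,q-N)_T$ in Equation 5.1 vanishes, so that the reduction of a generator in $\mathcal{B}\setminus\mathcal{B}'$ to the basis $\mathcal{B}' = \{(a,b)_T : 0 \le a,b \le N-1\}$ used in Proposition 5.6 breaks down. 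I would take $\phi_0$ to be a maximally degenerate such place, and not the generic point of that hypersurface: writing $\phi_0\big((1,1)\big) = \mu + \mu^{-1}$, the condition $T_{2N}(\mu + \mu^{-1}) = 2$ forces $\mu$ to be a $2N$-th root of unity, the torus analogue of the degenerate values $z = \pm 2$ for the annulus at which one expects the specialization to swell.

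The heart of the proof is then the lower bound $\dim_{\mathbb{C}} K_N(\Sigma_{1,0})_{\phi_0} > N^2$, and this is the step I expect to be the main obstacle. The idea is that the defining relations of $M$ drop rank at $\phi_0$: each instance of Equation 5.1 loses its pivot, the coefficient of the generator in $\mathcal{B}\setminus\mathcal{B}'$, and degenerates into a relation among $\mathcal{B}'$-terms only, so it no longer trades a generator in $\mathcal{B}\setminus\mathcal{B}'$ for elements of $\mathcal{B}'$. One must show that no other skein relation, for instance an alternative application of the product-to-sum formula, compensates by re-expressing such a generator at $\phi_0$. Concretely I would try to exhibit $N^2 + 1$ explicitly independent skeins in the fiber. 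I emphasize that the naive choice of the images of $\mathcal{B}'$ together with one extra generator need not work: linear independence over $R$ (Lemma 5.4) does not in general descend to the fiber, since the images of $\mathcal{B}'$ can themselves become dependent at a degenerate point. Controlling the specialized relation module directly, or computing the multiplication table of $K_N(\Sigma_{1,0})_{\phi_0}$ in closed form, is where the real work lies.

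Finally, once a fiber of dimension at least $N^2 + 1$ is produced, the two reductions of the first paragraph close the argument: $\mu(M) \geq N^2 + 1$ gives the rank statement, and the mismatch with the generic rank $N^2$ rules out freeness. As a sanity check, upper semicontinuity of $\phi \mapsto \dim_{\mathbb{C}} M/(\ker\phi)M$ shows that a single such jump already witnesses that $M$ is not locally free, hence not free.
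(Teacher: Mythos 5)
Your reduction is sound as far as it goes: if there exists a place $\phi_0$ with $\dim_{\mathbb{C}}K_N(\Sigma_{1,0})_{\phi_0}>N^2$, then any generating set of $K_N(\Sigma_{1,0})$ surjects onto a spanning set of that fiber, so the minimal number of generators exceeds $N^2$; and since a free module would have rank equal to the generic rank $N^2$ of Proposition 5.6 and hence all fibers of dimension exactly $N^2$, freeness would also be ruled out. The genuine gap is that you never establish that such a place exists. You flag this yourself as ``the main obstacle'' and ``where the real work lies,'' but it is not a deferrable technicality: it is the entire mathematical content of the proposition. The preparatory material --- torsion-freeness, the codimension-two estimate for the non-locally-free locus, the observation that bad points should lie in the vanishing locus of $(2N,2N)_T-2$, the choice of a $2N$-th root of unity --- only narrows where to look; it produces no lower bound on any fiber dimension. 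Worse, as you correctly point out, independence of $\mathcal{B}'$ over $\chi(\Sigma_{1,0})$ need not descend to the fiber, so the natural candidate set of $N^2+1$ independent elements is not available, and bounding the fiber from below would require control of \emph{all} relations among the generators $\mathcal{B}$ (a presentation of the module), not just the family of relations in Equation 5.1. Nothing in the proposal supplies that, so what you have is a strategy, not a proof.

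For comparison, the paper's own argument never specializes at all. It works directly from Proposition 5.6 and Equation 5.1: each element of $\mathcal{B}-\mathcal{B}'$ has \emph{unique} coordinates with respect to the basis $\mathcal{B}'$ over the fraction field $S^{-1}\chi(\Sigma_{1,0})$, and those coordinates carry the denominator $2-(2N,2N)_T$, so they do not lie in $\chi(\Sigma_{1,0})$; the paper concludes that elements of $\mathcal{B}-\mathcal{B}'$ are not $\chi(\Sigma_{1,0})$-combinations of $\mathcal{B}'$, and from this infers both the rank statement and non-freeness. That route has the virtue of staying inside the module, with input (non-clearing denominators) that is concretely checkable; your route, had it been completed, would actually give finer information, since a jump in fiber dimension locates the precise points of the character variety where local freeness fails. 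But the fiber-jump computation is substantially harder than anything carried out in the paper, and in its absence your argument proves nothing.
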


\begin{proof}
This simply follows from the fact that the coefficients in the previous proposition are unique and the linear combinations that yield the elements of $\mathcal{B}-\mathcal{B}^{\prime}$ are not in the image of elements of $\mathcal{B}^{\prime}$. Hence the rank of $K_N(\Sigma_{1,0})$ over $\chi(\Sigma_{1,0})$ is greater than dimension of $\mathcal{B}^{\prime}$ and because of equation 5.1 it is not free.
\end{proof}

It will be helpful to have a collection of bases for $S^{-1}K_N(\Sigma_{1,0})$.

\begin{proposition} Let $\alpha,\beta$ be framed links in $\Sigma_{1,0}\times [0,1]$ coming from a pair of simple closed curves $a$ and $b$ on $\Sigma_{1,0}$ by using the blackboard framing, where $a$ and $b$ intersect one another transversely in a single point.  The set of skeins $\mathcal{C}=\{T_p(\alpha)T_q(\beta)\}$ where $p$ and $q$ range from $0$ to $N-1$, form a basis for $S^{-1}K_N(\Sigma_{1,0})$ as a vector space over $S^{-1}\chi(\Sigma_{1,0})$. \end{proposition}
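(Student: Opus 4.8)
The plan is to rewrite each member of $\mathcal{C}$ in the ``lattice basis'' $\{(a,b)_T\}$ using the product to sum formula, and then compare against the basis $\{(a,b)_T : 0\le a,b\le N-1\}$ of $S^{-1}K_N(\Sigma_{1,0})$ provided by the previous proposition. Since $\mathcal{C}$ has exactly $N^2$ elements and $\dim_{S^{-1}\chi(\Sigma_{1,0})}S^{-1}K_N(\Sigma_{1,0})=N^2$, it is enough to prove that $\mathcal{C}$ is linearly independent over $S^{-1}\chi(\Sigma_{1,0})$, equivalently that the change of basis matrix to $\{(a,b)_T\}$ is invertible over the function field.

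First I would reduce to the standard case $a=(1,0)$, $b=(0,1)$. Because $a$ and $b$ meet transversely once, the matrix with rows $a,b$ lies in $GL_2(\mathbb{Z})$; the product to sum formula, the description of $\chi(\Sigma_{1,0})$ as the span of the $(Np,Nq)_T$, and the threading map are all equivariant for the relabelling action of $SL_2(\mathbb{Z})$ on $\mathbb{Z}^2$ (the orientation reversing part being implemented by the symmetry $A\mapsto A^{-1}$), and such a relabelling carries bases to bases. Applying the one sending $(1,0),(0,1)$ to $a,b$ reduces to the standard pair, where the product to sum formula gives $T_p(\alpha)\ast T_q(\beta)=A^{pq}(p,q)_T+A^{-pq}(p,-q)_T$ for $0\le p,q\le N-1$.

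Next I would exploit a grading. Reduction of lattice points modulo $N$, together with the identification $(a,b)_T=(-a,-b)_T$, decomposes $K_N(\Sigma_{1,0})$ as a $\chi(\Sigma_{1,0})$-module into summands indexed by $\pm$-orbits in $\mathbb{Z}_N\times\mathbb{Z}_N$, and multiplication by $\chi(\Sigma_{1,0})$ preserves each summand. The skein $T_p(\alpha)\ast T_q(\beta)$ lives in the two summands indexed by $\pm(p,q)$ and $\pm(p,-q)$, so the change of basis from $\mathcal{C}$ to $\{(a,b)_T\}$ is block diagonal, with blocks indexed by the unordered pairs $\{\pm(p,q),\pm(p,-q)\}$. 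The block of $(0,0)$ is $1\times 1$; the blocks with $p=0$ or $q=0$ are $2\times 2$ and diagonal, because there $T_p(\alpha)\ast T_q(\beta)$ is simply twice a single $(a,b)_T$; and the remaining blocks, for $p,q\neq 0$, are $4\times 4$, pairing $T_p(\alpha)\ast T_q(\beta)$, $T_{p}(\alpha)\ast T_{N-q}(\beta)$, $T_{N-p}(\alpha)\ast T_q(\beta)$, $T_{N-p}(\alpha)\ast T_{N-q}(\beta)$ against $(p,q)_T$, $(p,N-q)_T$, $(N-p,q)_T$, $(N-p,N-q)_T$.

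The main work, and the main obstacle, is the $4\times 4$ blocks. Expanding the four products produces those four basis vectors together with the four lattice points $(p,-q)$, $(p,q-N)$, $(N-p,-q)$, $(N-p,q-N)$, each having a negative entry, which I would rewrite in the basis $\{(a,b)_T\}$ using equation \eqref{quad} and its images under $p\mapsto N-p$, $q\mapsto N-q$. Collecting the central coefficients $(2N,N)_T-(0,N)_T$, $-(N,2N)_T+(N,0)_T$, $(2N,2N)_T-2$ and the powers $A^{\pm pq}$ yields an explicit $4\times 4$ matrix over $S^{-1}\chi(\Sigma_{1,0})$, and the proposition reduces to checking its determinant is nonzero. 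Rather than expand this symbolically, the cleanest certificate of nonvanishing is to specialize at one place $\phi$ of the character variety off the zero locus of $(2N,2N)_T-2$: the determinant is an element of $S^{-1}\chi(\Sigma_{1,0})$, hence nonzero as soon as it is nonzero at a single such point, and at a generic character the four products are visibly independent. I expect the bookkeeping of the signs and of the powers of $A$ in assembling the $4\times 4$ block to be the only genuinely delicate point.
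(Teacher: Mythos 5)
Your overall architecture matches the paper's: reduce to the standard pair $(1,0),(0,1)$ by a homeomorphism of the torus, note that $\mathcal{C}$ has $N^2$ elements so only linear independence is needed, decompose the change of basis into a diagonal part and $4\times 4$ blocks indexed by $\{(p,q),(p,N-q),(N-p,q),(N-p,N-q)\}$, and eliminate the negative-entry points $(p,-q),(p,q-N),(N-p,-q),(N-p,q-N)$ with equation \eqref{quad}. (Your mod-$N$ grading justification of the block structure is, if anything, cleaner than the paper's.) The genuine gap is the last step, which is the heart of the proposition: your certificate that each $4\times 4$ block is nonsingular is ``at a generic character the four products are visibly independent.'' This is circular. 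Nonvanishing of the determinant at a generic point is \emph{equivalent} to the determinant being a nonzero element of $S^{-1}\chi(\Sigma_{1,0})$, which is exactly what has to be proved; you exhibit no point and perform no evaluation. Worse, the natural points at which independence would be ``visible'' do not exist: to make the off-diagonal entries vanish you would need $(2N,N)_T=(0,N)_T$ and $(N,2N)_T=(N,0)_T$, and on the cover $\Theta(\lambda,\mu)$ this forces $\lambda^{2N}=\mu^{2N}=1$, where the denominator $2-(2N,2N)_T$ vanishes as well, so the block entries are not even defined there. At an honestly generic character nothing is visible; you would have to compute a $4\times 4$ determinant of complicated rational expressions and show it is nonzero, which is the whole problem.

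The paper closes this gap with a concrete leading-term computation, which is the rigorous version of your generic-point idea (in effect, evaluation ``at infinity''). Pull the block back along $\Theta^*\circ\Phi$ to the field of fractions of $\mathbb{C}[\lambda^{\pm 1},\mu^{\pm 1}]$, using $\Theta^*\circ\Phi((p,q)_T)=(-1)^d(\lambda^p\mu^q+\lambda^{-p}\mu^{-q})$, multiply through to clear the denominator coming from $2-(2N,2N)_T$, and extract the coefficient of the top monomial in $\lambda$ and $\mu$. All off-diagonal entries drop out and one is left with
\[
\begin{pmatrix} A^{pq} & 0 & 0 & 0 \\ 0 & A^{p(N-q)} & 0 & 0 \\ 0 & 0 & A^{(N-p)q} & 0 \\ 0 & 0 & 0 & A^{(N-p)(N-q)} \end{pmatrix},
\]
whose determinant is a power of $A$, hence nonzero; since the top-degree part of the block's determinant equals the determinant of this top-coefficient matrix, the block is nonsingular over the function field. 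To complete your write-up you would need to replace the sentence ``visibly independent'' with this (or an equivalent) computation.
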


\proof  There is a diffeomorphism $h$ of $\Sigma_{1,0}$ that takes $a$ to the $(1,0)$ curve and $b$ to the $(0,1)$ curve.  If $h$ is orientation preserving it extends to an orientation preserving diffeomorphism $h\times Id:\Sigma_{1,0}\times [0,1]\rightarrow \Sigma_{1,0}\times [0,1]$. Let $s:[0,1]\rightarrow [0,1]$ be the diffeomorphism $s(t)=1-t$. If $h$ is orientation reversing then it extends to an orientation preserving diffeomorphism $h\times s:\Sigma_{1,0}\times [0,1]\rightarrow \Sigma_{1,0}\times [0,1]$.  An orientation preserving homeomorphism of a three-manifold $M$ induces an automorphism of $K_N(M)$. This means we only need to prove that $\{(p,0)_T*(0,q)_T\}$ and  $\{(0,q)_T*(p,0)_T\}$where $p,q\in \{0,\ldots, N-1\}$ form a basis of $S^{-1}K_N(\Sigma_{1,0})$.  The two proofs are similar so we restrict our attention to $\mathcal{C}=\{(p,0)_T*(0,q)_T\}$ . Since the set has $N^2$ elements, to prove that it is a basis we only need to prove that $\mathcal{C}$ is linearly independent over the function field of the character variety of $\Sigma_{1,0}$.

  As a set we realize the character variety of the fundamental group of the torus $X(\Sigma_{1,0})$ to be the set of {\em trace equivalence classes} of representations $\rho:\pi_1(\Sigma_{1,0})\rightarrow SL_2\mathbb{C}$.  Denote the trace equivalence class of $\rho$ by $[\rho]$. The coordinate ring 
$C[X(\Sigma_{1,0})]$ of $X(\Sigma_{1,0})$ is the ring generated by functions $\eta_{g}:X(\Sigma_{1,0})\rightarrow \mathbb{C}$, given by $\eta_g([\rho])=-tr(\rho(g))$.
If $g\in \pi_1(\Sigma_{1,0})$ is homotopic to a simple closed curve, then the isomorphism \cite{D},
\[\Phi:K_1(\Sigma_{1,0})\rightarrow C[X(\Sigma_{1,0})]\] sends the simple diagram corresponding to $g$ to $\eta_g$.

To facilitate the computation we will use a specific $2$-fold branched cover of the character variety of the fundamental group of the torus.  If $\lambda,\mu \in \mathbb{C}^*$, let $\rho_{\lambda,\mu}:\pi_1(\Sigma_{1,0})\rightarrow SL_2\mathbb{C}$
be the homomorphism with \[\rho_{\lambda,\mu}((1,0))=\begin{pmatrix} \lambda & 0 \\ 0 &\lambda^{-1} \end{pmatrix} \ \rho_{\lambda,\mu}((0,1))=\begin{pmatrix} \mu & 0 \\ 0 &\mu^{-1} \end{pmatrix} .\]  We then define
\[ \Theta:\mathbb{C}^* \times \mathbb{C}^* \rightarrow X(\Sigma_{1,0}),\]
by $\Theta(\lambda,\mu)=[\rho_{\lambda,\mu}]$.  The pullback $\Theta^*:C[X(\Sigma_{1,0})]\rightarrow C[\lambda^{\pm 1},\mu^{\pm 1}],$ is given by
\[ \Theta^*\circ \Phi((p,q)_T)=(-1)^d(\lambda^p\mu^q+\lambda^{-p}\mu^{-q}),\]
where $d$ is the greatest commond divisor of $p$ and $q$.

Partition $\mathcal{C}$ into two subsets. Let $\mathcal{C}'$ be the subset where $p=0$ or $q=0$, and let  $\mathcal{C}''$ be the subset where $p,q$ are both nonzero.  Partition  $\mathcal{C}''$ further, into sets of four basis elements $(a,b)_T$ where for some fixed $p,q\in \{1,\ldots,N-1\}$, $a\in \{p,N-p\}$ and $b \in \{q,N-Q\}$.
For instance when $N=3$, \[\mathcal{C}'=\{2(0,0)_T,2(1,0)_T,2(2,0)_T,2(0,1)_T,2(0,2)_T\} \] and $\mathcal{C}''$ consists of one set of four, \[\{(1,2)_T,(2,1)_T,(1,1)_T,(2,2)_T\}.\] In general $\mathcal{C}'$ has $2N-1$ elements, and $\mathcal{C}''$ decomposes into $\frac{(N-1)^2}{4}$ sets of four elements.  The spans of each one of these sets are independent from one another.  

Given $p,q\in \{1,\ldots N-1\}$. We need to show that \[\{(p,0)_T*(0,q)_T,(N-p,0)_T*(0,q)_T,(p,0)_T*(0,N-q)_T,(N-p,0)_T*(0,N-q)_T\}\] are linearly independent.
To this end we rewrite these in terms of the basis $\mathcal{B}$ using relation \ref{quad}.

The change of basis matrix, where the columns correspond in order to $(p,q)_T,(p,N-q)_T,(N-p,q)_T$ and $(N-p,N-q)_T$, and the rows correspond in order to $(p,0)_T*(0,q)_T,(p,0)_T*(0,N-q)_T,(N-p,0)_T*(0,q)_T$ and $(N-p,0)_T*(0,N-q)_T$,
is given by

\[ \scalemath{0.60}{\begin{pmatrix} A^{pq} & \frac{(-1)^{p+1}A^{-pq}((2N,N)_T-(0,N)_T)}{2-(2N,2N)_T} &\frac{(-1)^{q}A^{-pq}((N,2N)_T-(N,0)_T)}{2-(2N,2N)_T} & 0 \\ \frac{(-1)^{p+1}A^{-(N-p)q}((2N,N)_T-(0,N)_T)}{2-(2N,2N)_T} & A^{p(N-q)} & 0 &\frac{(-1)^{N-q}A^{-p(N-q)}((N,2N)_T-(N,0)_T)}{2-(2N,2N)_T} \\ \frac{(-1)^{q}A^{-(N-p)q}((N,2N)_T-(N,0)_T)}{2-(2N,2N)_T} & 0 & A^{p(N-q)q} &  \frac{(-1)^{N-p+1}A^{-(N-p)q}((2N,N)_T-(0,N)_T)}{2-(2N,2N)_T}  \\ 0 &\frac{(-1)^{N-q}A^{-(N-p)(N-q)}((N,2N)_T-(N,0)_T)}{2-(2N,2N)_T} & \frac{(-1)^{N-p+1}A^{-(N-p)(N-q)}((2N,N)_T-(0,N)_T)}{2-(2N,2N)_T} & A^{(N-p)(N-q)} \end{pmatrix} }.\]

To see this matrix is nonsingular, pull it back to  a matrix with coefficients from the field of quotients of $\mathbb{C}[\lambda^{\pm 1},\mu^{\pm 1}]$ by using $\Theta^* \circ \Phi$. Next multiply by $2\lambda^2\mu^2-\lambda^4\mu^4-1$ to clear the denominator so that the result is a matrix with coefficients in $\mathbb{C}[\lambda,\mu]$.  Simple inspection reveals that highest power of $\lambda$ and $\mu$ that appears is $4$, taking the coefficient of $\lambda^4\mu^4$ yields,
\[\begin{pmatrix} A^{pq} & 0 & 0 & 0 \\ 0 & A^{p(N-q)} & 0 & 0 \\ 0 & 0 & A^{(N-p)q} & 0 \\ 0 & 0 & 0 & A^{(N-p)(N-q)} \end{pmatrix},\] which has nontrivial determinant.

The change of basis matrix between the basis $\mathcal{B}^{\prime}$ and $\mathcal{C}$ decomposes into a block which is the $2N-1$ dimensional identity matrix times $2$ corresponding to $(p,0)_T*(0,0)_T$ and $(0,0)_T*(0,q)_T$ , and $(\frac{(N-1)^2}{2})^2$ blocks coming from changing basis between $\{(p,q)_T,(p,N-q)_T,(N-p,q)_T,(N-p,N-q)_t\}$ and $\{(p,0)_T*(0,q)_T,(p,0)_T*(0,N-q)_T,(N-p,0)_T*(0,q)_T,(N-p,0)_T*(0,N-q)_T\}$ where $p$ and $q$ range over $1,\ldots, \frac{N-1}{2}$. The above computation shows that each one of these blocks is nonsingular. Therefore $\mathcal{C}$ is a basis.  \qed 

The next step is to compute the trace.  By this we mean let $S^{-1}K_N(\Sigma_{1,0})$ act on itself on the left, and take $\frac{1}{N^2}$ times the trace of the resulting matrix.

\begin{theorem} The trace of $\sum_{p,q}\alpha_{p,q}(p,q)_T$ where $\alpha_{p,q}\in S^{-1}\chi(\Sigma_{1,0})$ is \[\sum_{p,q \ \mathrm{where} \ N|p \ \mathrm{and} N|q}\alpha_{p,q}(p,q)_T.\]

\end{theorem}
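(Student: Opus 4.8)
The plan is to use the $S^{-1}\chi(\Sigma_{1,0})$-linearity of the normalized trace to collapse the statement to a single basis curve, and then to compute that one value inside a quantum torus. Since $Tr$ is $S^{-1}\chi(\Sigma_{1,0})$-linear, $Tr\big(\sum_{p,q}\alpha_{p,q}(p,q)_T\big)=\sum_{p,q}\alpha_{p,q}\,Tr\big((p,q)_T\big)$, so the theorem is equivalent to the two assertions that $Tr\big((m,n)_T\big)=(m,n)_T$ when $N\mid m$ and $N\mid n$, and $Tr\big((m,n)_T\big)=0$ otherwise. The divisible case is immediate: there $(m,n)_T\in\chi(\Sigma_{1,0})$ is central, so left multiplication $L_{(m,n)_T}$ is multiplication by the scalar $(m,n)_T$, whose $N^2\times N^2$ matrix is $(m,n)_T\cdot I$ and whose normalized trace is $(m,n)_T$. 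Everything therefore reduces to proving that $Tr\big((m,n)_T\big)=0$ whenever at least one of $m,n$ is not divisible by $N$.

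For the remaining case I would pass through the Frobenius--Gelca embedding. The product to sum formula is precisely the statement that $(p,q)_T\mapsto e_{p,q}+e_{-p,-q}$ is an injective algebra homomorphism of $K_N(\Sigma_{1,0})$ into the quantum torus $\mathcal{T}$ generated by invertible monomials $e_{p,q}$ with $e_{p,q}e_{r,s}=A^{ps-qr}e_{p+r,q+s}$, since expanding $(e_{p,q}+e_{-p,-q})(e_{r,s}+e_{-r,-s})$ reproduces $A^{ps-qr}(p+r,q+s)_T+A^{-(ps-qr)}(p-r,q-s)_T$. Because $A^N=-1$, the monomials $e_{Np,Nq}$ are central in $\mathcal{T}$ and symmetric combinations of them span the image of $\chi(\Sigma_{1,0})$. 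After localizing, $\mathcal{T}$ is Azumaya of rank $N^2$ over its center, and in a splitting one may send $e_{1,0}\mapsto U$ and $e_{0,1}\mapsto V$, the $N\times N$ shift and clock matrices with $UV=A^2VU$, so that $e_{a,b}\mapsto A^{-ab}U^aV^b$ up to a central scalar. A one line matrix check (using that $A^2$ is a primitive $N$th root of unity because $N$ is odd) shows $\operatorname{tr}(U^aV^b)=0$ unless $N\mid a$ and $N\mid b$, in which case $U^aV^b$ is scalar. Thus the reduced trace of $\mathcal{T}$ is the center projection $t$ with $t(e_{a,b})=e_{a,b}$ for $N\mid a,\ N\mid b$ and $t(e_{a,b})=0$ otherwise; in particular $t\big((m,n)_T\big)=t(e_{m,n})+t(e_{-m,-n})=0$ in the non-divisible case.

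It remains to identify the normalized trace $Tr$ coming from the left regular representation with this projection. The clean way to see this is that, over the splitting field, the regular representation of the degree-$N$ central simple algebra $S^{-1}K_N(\Sigma_{1,0})$ is $N$ copies of the standard $N$-dimensional representation, so $Tr\big((m,n)_T\big)=\frac{1}{N^2}\cdot N\cdot\operatorname{tr}\big(U^mV^n\big)$ up to a central scalar, and the traceless computation above makes this vanish. Equivalently, one invokes uniqueness: on a central simple algebra the space of linear functionals vanishing on commutators is one dimensional, and both $Tr$ and $t$ are normalized $S^{-1}\chi(\Sigma_{1,0})$-linear functionals killing commutators, so they coincide. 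The main obstacle I expect is exactly this identification, namely establishing that $S^{-1}K_N(\Sigma_{1,0})$ is central simple (Azumaya) over $S^{-1}\chi(\Sigma_{1,0})$ and that its regular representation base-changes to $N$ copies of the standard one; this is the structural input that makes the quantum torus computation legitimate. A more elementary but more laborious alternative avoids the structure theory entirely: compute the diagonal of $L_{(m,n)_T}$ directly in the basis $\mathcal{B}'=\{(a,b)_T:0\le a,b\le N-1\}$, reducing products to $\mathcal{B}'$ by the two-term identity $(Nc,Nd)_T\ast(a,b)_T=(-1)^{cb-da}\big[(Nc+a,Nd+b)_T+(Nc-a,Nd-b)_T\big]$. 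Here the delicate point is the bookkeeping of those two-term central reductions, since whether a given basis vector contributes to the diagonal depends not only on residues modulo $N$ but on which shifts are actually applied; organizing this cancellation is precisely what the quantum torus picture streamlines.
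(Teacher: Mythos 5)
Your reductions are sound (linearity, and the observation that for $N\mid m$, $N\mid n$ the element $(m,n)_T$ lies in $S^{-1}\chi(\Sigma_{1,0})$, so left multiplication is scalar and the normalized trace is $(m,n)_T$), and the quantum-torus facts you invoke are all true: the embedding $(p,q)_T\mapsto e_{p,q}+e_{-p,-q}$ is the Frohman--Gelca theorem (reference [B] of the paper; the name is Frohman, not ``Frobenius''), the $e_{Np,Nq}$ span the center, and $\operatorname{tr}(U^aV^b)=0$ unless $N\mid a$ and $N\mid b$. The genuine gap is exactly the one you flag and then leave open: identifying the normalized left-regular trace on $S^{-1}K_N(\Sigma_{1,0})$ with the reduced trace requires that $S^{-1}K_N(\Sigma_{1,0})$ be central simple over $F=S^{-1}\chi(\Sigma_{1,0})$, and this does \emph{not} follow formally from the Azumaya property of the quantum torus $\mathcal{T}$. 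The skein algebra is only the invariant subalgebra $\mathcal{T}^{\sigma}$ under $\sigma(e_{p,q})=e_{-p,-q}$, and its center $F$ is an index-two subfield of the center $\mathbb{C}(u,v)$ of the localized quantum torus (here $u=e_{N,0}$, $v=e_{0,N}$); invariant subalgebras of central simple algebras need not be central simple (the invariants of conjugation by $\mathrm{diag}(1,-1)$ acting on $M_2(\mathbb{C})$ are the diagonal matrices). So as written, your main route is incomplete at its crux. The gap is fillable: since $\sigma$ moves $u,v$, it acts $\mathbb{C}(u,v)$-semilinearly on the localized quantum torus, so by Galois descent the multiplication map $\mathcal{T}^{\sigma}\otimes_F\mathbb{C}(u,v)\rightarrow\mathcal{T}$ (localized) is an isomorphism; injectivity can also be seen by hand, because $a+bw=0$ with $a,b$ $\sigma$-invariant and $w\in\mathbb{C}(u,v)\setminus F$ forces $a=b=0$ after applying $\sigma$ and using that the localized quantum torus is a domain, and surjectivity then follows from the dimension count $N^2$ on both sides. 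This exhibits $S^{-1}K_N(\Sigma_{1,0})$ as an $F$-form of the localized quantum torus, hence central simple of degree $N$, after which both your uniqueness-of-trace argument and the ``regular representation is $N$ copies of the standard one'' argument become legitimate and your clock-and-shift computation finishes the proof.

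For comparison, the paper's own proof avoids all structure theory. After the same linearity reduction, it uses equivariance: an orientation-preserving homeomorphism of $\Sigma_{1,0}$ carries any simple closed curve to the $(1,0)$ curve, and the induced automorphisms of $S^{-1}K_N(\Sigma_{1,0})$ and $S^{-1}\chi(\Sigma_{1,0})$ intertwine the (basis-independent) trace, so it suffices to compute $Tr((k,0)_T)$. In the basis $\{(p,0)_T*(0,q)_T\}$ the product-to-sum formula gives $(k,0)_T*(p,0)_T*(0,q)_T=(p+k,0)_T*(0,q)_T+(p-k,0)_T*(0,q)_T$, and the diagonal entries cancel in $\pm 1$ pairs exactly as in the annulus computation. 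This homeomorphism trick is precisely what defuses the ``delicate bookkeeping'' you anticipated in your elementary alternative: one never tracks the diagonal of a general $(m,n)_T$ against $\mathcal{B}'$, only of $(k,0)_T$ against the product basis, where the reductions behave verbatim as in $K_N(\Sigma_{0,2})$. Your route, once the descent argument is supplied, proves more---it identifies $S^{-1}K_N(\Sigma_{1,0})$ as a central simple algebra and $Tr$ as its reduced trace---but that is considerably more machinery than the theorem needs.
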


\proof The linearity of the trace means we only need to compute the trace of $(p,q)_T$ and see that it is zero unless $N|p$ and $N|q$, in which case it is $(p,q)_T$.  Since orientation preserving homeomorphisms of $\Sigma_{1,0}$ induce automorphisms of $S^{-1}\chi(\Sigma_{1,0})$ and $S^{-1}K_N(\Sigma_{1,0})$ that are coherent with one another, and there is an orientation homeomorphism taking and simple closed curve to the $(1,0)$ curve, we only need to compute the trace of $(k,0)$.

It doesn't make any difference what basis we choose for $K_N(\Sigma_{1,0})$ so we choose $\{(p,0)_T*(0,q)_T\}$ where $p,q\in \{0,\ldots,N-1\}$..  From the product to sum formula,
\[ (k,0)_T*(p,0)_T*(0,q)_T=(p+k,0)_T*(0,q)_T+(p-k,0)_T*(0,q)_T.\]
Computations similar to the ones for computing the trace on the annulus complete the proof. \qed

Here is a construction that will come in handy. Suppose that $C$ is a simple closed curve on the surface $F$ and $A_0$ and $A_1$ are annular neighborhoods of $C\times\{0\}$ and $C\times \{1\}$ in $F\times \{0,1\}$.  By gluing an annulus cross an interval into $A_i$ we can make $K_N(F)$ into a left-module ( gluing into $A_1$) or a right-module over $K_N(\Sigma_{0,2})$.  Since the inclusion map sends $\chi(\Sigma_{0,2})$ into $\chi(F)$ the each action  extends to make $S^{-1}K_N(F)$ into a vector space over $S^{-1}K_N(\Sigma_{0,2})$.

For instance $S^{-1}K_N(\Sigma_{1,0})$ is a vector space over $S^{-1}K_N(\Sigma_{0,2})$ where the action comes from gluing the annulus into an annulus in $\Sigma_{1,0}\times \{1\}$ corresponding to the $(1,0)$ curve.
The basis $(p,0)_T*(0,q)_T$ where $p,q\in \{0,1,\ldots,N-1\}$ corresponds to  a splitting of $K_N(\Sigma_{1,0})$ as a direct sum of vector spaces over $S^{-1}(\Sigma_{0,2})$ spanned by $\chi(\Sigma_{1,0})(0,q)_T$.
\[ S^{-1}K_N(\Sigma_{1,0})=\bigoplus_q (S^{-1}K_N(\Sigma_{0,2}))\chi(\Sigma_{1,0})(0,q)_T.\]
If $T_k(x)\in S^{-1}K_N(\Sigma_{0,2})$ it acts as $(k,0)_T$ on the left.

A similar construction, coming from gluing the annulus into an annulus in $\Sigma_{1,0}$ coming from the $(0,1)$ curve allows us to write $K_N(\Sigma_{1,0})$ as a direct sum, over $S^{-1}K_N(\Sigma_{0,2})$ acting on the right,
\[ S^{-1}K_N(\Sigma_{1,0})=\bigoplus_p \chi(\Sigma_{1,0})(p,0)_T(S^{-1}K_N(\Sigma_{0,2})).\]
Here $T_k(x)\in S^{-1}K_N(\Sigma_{0,2})$ it acts as $(0,k)_T$ on the left.

\begin{theorem} $S^{-1}K_N(\Sigma_{1,0})$ is a Frobenius algebra over $S^{-1}\chi(\Sigma_{1,0})$ with the normalized trace $Tr:S^{-1}K_N(\Sigma_{1,0})\rightarrow S^{-1}\chi(\Sigma_{1,0})$. \end{theorem}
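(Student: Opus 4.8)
\section*{Proof proposal}

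The plan is to reduce the statement to the nondegeneracy of the trace pairing and then to block-diagonalize that pairing. Recall from the background section that an $S^{-1}\chi(\Sigma_{1,0})$-linear functional $Tr$ is nondegenerate precisely when no nontrivial principal ideal lies in its kernel, and that a nondegenerate normalized trace makes the algebra a symmetric Frobenius algebra; the symmetry is automatic here since $Tr(\alpha\beta)=Tr(\beta\alpha)$. So it suffices to exhibit a basis of $S^{-1}K_N(\Sigma_{1,0})$ in which the Gram matrix $G$ of the pairing $\sigma(\alpha,\beta)=Tr(\alpha\beta)$ has nonzero determinant. I would use $\mathcal{B}'=\{(p,q)_T:0\le p,q\le N-1\}$, which is a basis over the function field by Lemma 5.4 together with the identification of the dimension as $N^2$.

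First I would compute the entries $\sigma((p,q)_T,(r,s)_T)=Tr((p,q)_T\ast(r,s)_T)$ from the product to sum formula and the trace theorem above. The product expands as $A^{ps-qr}(p+r,q+s)_T+A^{-(ps-qr)}(p-r,q-s)_T$, while the trace annihilates every $(m,n)_T$ with $N\nmid m$ or $N\nmid n$ and fixes it otherwise. Since $0\le p,q,r,s\le N-1$, the difference term survives only when $p=r$ and $q=s$, contributing $(0,0)_T=2$ on the diagonal, and the sum term survives only when $p+r$ and $q+s$ each lie in $\{0,N\}$. The decisive bookkeeping point is that because $N$ is odd one never has $2p=N$, so the sum term couples each basis vector to exactly one partner: $(p,q)_T$ to $(N-p,N-q)_T$ in the interior, $(p,0)_T$ to $(N-p,0)_T$, $(0,q)_T$ to $(0,N-q)_T$, and $(0,0)_T$ only to itself. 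After reordering $\mathcal{B}'$ so that each vector sits next to its partner, $G$ becomes block diagonal.

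The blocks are then read off immediately: a single $1\times1$ block $[\,4\,]$ from $(0,0)_T$, together with $2\times2$ blocks $\left(\begin{smallmatrix}2 & c\\ c & 2\end{smallmatrix}\right)$ where $c=\pm(N,0)_T$, $\pm(0,N)_T$, or $\pm(N,N)_T$ depending on the family, the two off-diagonal entries agreeing because $\sigma$ is symmetric. Each block has determinant $4-c^2$, and a single use of the product to sum formula gives $(N,0)_T^2=(2N,0)_T+2$, hence $4-(N,0)_T^2=2-(2N,0)_T$, and likewise $4-(0,N)_T^2=2-(0,2N)_T$ and $4-(N,N)_T^2=2-(2N,2N)_T$. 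Each of these is a difference of two distinct basis elements of the integral domain $\chi(\Sigma_{1,0})$, hence nonzero. Counting the pairs ($\tfrac{N-1}{2}$ of each boundary type and $\tfrac{(N-1)^2}{2}$ interior ones, accounting for all $N^2$ basis vectors), multiplying the block determinants yields
\[
\det G=4\,\bigl(2-(2N,0)_T\bigr)^{\frac{N-1}{2}}\bigl(2-(0,2N)_T\bigr)^{\frac{N-1}{2}}\bigl(2-(2N,2N)_T\bigr)^{\frac{(N-1)^2}{2}},
\]
a nonzero element of $\chi(\Sigma_{1,0})$ and therefore a unit in $S^{-1}\chi(\Sigma_{1,0})$. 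Consequently $\sigma$ is nondegenerate and $S^{-1}K_N(\Sigma_{1,0})$ is a symmetric Frobenius algebra.

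The step I expect to require the most care is establishing this pairing structure, namely that the sum term matches each basis vector with a single partner and never produces an extra self-coupling on the diagonal; both facts rest squarely on $N$ being odd and must be checked family by family. If that direct argument feels delicate, an equivalent route to the nonvanishing of the block determinants is to pull back along $\Theta^{\ast}\circ\Phi$ to $\mathbb{C}[\lambda^{\pm1},\mu^{\pm1}]$, under which $4-(N,N)_T^2\mapsto-(\lambda^{N}\mu^{N}-\lambda^{-N}\mu^{-N})^2$, and similarly $4-(N,0)_T^2\mapsto-(\lambda^{N}-\lambda^{-N})^2$ and $4-(0,N)_T^2\mapsto-(\mu^{N}-\mu^{-N})^2$, all manifestly nonzero; this reuses the pullback device already applied to verify that $\mathcal{C}$ is a basis.
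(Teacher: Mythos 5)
Your proof is correct, but it is not the route the paper takes for this particular theorem. The paper argues abstractly with units: using the left and right module structures over $S^{-1}K_N(\Sigma_{0,2})$ (which it has shown is a field), the skeins $(a,0)_T$ and $(0,b)_T$ are invertible in $S^{-1}K_N(\Sigma_{1,0})$; given any nonzero generator $\sum_{p,q}\alpha_{p,q}(p,0)_T*(0,q)_T$ of a principal ideal with $\alpha_{a,b}\neq 0$, multiplying on the left by $(a,0)_T^{-1}$ and on the right by $(0,b)_T^{-1}$ produces an element of the ideal of the form $(0,0)_T+\sum_{(p,q)\neq (0,0)}\beta_{p,q}(p,0)_T*(0,q)_T$, whose trace is $(0,0)_T\neq 0$, so no nontrivial principal ideal lies in $\ker(Tr)$. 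That argument is shorter and requires no Gram matrix, but it leans on the unit trick and on the basis $\{(p,0)_T*(0,q)_T\}$. Your computation is instead essentially the content of the theorem immediately following this one in the paper, where the determinant of the pairing on $\mathcal{B}'$ is computed and the Frobenius property re-derived as a consequence; your version buys more than the paper's proof of the present statement, since the explicit determinant identifies the locus where the specializations $K_N(\Sigma_{1,0})_{\phi}$ fail to be Frobenius. One point in your favor: your exponent $\frac{(N-1)^2}{2}$ on the factor $2-(2N,2N)_T=(0,0)_T^2-(N,N)_T^2$ is correct, whereas the paper states $\left(\frac{N-1}{2}\right)^2$; each of the paper's $\left(\frac{N-1}{2}\right)^2$ interior $4\times 4$ blocks has determinant $\left((0,0)_T^2-(N,N)_T^2\right)^2$, not $(0,0)_T^2-(N,N)_T^2$, as the paper's own $N=3$ matrix confirms (the entries $\pm(3,3)_T$ sit in two disjoint $2\times 2$ blocks). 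Your bookkeeping of the pairing---each basis vector couples to a unique partner and never to itself via the sum term, because $N$ is odd---and your nonvanishing argument via linear independence of the $(p,q)_T$ over $\mathbb{C}$ are both sound, as is the fallback pullback argument through $\Theta^*\circ\Phi$.
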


\proof  We prove that there are no nontrivial principal ideals in the kernel of $Tr$.
Suppose that $I=(\sum_{p,q}\alpha_{p,q}(p,0)_T*(0,q)_T)$ is a principal ideal where $\alpha_{p,q}\in S^{-1}\chi(\Sigma_{1,0})$, $p,q\in \{0,\ldots,N-1\}$ and some $\alpha_{a,b}\neq 0$.  Since $S^{-1}K_N(\Sigma_{0,2})$ is a field there exist $T_a(x)^{-1}$ and $T_b(x)^{-1}$.  Using the actions above to include these into $\Sigma_{1,0}\times [0,1]$ there are skeins $(a,0)_T^{-1}$ and $(0,b)_T^{-1}$ in $S^{-1}K_N(\Sigma_{1,0})$.  That is, the skeins $(p,q)_T$ are units.
Multiplying the generator of the principle ideal on the left by $(a,0)_T^{-1}$ and on the right by $(0,b)_T^{-1}$ we get a skein of the form,
\[ (0,0)_T+\sum_{(p,q)\neq (0,0)}\beta_{p,q}(p,0)_T*(0,q)_T\]
with $\beta_{p,q}\in S^{-1}\chi(\Sigma_{1,0})$ is in $I$.  However 
\[Tr((0,0)_T+\sum_{(p,q)\neq (0,0)}\beta_{p,q}(p,0)_T*(0,q)_T)=(0,0)_T\neq 0.\]

\qed

Next we compute the determinant of  the pairing $<\alpha,\beta>=Tr(\alpha\beta)$.  From the product to sum formula and the linearity of trace,
\[ Tr((p,q)_T \ast (r,s)_T)=A^{\left|\begin{matrix} p & q \\ r & s \end{matrix}\right|}Tr((p+r,q+s)_T)+A^{-\left|\begin{matrix} p & q \\ r & s \end{matrix}\right|}Tr((p-r,q-s)_T).\]

This means that $<(p,q)_T,(r,s)_T>=0$ unless $N$ divides both $p+r$ and $q+s$, or $N$ divides both $p-r$ and $q-s$. As $N$ is odd, both case don't occur at the same time unless one of $(p,q)_T$ or $(r,s)_T$ is $(0,0)_T$.  In the first case, $<(p,q)_T,(r,s)_T>=(-1)^{ps+qr}(p+r,q+s)_T$ and in the second case $<(p,q)_T,(r,s)_T>=(-1)^{ps+qr}(p-r,q-s)_T$ .  Finally $<(0,0)_T,(r,s)_T>$ is zero unless $r$ and $s$ are both divisible by $N$, in which case it is $2(r,s)_T$. This makes the computation of the matrix of the pairing with respect to the basis $\mathcal{B}'$ straightforward.

Here is the matrix of the pairing for $N=3$ with respect to the basis $(p,q)_T$, where $p,q\in \{0,1,2\}$  ordered lexicographically.

\[ \begin{pmatrix} 2(0,0)_T & 0 & 0 & 0 & 0 & 0 & 0 & 0 & 0 \\
0 & (0,0)_T & (0,3)_T & 0 & 0 & 0 & 0 & 0 & 0 \\
0 & (0,3)_T & (0,0)_T & 0 & 0 & 0 & 0 & 0 & 0 \\
0 & 0 & 0 & (0,0)_T & 0 & 0 & (3,0)_T & 0 & 0\\
0 & 0 & 0 & 0 & (0,0)_T& 0 & 0 & 0 & (3,3)_T\\
0 & 0 & 0 & 0 &0 & (0,0)_T& 0 &-(3,3)_T& 0 \\
0 & 0 & 0 &(3,0)_T & 0 & 0 & (0,0)_T & 0 & 0\\
0 & 0 & 0 & 0 &0 & -(3,3)_T & 0 & (0,0)_T & 0 \\
0 & 0 & 0 & 0 &(3,3)_T & 0 & 0 & 0 & (0,0)_T \end{pmatrix}\]

The determinant of the pairing can be computed similarly to the case of the annulus.  

\begin{theorem} The determinant of the pairing $<\ , \ >$ on $S^{-1}K_N(\Sigma_{1,0})$ with respect to the basis $\mathcal{B}'$ is
\[2(0,0)_T((0,0)_T^2-(N,0)_T^2)^{\frac{N-1}{2}}((0,0)_T^2-(0,N)_T^2)^{\frac{N-1}{2}}((0,0)_T^2-(N,N)_T^2)^{\left(\frac{N-1}{2}\right)^2}.\] Hence, $S^{-1}K_N(\Sigma_{1,0})$ is a Frobenius algebra over $S^{-1}\chi(\Sigma_{1,0})$.

\end{theorem}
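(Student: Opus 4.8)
The plan is to write down the Gram matrix of the pairing in the basis $\mathcal{B}'$ directly from the values of $\langle (p,q)_T,(r,s)_T\rangle$ computed just above, and then to observe that this matrix is block diagonal once the basis is grouped by a natural involution. Recall from those computations that $\langle (p,q)_T,(r,s)_T\rangle$ vanishes unless either $N\mid p-r$ and $N\mid q-s$ (the difference case) or $N\mid p+r$ and $N\mid q+s$ (the sum case), and that because $N$ is odd these two conditions cannot hold simultaneously unless one of the two skeins is $(0,0)_T$. Since $p,r,q,s\in\{0,\dots,N-1\}$, the difference case forces $(r,s)=(p,q)$, contributing only a diagonal entry, while the sum case pairs $(p,q)_T$ with the unique partner $(r,s)_T\in\mathcal{B}'$ determined by $r\equiv -p$ and $s\equiv -q\pmod N$.

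First I would introduce the involution $\iota(p,q)=(-p\bmod N,\,-q\bmod N)$ on the index set $\{0,\dots,N-1\}^2$. Because $N$ is odd, $2p\equiv 0$ forces $p\equiv 0$, so the unique fixed point of $\iota$ is $(0,0)$ and the remaining $N^2-1$ indices split into $\tfrac{N^2-1}{2}$ two-element orbits. I would then verify that the Gram matrix is block diagonal with respect to these orbits: the fixed point yields the $1\times 1$ block $\bigl[\,2(0,0)_T\,\bigr]$, and each orbit $\{x,\iota x\}$ yields a $2\times 2$ block whose two diagonal entries are $\langle x,x\rangle=(0,0)_T$ (the difference case, with sign $(-1)^{2pq}=1$) and whose off-diagonal entry is the sum-case value $\pm(\text{label})_T$, where the label is $(N,0)_T$, $(0,N)_T$, or $(N,N)_T$ according to whether exactly the first, exactly the second, or both coordinates of $x$ are nonzero. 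Each such block has determinant $(0,0)_T^2-(\text{label})_T^2$, the sign of the off-diagonal entry being irrelevant since only its square enters.

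Next I would count the orbits of each type. The orbits $\{(p,0),(N-p,0)\}$ with $p\in\{1,\dots,N-1\}$ number $\tfrac{N-1}{2}$, and likewise for $\{(0,q),(0,N-q)\}$; the orbits $\{(p,q),(N-p,N-q)\}$ with both coordinates nonzero number $\tfrac{(N-1)^2}{2}$ (these are genuine $2$-element orbits since $2p=N$ is impossible for odd $N$), and one checks $\tfrac{N-1}{2}+\tfrac{N-1}{2}+\tfrac{(N-1)^2}{2}=\tfrac{N^2-1}{2}$. Multiplying the block determinants then produces the asserted product, with exponents $\tfrac{N-1}{2}$, $\tfrac{N-1}{2}$, and $\tfrac{(N-1)^2}{2}$ on the three quadratic factors; the displayed $N=3$ matrix, with blocks $\{2,3\}$, $\{4,7\}$, $\{5,9\}$, $\{6,8\}$, is the decomposition in the smallest case and serves as a consistency check on these exponents. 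To finish, it suffices to see that the determinant is a nonzero element of the field $S^{-1}\chi(\Sigma_{1,0})$, which by the earlier proposition I can test by pulling back along $\Theta^*\circ\Phi$: there $(0,0)_T=2$, while $(N,0)_T\mapsto\pm(\lambda^N+\lambda^{-N})$, $(0,N)_T\mapsto\pm(\mu^N+\mu^{-N})$, and $(N,N)_T\mapsto\pm(\lambda^N\mu^N+\lambda^{-N}\mu^{-N})$, so each factor becomes a nonzero Laurent polynomial in $\lambda,\mu$. Hence the Gram matrix is invertible over the function field, the pairing is nondegenerate, and by the criterion recorded in the Background the algebra is symmetric Frobenius.

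The main obstacle is not any single determinant but verifying rigorously that the matrix is genuinely block diagonal, that is, that no basis element pairs nontrivially with a third element outside its $\iota$-orbit. This is precisely where oddness of $N$ is used, and I would isolate it as a short lemma: if $\langle(p,q)_T,(r,s)_T\rangle\neq 0$ with $(p,q)\neq(0,0)$, then $(r,s)\in\{(p,q),\iota(p,q)\}$. The remaining bookkeeping is the sign tracking in the off-diagonal entries needed to match the $N=3$ example, but since those signs are squared away in each $2\times 2$ determinant they do not affect the final formula or the conclusion.
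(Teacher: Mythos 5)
Your proof is correct, and its skeleton is the same as the paper's: the paper also partitions $\mathcal{B}'$ into subsets on which the Gram matrix is block diagonal --- the singleton $\{(0,0)_T\}$, the sets $\{(p,0)_T\}$ and $\{(0,p)_T\}$ with $p\in\{1,\ldots,N-1\}$, and the four-element sets $\{(p,q)_T,(p,N-q)_T,(N-p,q)_T,(N-p,N-q)_T\}$ --- and multiplies block determinants. Your $2\times 2$ blocks indexed by orbits of the involution $\iota$ are simply a refinement of that partition: each of the paper's $(N-1)$-dimensional blocks and each of its $4\times 4$ blocks splits into your $2\times 2$ pieces.

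There is, however, one discrepancy you should not gloss over, and it is a point in your favor. Your count gives exponent $\frac{(N-1)^2}{2}$ on the factor $((0,0)_T^2-(N,N)_T^2)$, since there are $\frac{(N-1)^2}{2}$ two-element orbits with both coordinates nonzero, each contributing one such factor; the theorem as stated has exponent $\left(\frac{N-1}{2}\right)^2=\frac{(N-1)^2}{4}$. These are not equal, so your claim that the block count ``produces the asserted product'' is not literally true --- but your exponent is the correct one, and the paper's statement (and proof) contain an error at exactly this step. The paper asserts that each $4\times 4$ block has determinant $(0,0)_T^2-(N,N)_T^2$, yet that block is the direct sum of two symmetric $2\times 2$ blocks with diagonal $(0,0)_T$ and off-diagonal $\pm(N,N)_T$, so its determinant is $\left((0,0)_T^2-(N,N)_T^2\right)^2$; with $\left(\frac{N-1}{2}\right)^2$ such blocks the total exponent is $2\left(\frac{N-1}{2}\right)^2=\frac{(N-1)^2}{2}$. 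The paper's own displayed $N=3$ matrix confirms this: the blocks on rows $\{5,9\}$ and $\{6,8\}$ each contribute $(0,0)_T^2-(3,3)_T^2$, giving exponent $2=\frac{(3-1)^2}{2}$ rather than $1=\left(\frac{3-1}{2}\right)^2$. Finally, your closing step --- pulling back along $\Theta^*\circ\Phi$ to verify each factor is a nonzero element of the function field --- is a genuine addition: the paper leaves nondegeneracy of the determinant implicit, and without some such argument the ``hence Frobenius'' conclusion is incomplete. The corrected exponent does not affect that conclusion.
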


\proof The matrix decomposes into blocks, coming from partitioning the basis $\{(p,q)_T\}$ into subsets, where pairing basis elements in different subsets is always zero.

\begin{itemize}
\item The first subset is $\{(0,0)_T\}$.
$Tr((0,0)_T^2)=2(0,0)_T$.
\item The second subset is $\{(p,0)_T\}$ where $p\in \{1,\ldots, N-1\}$. As a matrix the pairing agrees with the matrix for the pairing of the annulus restricted to the subspace $\{T_k(x)\}$ where $k\in \{1,\ldots, N-1\}$ via the obvious correspondence.
The determinant of this block is $((0,0)_T^2-(N,0)^2)^{\frac{N-1}{2}}$.
\item The third subset is $\{(0,p)_T\}$ where $p\in \{1,\ldots,N-1\}$. The determinant of this block is $((0,0)_T^2-(0,N)_T^2)^{\frac{N-1}{2}}$.
\item There are $(\frac{N-1}{2})^2$ subsets of four that come from a choice of $p,q\in \{1,\ldots,\frac{N-1}{2}\}$ consisting of $\{(p,q)_T,(p,N-q)_T,(N-p,q)_T,(N-p,N-q)_T\}$ .  With respect to this basis the pairing is,
\[ \begin{pmatrix} (0,0)_T & 0 & 0 &(-1)^{p+q}(N,N)_T\\ 0 & (0,0)_T & (-1)^{p+q+1}(N,N)_T & 0 \\
0 & (-1)^{p+q+1}(N,N)_T & (0,0)_T & 0 \\ (-1)^{p+q}(N,N)_T & 0 & 0 &(0,0)_T\end{pmatrix} .\] The determinant of this matrix is $((0,0)_T^2
-(N,N)_T^2)$.

\end{itemize}

The determinant of the pairing is the product of the determinants of the blocks,
\[ 2(0,0_T)((0,0)_T^2-(N,0)^2)^{\frac{N-1}{2}}((0,0)_T^2-(0,N)_T^2)^{\frac{N-1}{2}}((0,0)_T^2
-(N,N)_T^2)^{\left(\frac{N-1}{2}\right)^2
}.\]

 \qed

The character variety of the torus is describe as follows. Define $t:Rep(\pi_1(SL_2\mathbb{C}))\rightarrow \mathbb{C}^3$ be given by,
\[t(\rho)=(x,y,z),\] where $x=-tr(\rho((1,0))$, $y=-tr(\rho(0,1))$ and $z=-tr(\rho((1,1))$.  The image of $t$ is in one to one correspondence with the $SL_2\mathbb{C}$-character variety of $\pi_1(\Sigma_{1,0})$.  The image of $t$ is the locus,
\[ \{(x,y,z)\in\mathbb{C}^3|x^2+y^2+z^2+xyz-4=0\}.\]

As long as we stay away from the places where $x,y$, or $z$ is $\pm 2$, the determinant of the pairing specialized at that point is nonzero.

\begin{theorem} Let $\phi_{x,y,z}:\chi(\Sigma_{1,0})\rightarrow \mathbb{C}$ be
the place corresponding to a representation where $tr(\rho((1,0)=-x$, $tr(\rho(0,1))=-y$ and $tr(\rho((1,1))=-z$.  As long as none of $x$, $y$, or $z$ takes on the values $\pm 2$, $K_N(\Sigma_{1,0})_{\phi_{(x,y,z)}}$ is a Frobenius algebra.\end{theorem}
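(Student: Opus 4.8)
The plan is to run the same computation used for the annulus and for $S^{-1}K_N(\Sigma_{1,0})$, but now over $\mathbb{C}$: write down the Gram matrix of the trace pairing $\langle\alpha,\beta\rangle=Tr(\alpha\beta)$ with respect to $\mathcal{B}'$, specialize it at $\phi_{x,y,z}$, and read off the determinant. The crucial structural observation is that the pairing matrix exhibited in the determinant computation for $S^{-1}K_N(\Sigma_{1,0})$ has \emph{all} of its entries in $\chi(\Sigma_{1,0})$ (they are $2(0,0)_T$, $(0,0)_T$, and $\pm(N,0)_T,\pm(0,N)_T,\pm(N,N)_T$), so we may apply $\phi$ to it entrywise. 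Consequently the determinant of the specialized matrix is $\phi$ of the already-computed determinant
\[2(0,0)_T\big((0,0)_T^2-(N,0)_T^2\big)^{\frac{N-1}{2}}\big((0,0)_T^2-(0,N)_T^2\big)^{\frac{N-1}{2}}\big((0,0)_T^2-(N,N)_T^2\big)^{\left(\frac{N-1}{2}\right)^2}.\]

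The first real step is to evaluate $\phi$ on the four central skeins appearing here, and this is the step that needs care, since the relevant value is the \emph{small} trace and not its Tchebychev transform. The threading isomorphism $\tau$ carries the generators $(1,0)_T,(0,1)_T,(1,1)_T\in\chi(\Sigma_{1,0})\cong K_1(\Sigma_{1,0})$ to $(N,0)_T,(0,N)_T,(N,N)_T$, and $\phi$ corresponds to $\rho$ under the bijection between places and trace-equivalence classes; hence $\phi((N,0)_T)=-tr(\rho((1,0)))=x$, $\phi((0,N)_T)=y$, $\phi((N,N)_T)=z$, while $\phi((0,0)_T)=2$ because $(0,0)_T$ is twice the empty skein. (It is important that $\phi((N,0)_T)=x$ and not $T_N(x)$: the threaded curve $(N,0)_T$ pulls back along $\tau$ to $\eta_{(1,0)}$, not to $\eta_{(N,0)}$.) Substituting, the specialized determinant becomes
\[4\,(4-x^2)^{\frac{N-1}{2}}(4-y^2)^{\frac{N-1}{2}}(4-z^2)^{\left(\frac{N-1}{2}\right)^2},\]
which is nonzero precisely when none of $x,y,z$ equals $\pm 2$.

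It remains to argue that this specialized matrix genuinely computes the trace pairing $\sigma_\phi$ on $K_N(\Sigma_{1,0})_\phi$ with respect to an honest basis, and this is where the main obstacle lies, since $K_N(\Sigma_{1,0})$ is not free over $\chi(\Sigma_{1,0})$. Here the hypothesis $z\neq\pm 2$ does double duty: applying $T_2$ gives $\phi((2N,2N)_T)=T_2(z)=z^2-2$, so the denominator $(2N,2N)_T-2$ produced when solving \eqref{quad} specializes to $z^2-4\neq 0$; thus \eqref{quad} survives specialization and rewrites every element of $\mathcal{B}-\mathcal{B}'$ as a $\mathbb{C}$-combination of $\mathcal{B}'$, so the images of $\mathcal{B}'$ span $K_N(\Sigma_{1,0})_\phi$. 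Because $Tr$ is $\chi$-linear, $Tr(I_\phi)\subseteq\ker\phi$, so $\phi\circ Tr$ descends to the quotient and agrees with the normalized trace $Tr_\phi$ there; this is what makes the entrywise-specialized matrix equal to the Gram matrix of $\sigma_\phi$ on the spanning set $\mathcal{B}'$. Finally, nonvanishing of the $N^2\times N^2$ determinant forces the images of $\mathcal{B}'$ to be linearly independent as well, since a relation $\sum c_{a,b}\,\overline{(a,b)_T}=0$ would, upon pairing against each basis vector, make the Gram matrix singular. Hence $\mathcal{B}'$ descends to a basis, $\sigma_\phi$ is nondegenerate, and $K_N(\Sigma_{1,0})_\phi$ is a Frobenius algebra.

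I expect the delicate part to be the bookkeeping in the last paragraph — verifying that specialization commutes with left multiplication, the trace, and the pairing so that the matrix over $\chi(\Sigma_{1,0})$ specializes to the Gram matrix over $\mathbb{C}$, and confirming that the only denominators one is ever forced to invert are the powers of $4-z^2$ coming from \eqref{quad}. Once these compatibilities are in place, the result is an immediate substitution into the determinant already in hand.
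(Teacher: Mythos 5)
Your route is the paper's route: the paper offers no proof of this theorem beyond the sentence preceding it, namely that the determinant of the pairing already computed with respect to $\mathcal{B}'$ is nonzero after specialization; and your evaluation of that specialization is correct, including the one genuinely subtle point that $\phi((N,0)_T)=x$, $\phi((0,N)_T)=y$, $\phi((N,N)_T)=z$ (and not $T_N(x)$, etc.), which gives $4(4-x^2)^{\frac{N-1}{2}}(4-y^2)^{\frac{N-1}{2}}(4-z^2)^{\left(\frac{N-1}{2}\right)^2}$. Your descent of $\phi\circ Tr$ to the quotient and the Gram-matrix argument for linear independence of the images of $\mathcal{B}'$ are also sound, and they supply bookkeeping that the paper silently omits.

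The genuine gap is in the spanning step, and it sits exactly where you said the main obstacle lies. Solving \eqref{quad} for the term whose coefficient is $2-(2N,2N)_T$, i.e.\ for $(p,q-N)_T$, only disposes of the elements of $\mathcal{B}-\mathcal{B}'$ with negative second coordinate: taking $p=a$, $q=b+N$ expresses $(a,b)_T$ ($b<0$) through $(a,b+N)_T$ and $(N-a,-b)_T$, which do lie in $\mathcal{B}'$. It does not dispose of $(a,N)_T$ with $1\leq a\leq\frac{N-1}{2}$ or $(N,b)_T$ with $1\leq b\leq\frac{N-1}{2}$: forcing $(a,N)_T$ to be the $(p,q-N)_T$-term requires $q=2N$, and then the other two skeins in \eqref{quad} are $(a,2N)_T$ and $(N-a,-N)_T$, which are not in $\mathcal{B}'$. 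For these cases the paper's own prescription makes $(a,N)_T$ the $(p,q)_T$-term and $(N,b)_T$ the $(N-p,N-q)_T$-term, so one is forced to invert their coefficients $(2N,N)_T-(0,N)_T$ and $(N,0)_T-(N,2N)_T$; hence your claim that the only denominators ever needed are powers of $4-z^2$ is false. The gap can be closed under the stated hypothesis, but it takes a further computation: pulling back by $\Theta^*\circ\Phi$, one finds $\phi\bigl((2N,N)_T-(0,N)_T\bigr)=(\mu+\mu^{-1})-(\lambda^2\mu+\lambda^{-2}\mu^{-1})$, which vanishes only when $\lambda^2=1$ or $\lambda^2\mu^2=1$, i.e.\ only when $x=\pm2$ or $z=\pm2$; symmetrically $\phi\bigl((N,0)_T-(N,2N)_T\bigr)$ vanishes only when $y=\pm2$ or $z=\pm2$. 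With those two nonvanishing checks added, every element of $\mathcal{B}-\mathcal{B}'$ does specialize into the span of $\mathcal{B}'$ and the rest of your argument goes through; note that this is precisely where $x\neq\pm2$ and $y\neq\pm2$ enter the spanning argument, so $z\neq\pm2$ does not do the ``double duty'' alone.
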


\section{The once punctured torus }

Let $\delta$ denote the blackboard framed link based on a simple closed curve that is parallel to the boundary of $\Sigma_{1,1}$. The skein $\eta=\delta+(A^2+A^{-2})$ was introduced by Bullock and Przytycki in \cite{F}.   The inclusion of $\Sigma_{1,1}$ into $\Sigma_{1,0}$ obtained by gluing in a disk, induces a homomorphism of skein algebras.  If $(\eta)$ denotes the principal ideal of $K_N(\Sigma_{1,1})$ generated by $\eta$ then the inclusions form  a short exact sequence, 
\[\begin{CD} 0 @>>> (\eta) @>>> K_N(\Sigma_{1,1}) @>>> K_N(\Sigma_{1,0}) @>>> 0  \end{CD}.\]

The sequence splits as any two simple closed curves in $\Sigma_{1,1}$ that get mapped by inclusion to isotopic, nontrivial simple closed curves in $\Sigma_{1,0}$ are in fact isotopic in $\Sigma_{1,1}$. Hence it makes sense to talk about the skein $(p,q)_T$ in $\Sigma_{1,1}$.  The skeins $\eta^k(p,q)_T$ where $k$ ranges over the non-negative integers and $(p,q)_T$ ranges overs pairs of integers so that that $p\geq 0$ and if $p=0$, $q\geq 0$ is a basis for $\Sigma_{1,1}$ over the complex numbers.  

As a consequence of the splitting of the sequence, any skein in $K_N(\Sigma_{1,1})$ can be written as $\sum_{i,j}\alpha_{i,j}(p_i,q_i)_T+ \epsilon$ where $\epsilon \in (\eta)$. Similar to the torus case, define the weight of $\eta^k(p,q)_T$ to be $|p|+|q|$.  Define the {\bf weight} of a  skein to be the maximum weight of an $\eta^k(p,q)$ appearing with nonzero coefficient in the expansion of the skein in terms of the basis $\eta^k(p,q)$.

\begin{theorem}[Product to Sum Formula] \cite{E}  If $(p,q), (r,s) \in K_N(\Sigma_{1,1})$ then
\[(p,q)_T*(r,s)_T=A^{\left|\begin{matrix} p & q \\ r & s \end{matrix}\right|}(p+r,q+s)_T +A^{-\left|\begin{matrix} p & q \\ r & s \end{matrix}\right|}(p-r,q-s)_T +\epsilon\] where
$\epsilon\in (\eta)$ and the weight of $\epsilon$ is less than or equal to $|p|+|q|+|r|+|s|-4$. \end{theorem}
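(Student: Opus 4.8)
The plan is to separate the statement into its two assertions: that the error $\epsilon$ lies in the ideal $(\eta)$, and that its weight is at most $|p|+|q|+|r|+|s|-4$. The first is essentially formal. The boundary-parallel skein $\delta$ can be isotoped into a collar of $\partial\Sigma_{1,1}$, so $\delta$, and hence $\eta$, is central; thus $(\eta)=\eta K_N(\Sigma_{1,1})$ and membership in $(\eta)$ is exactly membership in the kernel of the algebra quotient $K_N(\Sigma_{1,1})\to K_N(\Sigma_{1,0})$ coming from the split exact sequence. Writing $\Delta=ps-qr$ for the determinant, that map sends $(p,q)_T\mapsto(p,q)_T$ and the product-to-sum formula holds on the nose in $K_N(\Sigma_{1,0})$, so the element $\epsilon:=(p,q)_T*(r,s)_T-A^{\Delta}(p+r,q+s)_T-A^{-\Delta}(p-r,q-s)_T$ maps to $0$ and therefore lies in $(\eta)$.

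The substance is the weight bound, for which I would compute the product geometrically. First I would reduce to the case $\gcd(p,q)=\gcd(r,s)=1$: since $(p,q)_T=T_d((p/d,q/d))$ threads the primitive curve, an induction using the recursion $T_{n+1}=T_1T_n-T_{n-1}$ together with the torus formula bootstraps the cabled case from the primitive one while carrying the weight bookkeeping along, and the degenerate case $\Delta=0$ of parallel classes is handled by the single-slope (annulus-type) product-to-sum. For primitive classes, represent $(p,q)$ and $(r,s)$ by simple closed curves in minimal position; they meet transversely in $D=|\Delta|$ points. Stacking one over the other and expanding every crossing by the Kauffman bracket relation produces $2^D$ states. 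The two \emph{coherent} states, whose smoothings join the two curves into a single connected curve of class $(p+r,q+s)$, respectively $(p-r,q-s)$, create no extra circles and contribute $A^{\Delta}$ and $A^{-\Delta}$; these are exactly the two displayed main terms.

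The main obstacle is to control the remaining $2^D-2$ states. Every such state either lowers the homology class of the resulting essential multicurve or splits off an inessential component, which on the punctured torus is either null-homotopic, contributing the scalar $-(A^2+A^{-2})$, or boundary-parallel, contributing $\delta=\eta-(A^2+A^{-2})$. The key geometric claim I must establish is that any deviation from a coherent state forces at least one pinching \emph{around the puncture}, and that this drops the homological weight of the surviving essential part by at least $4$; the factor $4$ reflects that the cheapest nontrivial deviation pinches off a single boundary-parallel circle. Verifying precisely which smoothings encircle the puncture and how much homological length they consume is the technical heart of the argument. Once it is in hand, the $k=0$ part of the expansion reassembles, by comparison with the exact torus formula, into precisely the two main terms, while every contribution carrying a genuine power of $\eta$ has essential weight $\le|p|+|q|+|r|+|s|-4$; collecting terms and using the centrality of $\eta$ then expresses $\epsilon$ in the basis $\eta^k(p',q')_T$ with $k\ge1$ and $|p'|+|q'|\le|p|+|q|+|r|+|s|-4$, completing the proof.
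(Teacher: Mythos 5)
Your opening step is sound: granting the split exact sequence $0\to(\eta)\to K_N(\Sigma_{1,1})\to K_N(\Sigma_{1,0})\to 0$ that the paper quotes from Bullock--Przytycki, the quotient map sends $(p,q)_T$ to $(p,q)_T$, the Frohman--Gelca formula holds exactly in $K_N(\Sigma_{1,0})$, and so the defect $\epsilon$ lies in $(\eta)$. For calibration: the paper itself gives no proof of this theorem to compare against --- it is cited from Bloomquist--Frohman \cite{E} --- and the closest in-paper argument is the strand-counting sketch used for the lemma that follows it (the bound on powers of $\eta$), which is essentially the geometric picture you invoke.

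The genuine gap is that the weight bound, which is the entire nontrivial content of the statement, is never proved: you explicitly defer ``the technical heart,'' and the key claim you state in its place is false. It is not true that every deviation from a coherent state pinches a circle around the puncture: non-coherent states typically produce essential multicurves of lower homology class with no boundary-parallel component, and such terms do not lie in $(\eta)$ at all --- they are exactly what reassembles powers of primitive curves into the Chebyshev-threaded terms $(p\pm r,q\pm s)_T$. (Relatedly, the coherent smoothing is not ``a single connected curve of class $(p+r,q+s)$''; it is a multicurve with $\gcd(p+r,q+s)$ parallel components, i.e.\ a power, not a Chebyshev, of the primitive curve, which is precisely where the cancellations live.) What the bound actually needs is a different, cleaner count: in the square-with-corners-removed model, the product diagram meets the vertical edge $|p|+|r|$ times and the horizontal edge $|q|+|s|$ times; smoothing crossings and discarding trivial circles never increases these counts; and a boundary-parallel circle consumes two strands from each family. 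Hence any state contributing a term $\eta^k(a,b)_T$ with $k\geq 1$ satisfies $|a|+|b|\leq(|p|+|r|-2)+(|q|+|s|-2)$, which is the asserted bound; moreover this count applies directly to threaded, non-primitive inputs (each diagram in the expansion of $T_d$ crosses the edges no more often than the top power does), so your unproved ``weight bookkeeping'' induction on $T_{n+1}=T_1T_n-T_{n-1}$ is unnecessary. As written, your proposal asserts the conclusion at exactly the point where the proof should be.
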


\begin{lemma}
For $\epsilon$ as in the previous theorem the highest power of $\eta$ appearing in $\epsilon$ is less than or equal to $\big\lfloor \frac{min\{|p|+|r|,|q|+|s|\}}{2} \big\rfloor$.
\end{lemma}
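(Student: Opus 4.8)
The plan is to compute the product $(p,q)_T \ast (r,s)_T$ directly by resolving crossings and to read off the power of $\eta$ from the number of boundary--parallel components that appear. Represent $(p,q)_T$ and $(r,s)_T$ by geodesic (straight--line) multicurves on the flat torus, threaded by Tchebyshev polynomials, and stack them to form a diagram $D$ on $\Sigma_{1,1}$. Resolving every crossing of $D$ with the Kauffman bracket relation expresses the product as an $A$--weighted sum over \emph{states}, each state being a crossingless simple diagram. A component of a state bounds a disk (a \emph{trivial} loop, contributing $-(A^2+A^{-2})$), is isotopic to an essential $(a,b)$ curve, or is isotopic to the boundary $\delta$ (a \emph{boundary--parallel} loop). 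Since $\delta$ is central and $\delta=\eta-(A^2+A^{-2})$, a state containing $j$ boundary--parallel loops contributes $\delta^{\,j}$ times an essential simple diagram, and this is where all powers of $\eta$ come from.

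First I would reduce the statement to a count of boundary--parallel loops. Any essential simple diagram is a disjoint union of parallel copies of a primitive curve, hence a linear combination of skeins $(a,b)_T$, all of which have $\eta$--power $0$. Because $\eta$ is central, $\eta\cdot\eta^{k}(a,b)_T=\eta^{k+1}(a,b)_T$, so multiplication by $\eta$ raises the $\eta$--power by exactly one; consequently $\delta^{\,j}=(\eta-(A^2+A^{-2}))^{j}$ times an essential diagram has $\eta$--power at most $j$. The two leading terms $(p\pm r,q\pm s)_T$ of the Product to Sum Formula have $\eta$--power $0$, so every basis term of $\epsilon$ of positive $\eta$--power arises from a state with at least that many boundary--parallel loops. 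Therefore it suffices to show that no state of $D$ contains more than $\big\lfloor \tfrac{1}{2}\min\{|p|+|r|,|q|+|s|\}\big\rfloor$ boundary--parallel loops.

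The key geometric step is to place the puncture at the point $\ell\cap m$, where $\ell=(1,0)$ and $m=(0,1)$ are the standard longitude and meridian, and to arrange the multicurves to avoid this point while meeting $\ell$ and $m$ minimally and crossing one another only off of $\ell$ and $m$. Then $D$ meets $\ell$ in exactly $|q|+|s|$ points and $m$ in exactly $|p|+|r|$ points, and since every crossing of $D$ lies off of $\ell$ and $m$, \emph{each state meets $\ell$ and $m$ in these same point sets}. Now let $C$ be a boundary--parallel component of a state. Cutting $\Sigma_{1,1}$ along $\ell$ yields an annulus on whose boundary the puncture lies, so no interior loop can encircle the puncture; hence $C\cap\ell\neq\varnothing$. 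Moreover $C$ bounds a disk in the closed torus, so it is null--homologous and meets $\ell$ in an even number of points; being nonempty, this number is at least $2$. The identical argument applies to $m$. The boundary--parallel components of a state are disjoint, so if there are $j$ of them they account for at least $2j$ of the $|q|+|s|$ points on $\ell$ and at least $2j$ of the $|p|+|r|$ points on $m$, giving $j\le\lfloor(|q|+|s|)/2\rfloor$ and $j\le\lfloor(|p|+|r|)/2\rfloor$ simultaneously, which is the desired bound.

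The main obstacle is this last geometric count: one must be certain that boundary--parallel loops genuinely cannot avoid the reference curves, which is exactly what forces the puncture to sit at $\ell\cap m$ rather than at a generic point (at a generic point a small loop could encircle the puncture while avoiding $\ell$, and the argument would fail). One must also handle the Tchebyshev threadings: here I would note that $(p,q)_T$ expands into at most $d=\gcd(p,q)$ parallel copies of the primitive curve, so the $\ell$-- and $m$--intersection totals of any state are bounded by $|q|+|s|$ and $|p|+|r|$ as claimed, with lower threading terms producing strictly fewer intersections and hence fewer boundary--parallel loops. Everything else is the bookkeeping of converting $\delta^{\,j}$ into the $\eta^{k}(a,b)_T$ basis, which the centrality of $\eta$ makes routine.
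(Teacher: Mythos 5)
Your proof is correct and is essentially the paper's own argument made rigorous: the paper's ``square with corners removed'' picture is exactly your setup with the puncture at $\ell\cap m$, its ``strands through the sides'' are your intersection counts $|p|+|r|$ and $|q|+|s|$, and its ``borrow two strands from every side'' is your observation that each boundary-parallel component must meet each of $\ell$ and $m$ in at least two points. Your additions (the annulus-cutting and null-homology arguments, and the handling of the Tchebychev threading) are careful justifications of steps the paper leaves implicit, not a different route.
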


\begin{proof}
Visualize the punctured torus as the square with all four corners removed and this simply follows from the fact that when multiplying, you will have $|p|+|r|$ strands through the top and bottom of your square and $|q|+|s|$ through the sides. To produce an element of $(\eta)$ you need to borrow two strands from every side of your square and hence this process will be stopped when or before we reach $min\{|p|+|r|,|q|+|s|\}$. Since we need two strands from every side of our square to produce an element of $(\eta)$ then the degree must be less than or equal to $\big\lfloor \frac{min\{|p|+|r|,|q|+|s|\}}{2}\big\rfloor$.
\end{proof}

The center $Z(K_N(\Sigma_{1,1}))$ of $K_N(\Sigma_{1,1})$ is spanned by all skeins of the form $\eta^k(Na,Nb)_T$.  If $\phi:\chi(\Sigma_{1,1})\rightarrow \mathbb{C}$ is a place, supposing that $\phi(T_N(\delta))=z+z^{-1}$ then choosing any $n$th root $w$ of $z$ extends to a place $\tilde{\phi}:Z(K_N(\Sigma_{1,1}))\rightarrow \mathbb{C}$ that sends $\delta$ to $w$.  This fits in with the concept of a {\bf classical shadow} of a representation of the skein algebra due to Bonahon and Wong \cite{C}.

The skein algebra $K_N(\Sigma_{1,1})$ is the first skein algebra of a surface with boundary that is noncommutative that we have considered. We begin by treating $K_N(\Sigma_{1,1})$ as a ring extension of its center.  

\begin{proposition} The algebra $K_N(\Sigma_{1,1})$ as a module over $Z(K_N(\Sigma_{1,1}))$  is spanned by a lift of the spanning set $\mathcal{B}$ of
$K_N(\Sigma_{1,0})$ over $\chi(\Sigma_{1,0})$. \end{proposition}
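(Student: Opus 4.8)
The plan is to lift the proof of Theorem~\ref{generators} from $K_N(\Sigma_{1,0})$ to $K_N(\Sigma_{1,1})$, exploiting three features of the present setting. Because the boundary curve is central, $\eta$ is central, so every error term produced by the product to sum formula lies in $(\eta)\subseteq Z(K_N(\Sigma_{1,1}))$ and can be absorbed into the coefficient ring. The splitting of the short exact sequence lets me regard each $(p,q)_T$, and each element of $\mathcal B$, as an honest skein in $\Sigma_{1,1}$. Finally, since the center is exactly the span of $\eta^k(Na,Nb)_T$, the quotient map $\pi\colon K_N(\Sigma_{1,1})\to K_N(\Sigma_{1,0})$ restricts to a surjection $Z(K_N(\Sigma_{1,1}))\twoheadrightarrow \chi(\Sigma_{1,0})$ carrying the lifted coefficients back to the coefficients used in Theorem~\ref{generators}. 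Writing $V$ for the $Z(K_N(\Sigma_{1,1}))$-span of the lift of $\mathcal B$, the goal is $V=K_N(\Sigma_{1,1})$.

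First I would run the reductions of Step~1 of Theorem~\ref{generators} verbatim inside $K_N(\Sigma_{1,1})$. Each such reduction rewrites $(p,q)_T$ as a central coefficient of the form $(Na',0)_T$ or $(0,Nb')_T$ times a skein of strictly smaller weight, plus skeins of strictly smaller weight, plus an error $\epsilon\in(\eta)$ whose weight is at most $|p|+|q|-4$. Since the error weight drops, a straightforward induction on the weight $|p|+|q|$ shows that $K_N(\Sigma_{1,1})$ is spanned over its center by the finite collection of $(p,q)_T$ with $0\le p\le N$ and $|q|\le N$; in particular it is a finitely generated module over $Z(K_N(\Sigma_{1,1}))$. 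The only inputs here are the weight bound of the product to sum formula and the fact that, since $\eta$ is central, $\eta^j(p',q')_T=\eta^j\cdot(p',q')_T$ inherits membership in $V$ from $(p',q')_T$.

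Next I would use the identities of Step~2 of Theorem~\ref{generators} together with Equation~\ref{quad} to rewrite the finitely many generators lying in $\mathcal A\setminus\mathcal B$ in terms of the lift of $\mathcal B$. Lifted to $\Sigma_{1,1}$ these identities hold only modulo $(\eta)$: enumerating the generators outside $\mathcal B$ as $g_1,\dots,g_n$, each relation reads $g_i=v_i+\eta h_i$ with $v_i\in V$ and $h_i\in K_N(\Sigma_{1,1})$, and rewriting $h_i$ through Step~1 expresses it over the center in the $g_j$ and the lift of $\mathcal B$. Collecting these into a matrix equation $(I-\eta M)\mathbf g=\mathbf v$ with $M\in M_n(Z(K_N(\Sigma_{1,1})))$ and $\mathbf v\in V^n$, Cramer's rule gives $(1+\eta u)g_i\in V$ for some $u\in Z(K_N(\Sigma_{1,1}))$. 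Iterating, $g_i\equiv(-\eta u)^m g_i\pmod V$ for all $m$; since $\{\eta^k(p,q)_T\}$ is a $\mathbb C$-basis, the algebra is $\eta$-adically separated, so $(-\eta u)^m g_i\to 0$ and $g_i$ is an $\eta$-adic limit of elements of $V$. Because $Z(K_N(\Sigma_{1,1}))$ is Noetherian and $V$ is a finitely generated submodule of a finitely generated module, Artin--Rees makes $V$ closed in the $\eta$-adic topology, whence $g_i\in V$ and therefore $V=K_N(\Sigma_{1,1})$.

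The main obstacle is exactly this last passage. Unlike the Step~1 reductions, the Step~2 rewrites do not decrease weight, since their error terms can have weight as large as $4N$, so no naive induction on weight closes and one cannot simply lift the identities of Theorem~\ref{generators} as equalities. What rescues the argument is that every error lies in $(\eta)$, guaranteed by the product to sum formula and sharpened by the preceding lemma bounding the power of $\eta$; this makes the obstruction to $V=K_N(\Sigma_{1,1})$ $\eta$-adically small, and combined with the finite generation from the first step and the separatedness coming from the monomial basis it forces the obstruction to vanish. I expect the delicate bookkeeping to be verifying that the Step~2 main terms genuinely land in the lift of $\mathcal B$, and checking that the center is Noetherian so that Artin--Rees applies, the latter following from $\chi(\Sigma_{1,0})$ being a finitely generated $\mathbb C$-algebra together with the adjunction of the central element $\eta$.
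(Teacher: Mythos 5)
Your Step 1 is, in essence, the paper's entire proof of this proposition. The paper inducts on the complexity of a skein (its weight, the top power of $\eta$ occurring at that weight, and the number of such terms), rewrites a term $\eta^n(a,b)_T$ with $|a|$ or $|b|$ larger than $N-1$ using the product-to-sum formula against the central skeins $(0,dN)_T$ or $(dN,0)_T$, and absorbs the error $\epsilon\in(\eta)$, whose weight is at most $|a|+|c|+dN-4$ and hence strictly smaller, into the inductive hypothesis --- exactly your weight induction. What is worth knowing is that the paper \emph{stops} where your Step 1 stops: once all indices satisfy $|a|,|b|\le N-1$ the proof ends, and no analogue of Step 2 of Theorem \ref{generators} is ever attempted at the level of the center; the reduction to the smaller set $\mathcal{B}'$ is carried out only later (Theorem 6.5), in the localized algebra, where central elements such as $2-(2N,2N)_T$ become invertible. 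Your diagnosis that the Step 2 identities do not lift with any useful weight control (their errors can have weight near $4N$) is correct, and it is precisely why the integral statement needs something beyond the weight induction.

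The genuine gap is your final deduction. From $(I-\eta M)\mathbf{g}=\mathbf{v}$ and Cramer's rule you correctly obtain $(1+\eta u)g_i\in V$ with $u$ central, and correctly conclude $g_i\in V+\eta^mK$ for all $m$. But the claim that Artin--Rees makes $V$ closed in the $\eta$-adic topology is false as a general principle, and it begs exactly the question at hand. Closedness of $V$ is equivalent to $\eta$-adic separatedness of $K/V$, and by the Krull intersection theorem $\bigcap_m\eta^m(K/V)$ is precisely the set of elements of $K/V$ annihilated by some element of $1+\eta Z$; Artin--Rees and Noetherianness do not make this set vanish. Your own computation exhibits $\overline{g_i}$ as such an element (it is killed by $1+\eta u$), so concluding $\overline{g_i}=0$ from its membership in the intersection is circular. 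A concrete failure of the principle you invoke: take $Z=K=\mathbb{C}[x]$, $V=(1+x)\mathbb{C}[x]$, $\eta=x$; then $Z$ is Noetherian, all modules are finitely generated, $(1+x)\cdot 1\in V$, and
\[ 1=(1+x)\sum_{i=0}^{m-1}(-x)^i+(-1)^mx^m\in V+x^mK \quad\text{for every } m, \]
yet $1\notin V$. Note also that separatedness of $K$ itself, which you do establish from the monomial basis $\{\eta^k(p,q)_T\}$, is not the issue; what is needed is separatedness of the quotient $K/V$, equivalently the absence of $(1+\eta Z)$-torsion in $K/V$, and that is essentially the statement you set out to prove. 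So your argument establishes spanning by the larger finite set (which is all the paper's own proof establishes, and all that is used downstream), but the reduction to a lift of $\mathcal{B}$ over the unlocalized center remains open on your route.
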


\proof By the exact sequence from \cite{F} every skein can be written $\sum_{a,b,k}\alpha_{a,b,k}\eta^k(a,b)_T$ where the $\alpha_{a,b,k}\in \mathbb{C}$ and only finitely many are nonzero.   The complexity of a skein is the ordered triple consisting of its weight, the highest power of $\eta$ appearing in a term that realizes it's weight and the number of terms of highest weight with $\eta$ raised to the highest power. We order the complexity lexicographically.

Suppose that everything with complexity less than $(r,n)$ can be written as a $Z(K_N(\Sigma_{1,1}))$ linear combination of the $(a,b)$ as in the statement of the proposition, and suppose that $\sum_{k,a,b}\alpha_{k,a,b}\eta^k(a,b)_T$ has complexity $(r,n,m)$.  Choose a term $\alpha_{k,a,b}\eta^n(a,b)_T$ of weight $r$.
If $|b|>N-1$ then either we can write $b=c+N$ where $|c|<|b|$ and either $|c-N|<|c|$ or $|c|<|N|$, or we can write $b=c-N$ where $|c|<|b|$ and either $|c+N|<|c|$ or $|c+N|<N$.

Using the product to sum formula
\[ \eta^n(0,dN)_T*(a,c)_T-\eta^nA^{-dNa}(a-Nd,c-Nd)_T=A^{dNc}\eta^k(a,b)_T+\epsilon.\]
Notice the weight of $\epsilon$ less than or equal to $|a|+|c|+|dN|-4$. 

Substituting in we have reduced the complexity  by lowering the number of terms of weight $r$ with $\eta^n$.  If $b$ is between $-N+1$ and $N-1$ and $|a|>N-1$ perform the analogous rewriting to reduce the complexity.  \qed

Recall that if $S\subset R$ is a subring of the integral domain $R$ and $R$ is an integral extension of $S$ then if $S$ is a field then $R$ is a field.  The center of $K_N(\Sigma_{1,1})$ is also spanned by all $\delta^k(Np,Nq)_T$ where $p,q\in \mathbb{Z}$.  The reason is that $\delta$ is in the center, and $\eta^k$ is a polynomial in $\delta$ with lead coefficient $\delta^k$. Next,  $\chi(\Sigma_{1,1})$ is spanned by all $T_{aN}(\delta)(Np,Nq)_T$.  From this we see that $Z(K_N(\Sigma_{1,1}))$ is an integral extension of degree $N$ of $\chi(\Sigma_{1,1})$, where the added element $\delta$ satisfies a monic degree $N$ polynomial with coefficients in $\chi(\Sigma_{1,1})$.
Specificially 
\[ \delta^N=T_N(\delta)-\sum_{i=1}^{\floor{N/2}}(-1)^i\frac{N}{N-i}\binom{N-i}{i}\delta^{N-2i}\] 

This means that the ring of fractions coming from inverting all nonzero elements of $\chi(F)$ is isomorphic to the ring of fractions coming from inverting all elements of $Z(K_N(\Sigma_{1,1}))$.

\begin{theorem}  Letting $S=\chi(\Sigma_{1,1})-\{0\}$, the algebra $S^{-1}K_N(\Sigma_{1,1})$ has dimension $N^3$ over $S^{-1}\chi(\Sigma_{1,1})$, the function field of the $SL_2\mathbb{C}$-character variety of $\pi_1(\Sigma_{1,1})$. A basis is given by $T_k(\delta)(p,q)_T$ where $k,p,q\in \{0,1,\ldots,N-1\}$. \end{theorem}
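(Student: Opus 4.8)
The plan is to prove the statement by factoring the extension $S^{-1}\chi(\Sigma_{1,1})\hookrightarrow S^{-1}K_N(\Sigma_{1,1})$ through the localized center, i.e. through the tower
\[ S^{-1}\chi(\Sigma_{1,1})\ \subseteq\ S^{-1}Z(K_N(\Sigma_{1,1}))\ \subseteq\ S^{-1}K_N(\Sigma_{1,1}), \]
and computing each step separately. I would show that $S^{-1}Z(K_N(\Sigma_{1,1}))$ is a field of degree $N$ over $S^{-1}\chi(\Sigma_{1,1})$ with basis $T_0(\delta),\ldots,T_{N-1}(\delta)$, and that $S^{-1}K_N(\Sigma_{1,1})$ is a vector space of dimension $N^2$ over $S^{-1}Z(K_N(\Sigma_{1,1}))$ with basis $\{(p,q)_T:p,q\in\{0,\ldots,N-1\}\}$. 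Since $\delta$ is central the products of the two bases are exactly the $T_k(\delta)(p,q)_T$, and multiplicativity of dimension in a tower of fields gives $N\cdot N^2=N^3$.

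For the bottom step, recall it was shown above that $\delta$ satisfies a monic degree-$N$ polynomial over $\chi(\Sigma_{1,1})$ with $T_N(\delta)\in\chi(\Sigma_{1,1})$, and that inverting $S=\chi(\Sigma_{1,1})-\{0\}$ already inverts every nonzero element of $Z(K_N(\Sigma_{1,1}))$. Using the $\mathbb{C}$-basis $\delta^m(Na,Nb)_T$ of the center, together with the fact that the spanning elements $T_{cN}(\delta)(Na,Nb)_T$ of $\chi(\Sigma_{1,1})$ have $\delta$-degrees divisible by $N$, I would check that $Z(K_N(\Sigma_{1,1}))=\bigoplus_{k=0}^{N-1}\delta^k\,\chi(\Sigma_{1,1})$ is free of rank $N$ (the degrees $k+cN$ being pairwise distinct). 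Hence $S^{-1}Z(K_N(\Sigma_{1,1}))$ has dimension exactly $N$; being a domain integral over the field $S^{-1}\chi(\Sigma_{1,1})$ it is itself a field, and the triangular change of basis from $\delta^k$ to $T_k(\delta)$ yields the stated basis.

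The top step is the heart of the argument. Spanning follows from the proposition asserting that $K_N(\Sigma_{1,1})$ over its center is spanned by a lift of the torus spanning set $\mathcal{B}$: after localizing I reduce $\mathcal{B}$ to $\{(p,q)_T:0\le p,q\le N-1\}$ exactly as on the torus, using the image of Equation \ref{quad}, whose denominator $2-(2N,2N)_T$ lies in the center and is invertible in the field $S^{-1}Z(K_N(\Sigma_{1,1}))$; the only new feature is that every product now carries an $\epsilon\in(\eta)$ of strictly smaller weight, which is absorbed by inducting on weight. For independence I would run the maximality argument used to establish linear independence of the torus basis, now with coefficients in the center. Expanding a hypothetical relation $\sum_{p,q}z_{p,q}(p,q)_T=0$ with $z_{p,q}\in Z(K_N(\Sigma_{1,1}))$ (after clearing denominators) in the $\mathbb{C}$-basis $\eta^m(a,b)_T$, the product-to-sum formula shows that, modulo terms of strictly lower weight, each $(Na,Nb)_T(p,q)_T$ contributes the two skeins $(Na\pm p,Nb\pm q)_T$; passing to residues modulo $N$ separates distinct $(p,q)$ just as on the torus, while the central $\delta$-powers merely tag each contribution with a polynomial degree and never cancel across distinct weights. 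A term of maximal weight, and among these of maximal $\delta$-degree, cannot be cancelled, forcing the corresponding coefficient to vanish and, inductively, all $z_{p,q}=0$.

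The main obstacle is precisely this independence argument: one must interleave three filtrations at once — the weight filtration that discards the $(\eta)$-errors, the $\delta$-degree coming from the central factor, and the mod-$N$ residue bookkeeping that distinguishes the directions $(p,q)$. I would also record that $K_N(\Sigma_{1,1})$ is a domain, so that $K_N(\Sigma_{1,1})\to S^{-1}K_N(\Sigma_{1,1})$ is injective and independence over the center is equivalent to independence over its field of fractions; this holds because the associated graded for the weight filtration is governed by the undeformed torus product-to-sum formula tensored with the polynomial ring $\mathbb{C}[\delta]$, which has no zero divisors.
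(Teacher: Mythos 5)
Your overall route is the same as the paper's: factor the extension through the center, show $S^{-1}Z(K_N(\Sigma_{1,1}))$ has basis $T_k(\delta)$, $0\leq k\leq N-1$, over $S^{-1}\chi(\Sigma_{1,1})$, show that $(p,q)_T$, $0\leq p,q\leq N-1$, is a basis of $S^{-1}K_N(\Sigma_{1,1})$ over the localized center, and multiply dimensions in the tower. Your bottom step and the spanning half of your top step agree with the paper's proof and are fine. The genuine gap is in your linear-independence argument, which is precisely the step the paper's own proof leaves implicit.

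You assert that, modulo terms of \emph{strictly lower weight}, each product $(Na,Nb)_T*(p,q)_T$ contributes only the two skeins $(Na\pm p,Nb\pm q)_T$, and you then select a term of maximal weight, breaking ties by maximal $\delta$-degree. But the product-to-sum formula for $\Sigma_{1,1}$ only bounds the weight of $\epsilon$ by $|p|+|q|+|r|+|s|-4$, a bound relative to the total weight of the two factors, not relative to the weight of the two leading skeins; when signs cancel, the leading terms are much lighter than this bound allows the error to be. For instance (taking $N\geq 5$) in $(N,-N)_T*(3,3)_T$ the leading terms $(N+3,-N+3)_T$ and $(N-3,-N-3)_T$ both have weight $2N$, while $\epsilon$ is only constrained to have weight at most $2N+2$. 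So the maximal-weight term of your expanded relation may be a correction term, about whose coefficient nothing is known; and even when it is a leading term, corrections coming from \emph{other} products can land on the same basis element $\eta^k(c,d)_T$. Your tiebreak also points the wrong way: by the exact sequence, corrections from a product at $\eta$-power $m$ lie in $(\eta^{m+1})$, so they carry \emph{higher} powers of $\eta$, and choosing maximal $\delta$-degree selects exactly where the uncontrolled terms accumulate. The repair is to use the $\eta$-adic filtration in the opposite direction: write $z_{p,q}=\sum_{m,a,b}\alpha_{p,q,m,a,b}\,\eta^m(Na,Nb)_T$ and let $m_0$ be the \emph{minimal} $\eta$-exponent occurring with a nonzero coefficient. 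Since $\eta^m(Na,Nb)_T*(p,q)_T$ equals $\eta^m$ times the two leading skeins plus an element of $(\eta^{m+1})$, the coefficient of every basis element $\eta^{m_0}(c,d)_T$ in your relation receives no contribution from any correction; the $\eta^{m_0}$-graded piece is therefore formally the torus relation, and the maximality-plus-residues argument of Lemma 5.4 applies verbatim to force all $\alpha_{p,q,m_0,a,b}=0$, contradicting minimality. (The same misreading of the weight bound undermines your closing claim that the weight-associated graded algebra is the torus algebra tensored with $\mathbb{C}[\delta]$; but you do not need $K_N(\Sigma_{1,1})$ to be a domain, since clearing denominators already converts independence over $S^{-1}Z(K_N(\Sigma_{1,1}))$ into independence over $Z(K_N(\Sigma_{1,1}))$.) With these corrections your argument is complete and fills in the step the paper glosses over.
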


\proof  The proof is in two steps.  First as a vector space over field of fractions of the center, $K_N(\Sigma_{1,1})$ has dimension $N^2$ with basis $(p,q)_T$ where $p,q\in \{0,1,\ldots, N-1\}$.  The product to sum formula for $\Sigma_{1,1}$ says that the elements of $\mathcal{B}$ satisfy relations whose leading term is the same as in $\Sigma_{1,0}$ plus lower complexity terms that are multiplied by powers of $\eta$. By continually eliminating the lead term, we terminate with an expression for every element of $\mathcal{B}'$ as a linear combination with coefficients in $S^{-1}Z(K_N(\Sigma_{1,1})$.

Since $S^{-1}Z(K_N(\Sigma_{1,1}))$ is a vector space over $S^{-1}\chi(\Sigma_{1,1})$ with basis $T_k(\delta)$ where $k\in \{0,1,\ldots,N-1\}$ we get the desired result by expanding the coefficients.  \qed

Next we need to compute the trace of the matrices corresponding to left multiplicaiton by $T_k(\delta)(p,q)_T$ where $k,p,q$ range over $\{0,1,\ldots,N-1\}$
Since any simple curve $(p/d,q/d)$ can be taken to $(1,0)$ by an orientation preserving homeomorphism ( and that homeomorphism induces an automorphism of $K_N(\Sigma_{1,1})$. We only need to compute the trace of $T_k(\delta)*(d,0)_T)$.  This is pretty easy as we can use the basis $T_i(\delta)(p,0)_T*(0,q)_T$ where $i,q$ and $q$ range from $\{0,\ldots,N-1\}$.  From the product to sum formula,

\begin{align*}
(T_k(\delta)*(d,0)_T)(T_i(\delta)(p,0)_T*(0,q)_T) &=T_{k+i}(\delta)(p+d,0)_T*(0,q)_T \\
&+T_{k+i}(\delta)(|p-d|,0)_T*(0,q)_T \\
&+T_{|k-i|}(\delta)(p+d,0)_T*(0,q)_T\\
&+T_{|k-i|}(\delta)(|p-d|,0)_T*(0,q)_T.
\end{align*}

 This  needs to be rewritten when  $p+d\geq N$ or $k+i\geq N$, but it works just in the case of the annulus.

\begin{theorem} The normalized trace of $\sum \alpha_{k,p,q}T_k(\delta)(p,q)_T$ where the $\alpha_{k,p,q}\in S^{-1}\chi(\Sigma_{1,1})$, can be computed by deleting all terms where $N$ does not divide all of $k$, $p$, and $q$. \end{theorem}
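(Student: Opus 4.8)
The plan is to exploit the $S^{-1}\chi(\Sigma_{1,1})$–linearity of the normalized trace to reduce to a single basis element, and then to recognize the left–multiplication operator as a tensor product of operators already understood on the annulus. By linearity of $Tr$ it suffices to prove that $Tr(T_k(\delta)(p,q)_T)=T_k(\delta)(p,q)_T$ when $N\mid k$, $N\mid p$ and $N\mid q$, and that $Tr(T_k(\delta)(p,q)_T)=0$ otherwise.

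First I would remove the dependence on the particular slope $(p,q)$. Writing $d=\gcd(p,q)$, there is an orientation–preserving homeomorphism $h$ of $\Sigma_{1,1}$ carrying the simple closed curve $(p/d,q/d)$ to $(1,0)$, hence $(p,q)_T$ to $(d,0)_T$; since $h$ fixes the boundary it fixes $\delta$, so it sends $T_k(\delta)(p,q)_T$ to $T_k(\delta)(d,0)_T$. As in the torus argument, $h$ induces an algebra automorphism $h_*$ of $S^{-1}K_N(\Sigma_{1,1})$ under which $L_{h_*\alpha}=h_*L_\alpha h_*^{-1}$, so $Tr$ is $h_*$–invariant. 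Because $N\mid p$ and $N\mid q$ is equivalent to $N\mid d$, it is then enough to compute $Tr(T_k(\delta)(d,0)_T)$ and show it equals $T_k(\delta)(d,0)_T$ when $N\mid k$ and $N\mid d$, and vanishes otherwise.

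Next I would choose the basis $T_i(\delta)(p,0)_T\ast(0,q)_T$ with $i,p,q\in\{0,\dots,N-1\}$ and read the matrix of left multiplication by $T_k(\delta)(d,0)_T$ off the product–to–sum computation recorded just above the statement: since $\delta$ is central and the curves $(d,0)$, $(p,0)$ are parallel (so the relevant determinant vanishes and there is no $(\eta)$ correction), multiplication sends $T_i(\delta)\mapsto T_{k+i}(\delta)+T_{|k-i|}(\delta)$ and $(p,0)_T\mapsto (p+d,0)_T+(|p-d|,0)_T$ while leaving the factor $(0,q)_T$ fixed. Thus the operator is the tensor product $M_k\otimes M_d\otimes I_N$, where $M_k$ and $M_d$ are precisely the annulus multiplication matrices for $T_k$ and $T_d$ and $I_N$ is the identity on the $N$–dimensional span of the $(0,q)_T$. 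The only wrinkle is the wraparound when $k+i\ge N$ or $p+d\ge N$, where one reduces using the central elements $T_N(\delta)$ and $(N,0)_T$; this is carried out verbatim as in the annulus and affects only off–diagonal entries, so it does not disturb the diagonal bookkeeping.

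Finally I would compute the trace multiplicatively. Since the trace of a tensor product is the product of the traces, $\operatorname{tr}(M_k\otimes M_d\otimes I_N)=\operatorname{tr}(M_k)\operatorname{tr}(M_d)\cdot N$, and dividing by $N^3$ gives $Tr=\tfrac1{N^2}\operatorname{tr}(M_k)\operatorname{tr}(M_d)$. By the annulus proposition, $\operatorname{tr}(M_k)=0$ unless $N\mid k$, in which case $\operatorname{tr}(M_k)=N\,T_k(\delta)$, and likewise $\operatorname{tr}(M_d)=0$ unless $N\mid d$, in which case $\operatorname{tr}(M_d)=N\,(d,0)_T$. Hence $Tr(T_k(\delta)(d,0)_T)$ equals $T_k(\delta)(d,0)_T$ when $N\mid k$ and $N\mid d$ and $0$ otherwise, which by the homeomorphism reduction yields the theorem. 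I expect the main obstacle to be the third step: justifying rigorously that the left–multiplication operator genuinely factors as $M_k\otimes M_d\otimes I_N$ — in particular that no $(\eta)$–corrections intrude, so that the $\Sigma_{1,1}$ product agrees with the purely annular one on these parallel skeins, and that the wraparound reductions preserve the tensor structure so the annulus diagonal–cancellation argument applies unchanged.
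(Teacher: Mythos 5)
Your proposal is correct and takes essentially the same route as the paper: the paper also reduces by an orientation-preserving homeomorphism to computing the trace of $T_k(\delta)*(d,0)_T$, expands left multiplication on the basis $T_i(\delta)(p,0)_T*(0,q)_T$ via the product-to-sum formula (with no $(\eta)$ corrections, since the curves involved are parallel), and handles the wraparound ``just as in the case of the annulus.'' Your explicit factorization of the operator as $M_k\otimes M_d\otimes I_N$ together with multiplicativity of the trace is simply a cleaner packaging of the paper's computation.
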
  

\qed

\begin{theorem} The algebra $S^{-1}K_N(\Sigma_{1,1})$ with the normalized trace $Tr:S^{-1}K_N(\Sigma)\rightarrow S^{-1}\chi(\Sigma_{1,1})$ is Frobenius.\end{theorem}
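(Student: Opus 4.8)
The plan is to deduce nondegeneracy of the normalized trace from a tower decomposition, inserting the fraction field of the center as an intermediate layer between the base field and the algebra. Set $S=\chi(\Sigma_{1,1})-\{0\}$, let $Z=Z(K_N(\Sigma_{1,1}))$ be the center, and let $Z'=S^{-1}Z$. Since $Z$ is an integral extension of $\chi(\Sigma_{1,1})$ (with $\delta$ satisfying the displayed monic degree-$N$ relation), $Z'$ is a finite extension of the field $S^{-1}\chi(\Sigma_{1,1})$ and is itself a field; and by the dimension theorem $S^{-1}K_N(\Sigma_{1,1})$ is an $N^2$-dimensional algebra over $Z'$ with basis $\{(p,q)_T : 0\le p,q\le N-1\}$. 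Because $Z'$ is central, I would factor the normalized trace as $Tr=\tau\circ Tr'$, where $Tr':S^{-1}K_N(\Sigma_{1,1})\to Z'$ is the $Z'$-valued normalized trace (which, exactly as in the torus computation, extracts the $(0,0)_T$-coefficient) and $\tau:Z'\to S^{-1}\chi(\Sigma_{1,1})$ is the annulus-type functional extracting the $T_{Nk}(\delta)$-components. The trace formula of the preceding theorem is precisely this composite, up to an overall nonzero scalar.

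Granting the factorization, the argument splits into two layers. First, $Z'$ is a field and $\tau$ is a nonzero functional, so $Z'$ has no nonzero proper ideals and is trivially Frobenius over $S^{-1}\chi(\Sigma_{1,1})$ via $\tau$. Second, I would prove that $S^{-1}K_N(\Sigma_{1,1})$ is Frobenius over $Z'$ with functional $Tr'$. These combine by transitivity: given $0\ne\alpha$, choose $\alpha'$ with $Tr'(\alpha\alpha')\ne 0$ in $Z'$, and then (since $Z'$ is a field) choose $c\in Z'$ with $\tau(Tr'(\alpha\alpha')\,c)\ne0$. Because $Z'$ is central and $Tr'$ is $Z'$-linear, $Tr'(\alpha(\alpha'c))=Tr'(\alpha\alpha')\,c$, so $Tr(\alpha\,\alpha'c)=\tau(Tr'(\alpha\alpha')\,c)\ne0$. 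Hence no nonzero principal ideal lies in $\ker Tr$, which is the Frobenius condition.

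The real content is therefore the middle layer: nondegeneracy of the $Z'$-bilinear form $\langle\alpha,\beta\rangle'=Tr'(\alpha\beta)$ on the $N^2$-dimensional $Z'$-space. This is the exact analogue of the torus determinant theorem, and I would compute the Gram matrix of $\langle\ ,\ \rangle'$ in the basis $(p,q)_T$ and show its determinant is a nonzero element of $Z'$. The matrix entries come from $(p,q)_T*(r,s)_T$, whose leading terms $A^{ps-qr}(p+r,q+s)_T+A^{qr-ps}(p-r,q-s)_T$ reproduce the closed-torus product-to-sum formula; the torus theorem already shows that the matrix assembled from these leading terms is nonsingular. The one genuinely new feature is the correction $\epsilon\in(\eta)$ present in the $\Sigma_{1,1}$ product-to-sum formula but absent on the closed torus.

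Controlling these $\epsilon$-terms is the main obstacle. I would handle them with a leading-term argument: the weight filtration, together with the lemma bounding the $\eta$-degree of $\epsilon$ by $\lfloor\min\{|p|+|r|,|q|+|s|\}/2\rfloor$, shows that each $\epsilon$ contributes strictly lower weight than the principal terms, and (parallel curves multiplying with no correction, since that minimum is $0$) that the horizontal and vertical subalgebras generated by the $(k,0)_T$ and $(0,k)_T$ are clean localized copies of the annulus. Passing to the associated graded form, the corrections drop out and the graded Gram matrix coincides with the nonsingular torus matrix, so the $\Sigma_{1,1}$ Gram determinant is a nonzero element of $Z'$ and $Tr'$ is nondegenerate. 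The delicate point I expect to require the most care is verifying that the reductions of the out-of-range terms $(p\pm r,q\pm s)_T$ back into the basis range, which already forced the four-by-four block computation on the torus, continue to interact with the $\epsilon$-corrections only through strictly lower weight, so that the leading behaviour is genuinely governed by the torus matrix.
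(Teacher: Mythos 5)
Your two-layer architecture is sound and is genuinely different from the paper's proof. The factorization $Tr=\tau\circ Tr'$ through the fraction field $Z'$ of the center is just transitivity of traces in the tower $S^{-1}\chi(\Sigma_{1,1})\subset Z'\subset S^{-1}K_N(\Sigma_{1,1})$, your combination step (using that $Z'$ is a field and $Tr'$ is $Z'$-linear) is correct, and the identification of $Tr'$ as extraction of the $(0,0)_T$-coefficient can even be recovered from the paper's trace theorem together with nondegeneracy of $\tau$ on the field $Z'$. For comparison, the paper avoids any determinant: it notes that $T_k(\delta)$, $(p,0)_T$ and $(0,q)_T$ are units in $S^{-1}K_N(\Sigma_{1,1})$ (they come from embedded annuli whose localized skein algebras are fields), multiplies a generator of a putative principal ideal in $\ker Tr$ on the left and right by suitable inverses, and exhibits an element of the ideal of the form $T_0(\delta)\ast(0,0)_T+\sum_{(k,p,q)\neq(0,0,0)}\beta_{k,p,q}T_k(\delta)\ast(p,0)_T\ast(0,q)_T$, whose trace is visibly nonzero.

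The genuine gap is in your middle layer. The claim that ``each $\epsilon$ contributes strictly lower weight than the principal terms,'' so that ``the corrections drop out and the graded Gram matrix coincides with the torus matrix,'' is false entry-wise. The weight bound $|p|+|q|+|r|+|s|-4$ on $\epsilon$ beats only the \emph{first} principal term $(p+r,q+s)_T$; but $Tr'$ kills that term except when $N\mid p+r$ and $N\mid q+s$. What survives in a typical entry is the second principal term, of small weight --- on a diagonal entry it is $(0,0)_T=2$, of weight zero --- whereas the part of $\epsilon$ surviving $Tr'$ consists of central skeins $\eta^k(Na,Nb)_T$ whose weight is only bounded by $p+q+r+s-4$, which can be as large as $2N$ (e.g.\ on diagonal entries with $p+q\geq N+2$). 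So the entry-wise associated graded of the Gram matrix need not be the torus matrix, and the leading-term argument as stated collapses. The repair has to be made at the level of the determinant rather than the entries: every Gram entry has weight at most $p+q+r+s$, and this bound is attained by a \emph{surviving} term only through $(p+r,q+s)_T$ with $N\mid p+r$, $N\mid q+s$; hence in the expansion of $\det$ the unique permutation attaining the maximal total weight $2N^2(N-1)$ is $(p,q)\mapsto(N-p \bmod N,\,N-q \bmod N)$, whose entries are $(-1)^{p+q}(N,N)_T$, $(N,0)_T$, $(0,N)_T$, $2(0,0)_T$ plus strictly lower-weight terms, and whose product therefore has a single nonzero top-weight term. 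With that replacement (or by substituting the paper's units argument for this layer) your proof goes through.
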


\proof Just as in the case of $S^{-1}K_N(\Sigma_{1,0})$ except work with
basis $T_k*(\delta)(p,0)_T*(0,q)_T$.  Show that any nontrivial principal ideal contains $T_0(\delta)*(0,0)_T+\sum_{k\neq 0,p,q}\beta_{k,p,q}T_k(\delta)*(p,0)_T*(0,q)_T$ whose trace is obviously nonzero. \qed

The fundamental group of $\Sigma_{1,1}$ is the free group on two generators, it's character variety is $\mathbb{C}^3$ where the coordinates can be taken as the negative of the traces of the matrices that $(1,0)_T$, $(0,1)_T$ and $(1,1)_T$ are sent to.  We have not computed the determinant of the trace pairing, so we can't say exactly what the locus is where the algebras fail to be Frobenius is, but the determinant of the pairing is a nonzero element of the function field of the character variety and the divisor of zeroes and poles is a proper subvariety away from which $K_N(\Sigma_{1,1})_{\phi}$ is Frobenius.  The zeroes correspond to points where the pairing is degenerate and the poles correspond to points where the global  basis doesn't actually span the specialization of the algebra.

\begin{theorem} Away from a proper subvariety $C$ of the character variety, $K_N(\Sigma_{1,1})_{\phi}$ is Frobenius. \end{theorem}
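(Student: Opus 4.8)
The plan is to reduce the statement to the behaviour of a single determinant under specialization, following the template already used for $\Sigma_{0,2}$, $\Sigma_{0,3}$, and $\Sigma_{1,0}$. First I would fix the global basis $\mathcal{V}=\{T_k(\delta)(p,q)_T: 0\le k,p,q\le N-1\}$ of $S^{-1}K_N(\Sigma_{1,1})$ furnished by the earlier theorem, and form the $N^3\times N^3$ Gram matrix $G$ of the trace pairing $\sigma(\alpha,\beta)=Tr(\alpha\beta)$ with respect to $\mathcal{V}$. Expanding a product of two basis elements back into $\mathcal{V}$ (via the product-to-sum formula and relations such as the one clearing the denominator in Equation \ref{quad}) and then applying the rule that $Tr$ deletes every term not divisible by $N$ in all three of $k,p,q$, each entry of $G$ lands in $S^{-1}\chi(\Sigma_{1,1})$, i.e. is a rational function on the character variety $X\cong\mathbb{C}^3$ of $\pi_1(\Sigma_{1,1})$. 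Since the previous theorem established that $S^{-1}K_N(\Sigma_{1,1})$ is Frobenius, the pairing is nondegenerate over the function field, so the determinant $D=\det G$ is a \emph{nonzero} element of $S^{-1}\chi(\Sigma_{1,1})$.

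Next I would isolate the bad locus $C$. Write $D=P/Q$ with $P,Q\in\chi(\Sigma_{1,1})$ and $Q\neq 0$. The zeros of $P$ are the places at which the specialized pairing degenerates, while the zeros of $Q$ are the ``poles'' at which the global basis fails to reduce to a basis of the specialization, arising from the denominators introduced when products are rewritten in terms of $\mathcal{V}$. To control spanning uniformly, note that $K_N(\Sigma_{1,1})$ is finitely generated over $\chi(\Sigma_{1,1})$ and that $S^{-1}\mathcal{V}$ spans $S^{-1}K_N(\Sigma_{1,1})$; hence the cokernel of the inclusion of the $\chi(\Sigma_{1,1})$-span of $\mathcal{V}$ is a finitely generated torsion module, annihilated by some nonzero $g\in\chi(\Sigma_{1,1})$. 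I would then set $C=V(P)\cup V(Q)\cup V(g)$. Because $X\cong\mathbb{C}^3$ is irreducible and $P,Q,g$ are all nonzero, $C$ is a proper Zariski-closed subvariety, consistent with the ``divisor of zeroes and poles'' described above.

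Then I would run the specialization argument for $\phi\notin C$. Since $\phi(g)\neq 0$, the annihilating relation forces the cokernel to vanish after tensoring with $\mathbb{C}_\phi$, so the reductions of $\mathcal{V}$ span $K_N(\Sigma_{1,1})_\phi=K_N(\Sigma_{1,1})\otimes_{\chi}\mathbb{C}_\phi$. Combining this with upper semicontinuity of fiber dimension, whose generic value equals the function-field dimension $N^3$, I conclude $\dim_{\mathbb{C}}K_N(\Sigma_{1,1})_\phi=N^3$ and that the reduced $\mathcal{V}$ is a genuine basis. Multiplication and the normalized trace are both compatible with the quotient by $\ker\phi$, so the Gram matrix of the specialized pairing is obtained from $G$ by evaluating its entries at $\phi$, which is legitimate because $\phi(Q)\neq 0$ keeps us away from the poles, and its determinant is $D(\phi)=P(\phi)/Q(\phi)\neq 0$. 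Hence the specialized symmetric trace pairing is nondegenerate, and $K_N(\Sigma_{1,1})_\phi$ is a symmetric Frobenius algebra.

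The hard part is not the zero locus of the pairing but the control of the poles: I must guarantee that at the chosen place the reduced global basis really is a basis of $K_N(\Sigma_{1,1})_\phi$, i.e. that the specialization has full dimension $N^3$ rather than collapsing. This is exactly why $C$ must absorb the pole locus $V(Q)$ and the annihilator $V(g)$ of the torsion cokernel, and it is the step where one genuinely invokes irreducibility of $X$ and semicontinuity rather than a direct determinant bound. An explicit description of $C$ would require computing $D$ in closed form, which is avoided here; the argument only needs that $D$ is a well-defined nonzero rational function, so it suffices to know the pairing is generically nondegenerate, which the earlier Frobenius theorem already supplies.
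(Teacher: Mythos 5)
Your proposal is correct and takes essentially the same approach as the paper, whose own argument (given in the paragraph preceding the theorem, the proof itself being left empty) is exactly that the determinant of the trace pairing with respect to the global basis $T_k(\delta)(p,q)_T$ is a nonzero element of the function field of the character variety $\mathbb{C}^3$, with its zero locus accounting for degeneracy of the specialized pairing and its pole locus accounting for failure of the global basis to span the specialization. Your version supplies more rigor than the paper at the one genuinely delicate point --- the torsion-cokernel annihilator $g$ and the generic-freeness/semicontinuity argument showing the reduced basis really is a basis of $K_N(\Sigma_{1,1})_\phi$ away from a proper subvariety --- which the paper only asserts implicitly.
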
 \qed

\bibliographystyle{amsplain}

\end{document}